\documentclass[11pt, letterpaper, oneside, reqno]{amsart}

\headheight=8pt     \topmargin=0pt \textheight=624pt
\textwidth=432pt \oddsidemargin=18pt \evensidemargin=18pt

\usepackage[T1]{fontenc}
\usepackage[utf8]{inputenc}

\usepackage{latexsym, amsmath, amssymb, amsfonts, amscd,bm}
\usepackage{mathtools}
\usepackage{amsthm}
\usepackage{t1enc}
\usepackage[mathscr]{eucal}
\usepackage{indentfirst}
\usepackage{graphicx, pb-diagram}
\usepackage{fancyhdr}
\usepackage{fancybox}
\usepackage{enumitem}
\usepackage{xcolor}
\usepackage[all]{xy}
\usepackage{tikz-cd}
\usepackage{bbm}
\usepackage{blkarray}
\usepackage{url}
\usepackage[colorlinks = true, linkcolor = blue, urlcolor = black, citecolor = blue, anchorcolor = blue]{hyperref}
\usepackage{float}
\usepackage{bm}
\usepackage[makeroom]{cancel}
\usepackage{accents}
\usepackage{cases}

\theoremstyle{plain}
\newtheorem{thm}{Theorem}[section]

\newtheorem{prop}[thm]{Proposition}

\newtheorem*{main}{Main Theorem}
\newtheorem*{crr}{Corollary}
\newtheorem{prop/def}[thm]{Proposition/Definition}

\newtheorem{lemma}[thm]{Lemma}
\newtheorem{cor}[thm]{Corollary}

\newtheoremstyle{underline}
{}        
{}              
{}              
{}    
{\large}              
{:}             
{1mm}         
{{\underline{\thmname{#1}\thmnumber{ #2}}}}  

\theoremstyle{underline}
\newtheorem*{claim*}{Claim}

\theoremstyle{definition}
\newtheorem{defi}[thm]{Definition}

\theoremstyle{remark}
\newtheorem{remark}[thm]{Remark}
\newtheorem{ex}[thm]{Example}

\newtheorem*{ack}{Acknowledgements}


\newcommand*{\dt}[1]{%
	\accentset{\mbox{\large\bfseries .}}{#1}}



\definecolor{forest}{rgb}{0,0.5,0}


\title{A rigidity result for coisotropic submanifolds in contact geometry}

\author{Stephane Geudens}
\address{{\scriptsize Department of Mathematics, University College London, 25 Gordon St, London WC1H0AY, UK}}
\email{\href{mailto:s.geudens@ucl.ac.uk}{\underline{\smash{s.geudens@ucl.ac.uk}}}}

\author{Alfonso G. Tortorella}
\address{{\scriptsize DipMat, Universit\`{a} degli Studi di Salerno, Via Giovanni Paolo II n.132, 84084  Fisciano, Italy}}
\email{\href{mailto:atortorella@unisa.it}{\underline{\smash{atortorella@unisa.it}}}}

\begin{document}

	\begin{abstract}
	We study coisotropic deformations of a compact regular coisotropic submanifold $C$ in a contact manifold $(M,\xi)$. Our main result states that $C$ is rigid among nearby coisotropic submanifolds whose characteristic foliation is diffeomorphic to that of $C$. When combined with a classical rigidity result for foliations, this yields conditions under which $C$ is rigid among all nearby coisotropic submanifolds.	
	\end{abstract}
	
	\maketitle
	
	\setcounter{tocdepth}{1} 
	\tableofcontents

	\section*{Introduction}

	This paper is devoted to deformations of a compact coisotropic submanifold $C$ in a contact manifold $(M,\xi)$. We will additionally assume that the coisotropic submanifold $C$ is generic, in the sense that $C$ is transverse to the hyperplane distribution $\xi$. Such coisotropic submanifolds are called regular. The coisotropic deformation problem of a regular coisotropic submanifold $C\subset(M,\xi)$ is known to be quite involved. First, it is highly non-linear, because deformations are given by Maurer-Cartan elements of a suitable $L_{\infty}$-algebra \cite{jacobi}. Second, the deformation problem is obstructed \cite{rigidity}, meaning that there may exist first order deformations which are not tangent to any path of deformations. All of this is in sharp contrast with the deformation problem of a compact Legendrian submanifold $L\subset(M,\xi)$. The latter is linear hence unobstructed, and moreover compact Legendrians are rigid, in the sense that smooth Legendrian deformations are generated by contact isotopies.

	A regular coisotropic submanifold $C\subset(M,\xi)$ carries a characteristic foliation $\mathcal{F}$. Its properties, especially its instability, seem to play an important role in the coisotropic deformation problem of $C$, as demonstrated by the following results:
	\begin{itemize}
		\item In \cite[\S 3.1]{rigidity} the second author constructed an obstructed example, featuring a compact regular coisotropic submanifold $C$ which has arbitrarily $\mathcal{C}^{1}$-small deformations whose characteristic foliation is not diffeomorphic to that of $C$.
		\item A compact regular coisotropic submanifold $C$ is called integral if its characteristic foliation $\mathcal{F}$ is simple, i.e. given by the fibers of a fiber bundle $C\rightarrow C/\mathcal{F}$. When deformed within the class of integral coisotropic submanifolds, $C$ is rigid under contact isotopies, as proved by the second author \cite{rigidity}. In particular, this deformation problem is unobstructed. 
	\end{itemize}  
	
	These results are our motivation for considering a restricted version of the coisotropic deformation problem of $C$. Instead of allowing all $\mathcal{C}^{1}$-small coisotropic deformations of $C$, we will only consider those whose characteristic foliation is diffeomorphic to that of $C$. In other words, we deform $C$ within the class
	\[
	\text{Def}_{\mathcal{F}}(C):=\big\{C'\subset(M,\xi)\ \text{regular coisotropic}:\ \exists\ \text{foliated diffeomorphism}\ (C,\mathcal{F})\overset{\sim}{\rightarrow}(C',\mathcal{F}')\big\}.
	\] 
	
	\noindent
	Our main result (see Thm.~\ref{thm:main}) states that $C$ is rigid, when deformed inside $\text{Def}_{\mathcal{F}}(C)$.

	\begin{main}
	Let $C\subset(M,\xi)$ be a compact regular coisotropic submanifold, and let $C_t$ be a smooth path in $\text{Def}_{\mathcal{F}}(C)$ with $C_0=C$.
	Then there exists an isotopy of contactomorphisms $\psi_t\colon(M,\xi)\to(M,\xi)$, locally defined around $C$, such that $C_t=\psi_t(C)$ for small enough $t$.
	\end{main}
	
	\noindent
	Clearly, for a path of regular coisotropic submanifolds to be generated by a contact isotopy, the diffeomorphism type of the characteristic foliation needs to be constant along that path. Our result shows that constancy of the foliation is in fact the only obstruction.
	When the characteristic foliation $\mathcal{F}$ is simple, our theorem reduces to the rigidity result for integral coisotropic submanifolds from \cite{rigidity} mentioned above. 
	
	\vspace{0.2cm}
	
	Our Main Theorem is a rigidity result for the \emph{restricted} deformation problem of $C$ inside $\text{Def}_{\mathcal{F}}(C)$. It can be combined with a well-known rigidity theorem for foliations to obtain a rigidity result for the classical coisotropic deformation problem, which allows \emph{all} coisotropic deformations of $C$ (see Cor.~\ref{cor:hamilton}). The rigidity result for foliations we are referring to -- which appeared in various versions \cite{hamilton,epstein,rui} -- concerns Hausdorff foliations, i.e. those $\mathcal{F}$ for which the leaf space $C/\mathcal{F}$ is Hausdorff when endowed with the quotient topology. Such a foliation has a generic leaf $L$, to which all leaves in an open dense subset are diffeomorphic. 
	
	\begin{crr}
	Let $C\subset(M,\xi)$ be a compact, connected regular coisotropic submanifold with characteristic foliation $\mathcal{F}$. Assume that $\mathcal{F}$ is Hausdorff and that its generic leaf $L$ satisfies $H^{1}(L)=0$. If $C_t$ is a smooth path of coisotropic submanifolds with $C_0=C$, then there is an isotopy $\psi_t$ of locally defined contactomorphisms such that $C_t=\psi_t(C)$ for small $t$.
	\end{crr}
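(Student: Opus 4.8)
The plan is to deduce the Corollary from the Main Theorem by showing that, for small $t$, an \emph{arbitrary} smooth path $C_t$ of coisotropic submanifolds through $C$ automatically lies in $\text{Def}_{\mathcal{F}}(C)$. Once this is established, the Main Theorem applies verbatim and produces the desired isotopy of locally defined contactomorphisms $\psi_t$ with $C_t=\psi_t(C)$. Thus the entire content of the Corollary is concentrated in the claim that the characteristic foliation cannot change its diffeomorphism type under the stated hypotheses on $\mathcal{F}$ and its generic leaf $L$; this is exactly where the cited rigidity theorem for foliations enters.

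First I would trivialize the family. The smooth path $C_t$ is parametrized by a smooth family of embeddings $\phi_t\colon C\to M$ with $\phi_0$ the inclusion and image $\phi_t(C)=C_t$. Since regularity is an open condition, each $C_t$ is a regular coisotropic submanifold for $t$ small, hence carries a characteristic foliation $\mathcal{F}_t$. Pulling back along $\phi_t$ yields a family of foliations $\tilde{\mathcal{F}}_t:=\phi_t^{*}\mathcal{F}_t$ on the \emph{fixed} manifold $C$, with $\tilde{\mathcal{F}}_0=\mathcal{F}$. The point to verify here is that $\tilde{\mathcal{F}}_t$ is a \emph{smooth} deformation of $\mathcal{F}$ through foliations: the characteristic distribution $T\mathcal{F}_t\subset TC_t$ depends smoothly on $t$, being cut out by the fixed contact distribution $\xi$ along the smoothly varying submanifolds $C_t$, so that $T\tilde{\mathcal{F}}_t=(d\phi_t)^{-1}(T\mathcal{F}_t)$ is a smooth path of involutive subbundles of $TC$.

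Next I would invoke the rigidity theorem for foliations. By hypothesis $\mathcal{F}=\tilde{\mathcal{F}}_0$ is Hausdorff, its generic leaf $L$ satisfies $H^{1}(L)=0$, and $C$ is compact and connected; the results of \cite{hamilton,epstein,rui} then apply to the smooth deformation $\tilde{\mathcal{F}}_t$. They provide, for each small $t$, a diffeomorphism $g_t\colon C\to C$ with $(g_t)_{*}\mathcal{F}=\tilde{\mathcal{F}}_t$, that is, a foliated diffeomorphism $(C,\mathcal{F})\overset{\sim}{\rightarrow}(C,\tilde{\mathcal{F}}_t)$; existence for each $t$ is all we need, since the path $C_t$ is already smooth by hypothesis. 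Composing with $\phi_t$, which is foliated from $(C,\tilde{\mathcal{F}}_t)$ to $(C_t,\mathcal{F}_t)$ by construction, produces a foliated diffeomorphism $\phi_t\circ g_t\colon(C,\mathcal{F})\overset{\sim}{\rightarrow}(C_t,\mathcal{F}_t)$. Therefore $C_t\in\text{Def}_{\mathcal{F}}(C)$ for small $t$, and the Main Theorem applied to the path $C_t$ yields the isotopy $\psi_t$ with $C_t=\psi_t(C)$.

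The main obstacle I anticipate is matching the precise input and output of the cited foliation rigidity theorem to the present situation. On the input side, one must confirm that $\tilde{\mathcal{F}}_t$ genuinely meets the smoothness and Hausdorffness assumptions of \cite{hamilton,epstein,rui}; on the output side, one must ensure that the conclusion is strong enough to furnish an honest \emph{foliated} diffeomorphism $g_t$ (not merely an abstract identification of leaf spaces), so that membership in $\text{Def}_{\mathcal{F}}(C)$ is literally verified. Here the hypotheses play clearly identifiable roles: connectedness of $C$ guarantees a well-defined generic leaf, while $H^{1}(L)=0$ is the (Reeb-type) stability input that rules out nontrivial infinitesimal deformations of $\mathcal{F}$ near its leaves and thereby forces the deformed foliations to remain in the diffeomorphism class of $\mathcal{F}$. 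These are precisely the conditions under which the restricted deformation problem governed by the Main Theorem coincides with the unrestricted one.
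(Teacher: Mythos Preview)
Your overall strategy is exactly the one the paper uses: reduce to the Main Theorem by showing that under the stated hypotheses the path $C_t$ is automatically a smooth path in $\text{Def}_{\mathcal{F}}(C)$, by pulling the characteristic foliations back to a smooth family $\tilde{\mathcal{F}}_t$ on $C$ and applying the foliation rigidity theorem. The paper's proof passes through the local model $(U,\ker\gamma_G)$ and Lemma~\ref{lem:diffeos}, but this is only cosmetically different from your direct use of the embeddings $\phi_t$.

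There is, however, one genuine slip. You write that ``existence for each $t$ is all we need, since the path $C_t$ is already smooth by hypothesis''. This is not right: the Main Theorem is stated for \emph{smooth} paths in $\text{Def}_{\mathcal{F}}(C)$, and by Definition~\ref{def:smoothness} this means there must exist a \emph{smooth} path of embeddings $\Phi_t$ that are simultaneously foliated diffeomorphisms $(C,\mathcal{F})\overset{\sim}{\to}(C_t,\mathcal{F}_t)$. If your $g_t$ are produced one $t$ at a time with no control in $t$, then $\phi_t\circ g_t$ need not be smooth in $t$, and the hypotheses of the Main Theorem are not met. Looking into the proof of the Main Theorem confirms that the smooth dependence is used essentially: Prop.~\ref{prop:isotopy} and Lemma~\ref{lem:compp} work with the time-dependent vector field generating the isotopy of foliated diffeomorphisms. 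The fix is exactly what the paper does: invoke specifically the \emph{parametric} rigidity result of Del Hoyo--Fernandes \cite{rui}, which takes as input a smooth one-parameter family $\tilde{\mathcal{F}}_t$ and outputs a smooth isotopy $g_t$ of foliated diffeomorphisms. With that correction your argument is complete and coincides with the paper's.
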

		
	\bigskip
	\textbf{Overview of the paper.}
	
	In Section~\ref{sec:One}, we collect some background information concerning coisotropic submanifolds in contact geometry. We recall that first order deformations of a regular coisotropic submanifold $C\subset(M,\xi)$, modulo those induced by contact isotopies, are given by the cohomology group $H^{1}(\mathcal{F},\ell)$. Here $\ell:=TC/(TC\cap\xi)$ is the quotient line bundle on $C$, which carries a canonical $T\mathcal{F}$-representation $\nabla^{\ell}$. With the aim of this note in mind, we also recall that first order deformations of the foliation $\mathcal{F}$, modulo those induced by isotopies, are given by the first foliated cohomology group with values in the normal bundle $H^{1}(\mathcal{F},N\mathcal{F})$.
	
	\vspace{0.2cm}
	
	In Sections~\ref{sec:two} and \ref{sec:three}, we study the deformation problem of $C$ inside $\text{Def}_{\mathcal{F}}(C)$ at the infinitesimal level.
	We first argue what are the appropriate first order deformations when deforming $C$ inside $\text{Def}_{\mathcal{F}}(C)$. By the above, these should be closed elements in $\Omega^{1}(\mathcal{F},\ell)$ which in some way give rise to a trivial class in $H^{1}(\mathcal{F},N\mathcal{F})$. To achieve this assignment, we construct a non-canonical chain map (see Def.~\ref{def:Phi} and Thm.~\ref{theor:Phi_cochain_map})
	\[
	\Phi_G\colon\big(\Omega^\bullet(\mathcal{F},\ell),d_{\nabla^\ell}\big)\longrightarrow\big(\Omega^\bullet(\mathcal{F},N\mathcal{F}),-d_{\nabla^{N\mathcal{F}}}\big),
	\]	
	which depends on a choice of complement $G$ to $T\mathcal{F}\subset TC$. We then show that first order deformations of $C$ are closed elements in $\Omega^{1}(\mathcal{F},\ell)$ whose image under $\Phi_G$ is exact (see Thm.~\ref{thm:first-order}). It turns out that the map induced in cohomology $[\Phi]:H^{1}(\mathcal{F},\ell)\rightarrow H^{1}(\mathcal{F},N\mathcal{F})$ is canonical (see Thm.~\ref{theor:Phi_cohomology}) and injective (see Prop.~\ref{prop:injective}), hence first order deformations of $C\subset\text{Def}_{\mathcal{F}}(C)$ are in fact trivial in $H^{1}(\mathcal{F},\ell)$. This means exactly that $C$ is infinitesimally rigid under contact isotopies, when deformed inside $\text{Def}_{\mathcal{F}}(C)$. In particular, this deformation problem is unobstructed (see Cor.~\ref{cor:unob}). This is the contact analog of an unobstructedness result for coisotropic submanifolds in symplectic geometry due to the first author \cite{symplectic}.
	
	\vspace{0.2cm}
	
	The infinitesimal rigidity result proved in Section~\ref{sec:three} raises the question whether $C$ is actually rigid inside $\text{Def}_{\mathcal{F}}(C)$. We show in Section~\ref{sec:rigidity} that this is indeed the case, by establishing our Main Theorem mentioned above. The proof relies on an extension of Gray stability for contact structures to the case of pre-contact structures (see Prop.~\ref{prop:isotopy}) and a uniqueness result concerning coisotropic embeddings in contact manifolds (see Prop.~\ref{prop:coisotropic_embedding}). In Section~\ref{sec:five}, we prove the above Corollary concerning coisotropic submanfolds with Hausdorff foliation.
	
	\vspace{0.2cm}
	
	Throughout the paper, we make frequent use of the fact that (pre-)contact structures can be viewed as (pre-)symplectic forms on a certain Lie algebroid, called Atiyah algebroid \cite{luca}. This point of view allows one to study (pre-)contact structures using (pre-)symplectic geometry. We recall the details of this correspondence in the Appendix.

	\begin{ack}
	S.G. acknowledges support from the UCL Institute for Mathematics \& Statistical Science (IMSS). A.G.T.~is a member of the National Group for Algebraic and Geometric Structures, and their Applications (GNSAGA - INdAM).
	\end{ack}

	\section{Background and setup}\label{sec:One}

	\subsection{Coisotropic submanifolds in contact geometry}\label{sec:one}
	\vspace{0.1cm}
	\noindent
	
	We recall some background information about contact manifolds and their coisotropic submanifolds, following \cite[\S 5]{jacobi}. Let $M$ be a manifold with a hyperplane distribution $\xi$. Denoting the quotient line bundle by $L:=TM/\xi$, the distribution $\xi$ is the kernel of the \textbf{structure form} $\theta\in\Gamma(T^{*}M\otimes L)$ of $(M,\xi)$, given by
	\[
	\theta:TM\rightarrow L: X\mapsto X\ \text{mod}\ \xi.
	\] 
	The \textbf{curvature form} $\omega\in\Gamma(\wedge^{2}\xi^{*}\otimes L)$ of $(M,\xi)$ is defined by
	\[
	\omega(X,Y)=\theta([X,Y]),\hspace{1cm}X,Y\in\Gamma(\xi),
	\]
	and we denote the associated vector bundle morphism by
	\[
	\omega^{\flat}:\xi\rightarrow \xi^{*}\otimes L:X\mapsto \omega(X,-).
	\]
	The characteristic distribution of $(M,\xi)$ is given by $\ker\omega^{\flat}$. It is an involutive distribution which is singular in general.
	
	\begin{defi}
		We call $(M,\xi)$ a \textbf{pre-contact} manifold if the characteristic distribution $\ker\omega^{\flat}$ has constant rank. In that case, we refer to $\text{rk}(\ker\omega^{\flat})$ as the \textbf{rank} of $(M,\xi)$. The foliation $\mathcal{F}$ integrating $\ker\omega^{\flat}$ is the \textbf{characteristic foliation} of $(M,\xi)$.
	\end{defi}
	
	The curvature form $\omega$ of $(M,\xi)$ vanishes exactly when $\xi$ is integrable. Consequently, we say that $\xi$ is maximally non-integrable if $\omega$ is non-degenerate, i.e. $\ker\omega^{\flat}$ is trivial.
	
	\begin{defi}
		We call $(M,\xi)$ a \textbf{contact} manifold if $\xi$ is maximally non-integrable.
	\end{defi}
	
	
	Let $C$ be a submanifold of a contact manifold $(M,\xi)$. The intersection $\xi_C:=TC\cap\xi$ is a generically singular distribution on $C$ -- if $C$ is transverse to $\xi$, then $\xi_C$ is a hyperplane distribution on $C$. Denote by $\xi_{C}^{\perp_{\omega}}$ the bundle (possibly of varying rank) of $\omega$-orthogonals of $\xi_C$ in $(\xi,\omega)$, i.e.
	\[
	\xi_{C}^{\perp_{\omega}}=\{v\in\xi:\ \omega(v,w)=0\ \ \forall w\in\xi_C\}.
	\] 
	

	\begin{defi}
		A submanifold $C$ of a contact manifold $(M,\xi)$ is \textbf{coisotropic} if $(\xi_C)_{p}$ is a coisotropic subspace of $(\xi_p,\omega_p)$ for all $p\in C$, i.e. $\xi_{C}^{\perp_{\omega}}\subset\xi_C$. We call $C$ \textbf{regular coisotropic} if $C$ is coisotropic and transverse to $\xi$.
	\end{defi}
	
	In this note, we focus on regular coisotropic submanifolds.
	
	\begin{remark}\label{rem:precontact}
		A submanifold $C^{k+1}\subset(M^{2n+1},\xi)$ is regular coisotropic exactly when $\xi_C$ is a pre-contact structure of rank $2n-k$. Indeed, because $\ker(\omega|_{\xi_C}^{\flat})=\xi_C\cap\xi_{C}^{\perp_{\omega}}$, we have the following equivalences:
		\begin{align*}
			&C^{k+1}\subset(M^{2n+1},\xi)\ \text{is regular coisotropic}\\ &\hspace{3cm}\Leftrightarrow \dim(\xi_C)_{p}=k\ \text{and}\   \big(\xi_C\cap\xi_{C}^{\perp_{\omega}}\big)_{p}=\big(\xi_{C}^{\perp_{\omega}}\big)_{p}\ \ \ \forall p\in C,\\
			&\hspace{3cm}\Leftrightarrow \dim(\xi_C)_{p}=k\ \text{and}\  \dim\big(\xi_C\cap\xi_{C}^{\perp_{\omega}}\big)_{p}=2n-k \ \ \ \forall p\in C,\\
			&\hspace{3cm}\Leftrightarrow \xi_C\ \text{is pre-contact of rank}\ 2n-k.
		\end{align*}
		Hence, a regular coisotropic submanifold $C^{k+1}\subset(M^{2n+1},\xi)$ carries a characteristic foliation $\mathcal{F}$, which integrates the distribution $\xi_{C}^{\perp_{\omega}}$ and has $(2n-k)$-dimensional leaves.
	\end{remark}

	We also mention an alternative characterization of regular coisotropic submanifolds, which will be important in Section \ref{sec:rigidity}. It is reminiscent of the definition of coisotropic submanifolds in symplectic geometry, relying on the fact that contact structures can be viewed as symplectic Atiyah forms. For the necessary background and notation concerning Atiyah forms, we refer to the Appendix. 
	
	\begin{prop}[\cite{rigidity}]\label{prop:coiso-atiyah}
		Let $(M,\xi)$ be a contact manifold with quotient line bundle $L$, and denote by $\Omega\in\Omega^{2}_{D}(L)$ its symplectic Atiyah form. Let $C\subset (M,\xi)$ be a submanifold transverse to $\xi$, with restricted line bundle $L_{C}:=L|_{C}$. Then the following are equivalent:
		\begin{enumerate}
			\item $C$ is a regular coisotropic submanifold of $(M,\xi)$,
			\item the pullback $i^{*}\Omega\in\Omega^{2}_{D}(L_{C})$ is a pre-symplectic Atiyah form with kernel $(DL_{C})^{\perp_{\Omega}}$. Here $(DL_{C})^{\perp_{\Omega}}$ denotes the $\Omega$-orthogonal of $DL_{C}$ inside $DL$.
		\end{enumerate}
	\end{prop}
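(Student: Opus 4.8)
The plan is to reduce the statement to pointwise symplectic linear algebra on the fibres of the Atiyah algebroid, and then to relate the $\Omega$-orthogonal of $DL_{C}$ to the $\omega$-orthogonal of $\xi_{C}$ by means of the symbol map. Throughout I use that, since $C$ is transverse to $\xi$, the inclusion $i\colon C\hookrightarrow M$ identifies $DL_{C}$ with the subbundle $\sigma^{-1}(TC)\subseteq DL|_{C}$, where $\sigma\colon DL\to TM$ is the symbol; correspondingly $i^{*}\Omega$ is the fibrewise restriction of $\Omega$ to $\wedge^{2}DL_{C}$ with values in $L_{C}=L|_{C}$. Fixing $p\in C$, I work in the symplectic vector space $(D_{p}L,\Omega_{p})$ together with its subspace $W:=D_{p}L_{C}=\sigma^{-1}(T_{p}C)$.

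First I record the linear-algebra skeleton. For any symplectic vector space $(V,\Omega)$ and subspace $W\subseteq V$ one has $\ker(\Omega|_{W})=W\cap W^{\perp_{\Omega}}$, and this kernel equals $W^{\perp_{\Omega}}$ precisely when $W$ is coisotropic, i.e. $W^{\perp_{\Omega}}\subseteq W$. Thus statement (2) --- that $i^{*}\Omega$ is pre-symplectic with kernel $(DL_{C})^{\perp_{\Omega}}$ --- is equivalent, fibrewise, to the coisotropy $(D_{p}L_{C})^{\perp_{\Omega}}\subseteq D_{p}L_{C}$ for every $p$, together with constancy of the rank of these kernels. It remains to match this Atiyah-level coisotropy with the defining condition $\xi_{C}^{\perp_{\omega}}\subseteq\xi_{C}$ of a regular coisotropic submanifold, and to supply the constant-rank statement.

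The key step is the comparison of the two orthogonals. I would choose a local contact form $\alpha$ near $p$, i.e. a trivialisation of $L$ with $\ker\alpha=\xi$; this gives a flat connection on $L$ and hence a splitting $DL\cong TM\oplus\mathbb{R}$ (the $\mathbb{R}$-summand being $\mathrm{End}(L)$). A short computation with the Atiyah differential, using the Appendix's formula for $\Omega$, then yields
\[
\Omega\big((X_{1},f_{1}),(X_{2},f_{2})\big)=d\alpha(X_{1},X_{2})+f_{1}\,\alpha(X_{2})-f_{2}\,\alpha(X_{1}).
\]
Feeding in $(X_{2},f_{2})$ with $X_{2}\in T_{p}C$, and using transversality so that $T_{p}C=\xi_{C}\oplus\mathbb{R}w$ with $\alpha(w)\neq 0$, one finds that $(X_{1},f_{1})\in(D_{p}L_{C})^{\perp_{\Omega}}$ forces successively $X_{1}\in\xi$, then $X_{1}\in\xi_{C}^{\perp_{\omega}}$, and finally $f_{1}=-\,d\alpha(X_{1},w)/\alpha(w)$; conversely every such pair lies in $(D_{p}L_{C})^{\perp_{\Omega}}$. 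Hence $\sigma$ restricts to an isomorphism $\sigma\colon(D_{p}L_{C})^{\perp_{\Omega}}\xrightarrow{\ \sim\ }(\xi_{C})_{p}^{\perp_{\omega}}$ with image contained in $\xi$. The two coisotropy conditions are then equivalent: if $(DL_{C})^{\perp_{\Omega}}\subseteq DL_{C}$, applying $\sigma$ gives $\xi_{C}^{\perp_{\omega}}\subseteq TC\cap\xi=\xi_{C}$; conversely $\xi_{C}^{\perp_{\omega}}\subseteq\xi_{C}\subseteq TC$ forces every $(X_{1},f_{1})\in(DL_{C})^{\perp_{\Omega}}$ to have $X_{1}\in TC$, hence to lie in $DL_{C}$.

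Finally I would assemble the chain $C$ regular coisotropic $\Leftrightarrow$ $\xi_{C}^{\perp_{\omega}}\subseteq\xi_{C}$ $\Leftrightarrow$ $(DL_{C})^{\perp_{\Omega}}\subseteq DL_{C}$ $\Leftrightarrow$ $\ker(i^{*}\Omega)=(DL_{C})^{\perp_{\Omega}}$. Constancy of rank is automatic in the coisotropic case: by Remark~\ref{rem:precontact} a regular coisotropic $C^{k+1}\subset(M^{2n+1},\xi)$ has $\xi_{C}$ pre-contact of rank $2n-k$, so $\xi_{C}^{\perp_{\omega}}$, and hence by the isomorphism above $(DL_{C})^{\perp_{\Omega}}=\ker(i^{*}\Omega)$, has constant rank $2n-k$, exhibiting $i^{*}\Omega$ as a genuine pre-symplectic Atiyah form. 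I expect the main obstacle to be the third step: controlling the $\mathrm{End}(L)$-component $f_{1}$ and proving that $\sigma$ \emph{identifies} $(DL_{C})^{\perp_{\Omega}}$ with $\xi_{C}^{\perp_{\omega}}$ rather than merely relating them. The local computation makes this transparent, but one should check that the identification is independent of the auxiliary contact form $\alpha$, or argue it intrinsically from the description of $\Omega$ in the Appendix.
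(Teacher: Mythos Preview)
The paper does not give its own proof of this proposition; it is quoted verbatim from \cite{rigidity} and left unproved. So there is nothing to compare against at the level of argument.

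Your proposal is correct and would constitute a complete proof. The local formula $\Omega\big((X_{1},f_{1}),(X_{2},f_{2})\big)=d\alpha(X_{1},X_{2})+f_{1}\alpha(X_{2})-f_{2}\alpha(X_{1})$ is exactly what one gets from $\Omega=d_{D}(\sigma^{*}\theta)$ in the trivialisation, and your computation of $(D_{p}L_{C})^{\perp_{\Omega}}$ by successively testing against $(0,f_{2})$, $(X_{2},0)$ with $X_{2}\in\xi_{C}$, and $(w,0)$ is the right way to extract the isomorphism $\sigma\colon(D_{p}L_{C})^{\perp_{\Omega}}\xrightarrow{\sim}(\xi_{C})_{p}^{\perp_{\omega}}$. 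Two small gaps you should close when writing it up: first, the definition of a pre-symplectic Atiyah form (Def.~\ref{def:pres-atiyah}) also requires $\iota_{\mathbbm{1}}(i^{*}\Omega)$ to be nowhere zero and $i^{*}\Omega$ to be $d_{D}$-closed; both follow immediately from transversality of $C$ to $\xi$ and the fact that $i^{*}$ is a chain map, but you should say so. Second, your worry about independence from $\alpha$ is not a genuine obstacle: both conditions (1) and (2) are intrinsic, so verifying their equivalence with any local choice of $\alpha$ suffices.
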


	A special case of \cite[Thm.~3]{contact-thickening} provides a normal form for a contact manifold $(M,\xi)$ around a regular coisotropic submanifold $C\subset(M,\xi)$, similar to Gotay's theorem in symplectic geometry \cite{gotay}. Before introducing the local model, we have to fix some notation. Let $\mathcal{F}$ be the characteristic foliation of the pre-contact manifold $(C,\xi_C)$. Denote by $\ell:=TC/\xi_C$ its line bundle, and by $\theta_C\in\Omega^{1}(C,\ell)$ its structure form. The local model lives on a neighborhood of the zero section of the vector bundle $p:T^{*}\mathcal{F}\otimes \ell\rightarrow C$. Its construction depends on a choice of complement $G$ to the characteristic distribution $T\mathcal{F}$, i.e. $TC=T\mathcal{F}\oplus G$. The model involves the following elements of $\Omega^{1}\big(T^{*}\mathcal{F}\otimes \ell,p^{*}\ell\big)$:
	\begin{enumerate}[label=\roman*)]
		\item We pull back the structure form $\theta_C\in\Omega^{1}(C,\ell)$ to $p^{*}\theta_C\in\Omega^{1}\big(T^{*}\mathcal{F}\otimes \ell,p^{*}\ell\big)$,
		\item We define $\theta_G\in\Omega^{1}\big(T^{*}\mathcal{F}\otimes \ell,p^{*}\ell\big)$ as follows. For $\alpha\in (T^{*}\mathcal{F}\otimes \ell)_{x}$ and $v\in T_{\alpha}(T^{*}\mathcal{F}\otimes \ell)$,
		\[
		(\theta_G)_{\alpha}(v)=\big\langle \alpha, \text{pr}_{T\mathcal{F}}(dp(v))\big\rangle.
		\] 
		Here $\text{pr}_{T\mathcal{F}}:TC\rightarrow T\mathcal{F}$ is the projection in the splitting $TC=T\mathcal{F}\oplus G$.
	\end{enumerate}
	\noindent
	It turns out that the kernel of
	\[
	\gamma_G:=p^{*}\theta_C+\theta_G\in\Omega^{1}\big(T^{*}\mathcal{F}\otimes \ell,p^{*}\ell\big)
	\]
	defines a contact structure on a neighborhood $U$ of $C\subset T^{*}\mathcal{F}\otimes \ell$. We call $(U,\ker\gamma_G)$ the \textbf{contact thickening} of $(C,\xi_C)$. This is the local model for $(M,\xi)$ around $C$.
	
	\begin{thm}\cite{contact-thickening}\label{thm:normalform}
		If $C\subset(M,\xi)$ is a regular coisotropic submanifold, then a neighborhood of $C$ in $M$ can be identified with the contact thickening $(U,\ker\gamma_G)$.
	\end{thm}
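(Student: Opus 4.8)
The plan is to prove this as a Gotay-type normal form theorem, using the dictionary between contact structures and symplectic Atiyah forms together with a Moser-type deformation argument. Concretely, I would view the contact structure $\xi = \ker\theta$ through its symplectic Atiyah form $\Omega \in \Omega^2_D(L)$ on the Atiyah algebroid $DL$, so that by Proposition~\ref{prop:coiso-atiyah} the regular coisotropic condition on $C$ becomes the statement that $i^*\Omega$ is a pre-symplectic Atiyah form with kernel $(DL_C)^{\perp_\Omega}$. This reframes the problem as a coisotropic normal form in (Atiyah-)symplectic geometry, where the classical template of Gotay's theorem applies.

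The first substantive step is to identify the normal geometry. Using the nondegenerate pairing supplied by $\Omega$, I would establish a canonical isomorphism $NC \cong T^*\mathcal{F} \otimes \ell$ between the normal bundle of $C$ and the total space on which the model is built; here the $\Omega$-orthogonal $(DL_C)^{\perp_\Omega}$ plays the role of the characteristic directions $\xi_C^{\perp_\omega}$, and the twist by $\ell \cong L_C$ reflects the $L$-valuedness of $\Omega$ (the identification $\ell = TC/\xi_C \cong TM|_C/\xi = L_C$ uses transversality of $C$ to $\xi$). The choice of complement $G$ with $TC = T\mathcal{F} \oplus G$ then lets me build an explicit tubular embedding $\phi \colon U \to M$ of a neighborhood of the zero section of $T^*\mathcal{F} \otimes \ell$, with $\phi|_C = \mathrm{id}$ and with differential along $C$ realizing the isomorphism above. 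Pulling $\Omega$ back along $\phi$ and comparing with the model Atiyah form $\Omega_G$ whose kernel is $\ker\gamma_G$, one checks that the two agree along $C$: their structure forms both restrict to $\theta_C$ (using $\ell \cong L_C$ and the vanishing of $\theta_G$ on the zero section), and the adapted embedding arranges that the full Atiyah forms coincide on $DL|_C$.

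With this first-order agreement in hand, I would run a Moser argument on the linear interpolation $\Omega_t = (1-t)\Omega_G + t\,\phi^*\Omega$. Since $\Omega_G$ and $\phi^*\Omega$ agree along $C$, each $\Omega_t$ is nondegenerate on a sufficiently small neighborhood of the zero section, while the difference $\phi^*\Omega - \Omega_G$ vanishes along $C$; a relative Poincaré lemma for Atiyah forms then produces a primitive $\beta$ with $\beta|_C = 0$ and $d\beta = \phi^*\Omega - \Omega_G$. Solving $\iota_{X_t}\Omega_t = -\beta$ for the time-dependent Atiyah vector field $X_t$ and integrating yields an isotopy $\psi_t$ of $U$ fixing $C$ with $\psi_1^*(\phi^*\Omega) = \Omega_G$, so that $\phi \circ \psi_1$ is a contactomorphism from $(U, \ker\gamma_G)$ onto a neighborhood of $C$ in $M$. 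I expect the main obstacle to be precisely this Moser step: establishing the relative Poincaré lemma in the Atiyah-form framework so that the primitive $\beta$ vanishes along $C$ (guaranteeing $X_t$ is tangent to $C$ and hence $\psi_t$ fixes $C$), while simultaneously controlling nondegeneracy of $\Omega_t$ on a uniform neighborhood. A secondary subtlety lies in the bookkeeping of the second step that makes $\phi^*\Omega$ and $\Omega_G$ agree along $C$ to first order rather than merely having equal structure forms — this is where the coisotropic condition and the complement $G$ are genuinely used.
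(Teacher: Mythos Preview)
The paper does not prove Theorem~\ref{thm:normalform}; it is quoted from \cite{contact-thickening} (as a special case of Theorem~3 there) and used as a black box throughout. So there is no ``paper's own proof'' to compare your proposal against.

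That said, your outline is the standard route and is in the spirit of the cited reference: view the contact structure through its symplectic Atiyah form, identify $NC\cong T^*\mathcal{F}\otimes\ell$ via the curvature $\omega$, and run a Moser argument in the der-complex. Your worry about the relative Poincar\'e lemma is legitimate but surmountable: the acyclicity of $(\Omega_D^\bullet(L),d_D)$ via the contracting homotopy $\iota_{\mathbbm{1}}$ (Remark~\ref{rem:contracting-homotopy}) is not by itself enough to produce a primitive vanishing along $C$, but the usual fibrewise retraction onto the zero section of $T^*\mathcal{F}\otimes\ell$ gives a relative version, exactly as in the classical Gotay argument. One small overstatement: you say the two Atiyah forms must agree ``to first order'' along $C$. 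In fact, agreement along $C$ as sections of $\wedge^2(DL)^*\otimes L|_C$ is what is needed; the genuinely delicate point is choosing the tubular embedding so that this full agreement (and not merely equality of the pullbacks $I^*\Omega$) holds, which is precisely where the complement $G$ and the coisotropic hypothesis enter. The paper itself deploys a very similar Moser argument in the der-complex in Proposition~\ref{prop:coisotropic_embedding}, so your approach is entirely consistent with the techniques of the paper.
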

	
	
	By Thm.~\ref{thm:normalform}, studying $\mathcal{C}^{1}$-small coisotropic deformations of a regular coisotropic submanifold $C\subset(M,\xi)$ amounts to studying sections of the contact thickening $(U,\ker\gamma_G)$ whose graph is coisotropic. Note here that small enough coisotropic deformations of $C$ are automatically regular. The following result describes regular coisotropic sections of $(U,\ker\gamma_G)$. 
	\begin{lemma}\label{lem:coiso-in-model}
		Let $C^{k+1}\subset(M^{2n+1},\xi)$ be a regular coisotropic submanifold with local model $(U,\ker\gamma_G)$. For any section $\eta\in\Gamma(U)$, the following are equivalent:
		\begin{enumerate}[label=\roman*)]
			\item $\text{Graph}(\eta)\subset(U,\ker\gamma_G)$ is regular coisotropic,
			\item $\ker\big(\theta_C+\operatorname{pr}_{T\mathcal{F}}^{*}\eta\big)$ is a pre-contact structure of rank $(2n-k)$ on $C$.
		\end{enumerate}
	\end{lemma}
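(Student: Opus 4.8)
The plan is to exploit that $\operatorname{Graph}(\eta)$ is the image of the embedding $\eta\colon C\to U$, so that everything can be transported back to $C$ itself. The key computation is to identify the structure form induced on $\operatorname{Graph}(\eta)$ by the contact structure $\ker\gamma_G$; concretely, I claim that $\eta^*\gamma_G=\theta_C+\operatorname{pr}_{T\mathcal{F}}^{*}\eta$, which is exactly the $\ell$-valued one-form appearing in (ii). Once this identity is in hand, the lemma follows by transporting Remark~\ref{rem:precontact} through the diffeomorphism $\eta\colon C\to\operatorname{Graph}(\eta)$.

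First I would compute $\eta^*\gamma_G=\eta^*p^*\theta_C+\eta^*\theta_G$ summand by summand. Since $p\circ\eta=\operatorname{id}_C$, the first term is $\eta^*p^*\theta_C=(p\circ\eta)^*\theta_C=\theta_C$; in particular the pulled-back coefficient bundle $\eta^*p^*\ell$ is canonically identified with $\ell$. For the second term, fix $x\in C$ and $w\in T_xC$, and use $dp\circ d\eta=d(p\circ\eta)=\operatorname{id}$ to get $dp(d\eta(w))=w$. By the defining formula for $\theta_G$ this gives $(\eta^*\theta_G)_x(w)=\langle\eta(x),\operatorname{pr}_{T\mathcal{F}}(w)\rangle=(\operatorname{pr}_{T\mathcal{F}}^{*}\eta)_x(w)$, where $\eta(x)\in(T^*\mathcal{F}\otimes\ell)_x$ is read as a foliated covector. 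Adding the two contributions yields the claimed identity $\eta^*\gamma_G=\theta_C+\operatorname{pr}_{T\mathcal{F}}^{*}\eta$.

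Next I would interpret this identity geometrically. Because $C$ is transverse to $\xi$, transversality is an open condition and the restricted line bundle identifies $\ell_{\operatorname{Graph}(\eta)}=T\operatorname{Graph}(\eta)/\big(T\operatorname{Graph}(\eta)\cap\ker\gamma_G\big)$ with $(p^*\ell)|_{\operatorname{Graph}(\eta)}$; under this identification, the structure form of the induced hyperplane distribution on $\operatorname{Graph}(\eta)$ is the restriction of $\gamma_G$. Pulling back along $\eta$, this induced structure form becomes $\theta_C+\operatorname{pr}_{T\mathcal{F}}^{*}\eta$, valued in $\ell$. Hence $\operatorname{Graph}(\eta)$ is transverse to $\ker\gamma_G$ precisely when $\theta_C+\operatorname{pr}_{T\mathcal{F}}^{*}\eta$ is nowhere vanishing, i.e. is a genuine structure form, and in that case the diffeomorphism $\eta\colon C\to\operatorname{Graph}(\eta)$ carries the induced distribution $T\operatorname{Graph}(\eta)\cap\ker\gamma_G$ isomorphically onto $\ker(\theta_C+\operatorname{pr}_{T\mathcal{F}}^{*}\eta)$. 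Since a diffeomorphism preserves the curvature form up to isomorphism, it also preserves the rank of the characteristic distribution.

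Finally I would conclude by applying Remark~\ref{rem:precontact} to the $(k+1)$-dimensional submanifold $\operatorname{Graph}(\eta)\subset U^{2n+1}$: it is regular coisotropic if and only if the induced distribution $T\operatorname{Graph}(\eta)\cap\ker\gamma_G$ is pre-contact of rank $2n-k$; transporting this through $\eta$, this holds if and only if $\ker(\theta_C+\operatorname{pr}_{T\mathcal{F}}^{*}\eta)$ is pre-contact of rank $2n-k$, which is exactly (ii). I expect the only delicate point to be the bookkeeping of the line-bundle coefficients -- verifying that $\eta^*p^*\ell\cong\ell$ and that the structure form of the distribution induced on $\operatorname{Graph}(\eta)$ really is the restriction of $\gamma_G$ and not some twist of it -- whereas the analytic heart of the argument, the identity $\eta^*\gamma_G=\theta_C+\operatorname{pr}_{T\mathcal{F}}^{*}\eta$, is a short direct computation.
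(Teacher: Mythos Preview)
Your argument is correct and essentially the same as the paper's: both compute the pullback of $\gamma_G$ to $C$, obtain $\theta_C+\operatorname{pr}_{T\mathcal{F}}^{*}\eta$, and then invoke Remark~\ref{rem:precontact}. The only cosmetic difference is that the paper factors the section as $\eta=\sigma_\eta\circ i$ through the fiberwise translation $\sigma_\eta\colon\alpha\mapsto\alpha+\eta(p(\alpha))$ and first proves the ambient identity $\sigma_\eta^\ast\gamma_G=\gamma_G+p^\ast(\operatorname{pr}_{T\mathcal{F}}^{*}\eta)$ on all of $U$ before restricting to the zero section; this extra step is reused later (Lemma~\ref{lem:diffeos} and Remark~\ref{rem:transverse}) but is not needed for the lemma itself. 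One small wording issue: your appeal to ``transversality is an open condition'' is not the right justification for arbitrary sections; as the paper notes in Remark~\ref{rem:transverse}, transversality of $\operatorname{Graph}(\eta)$ is in fact automatic because $\theta_C+\operatorname{pr}_{T\mathcal{F}}^{*}\eta$ can never vanish (it agrees with $\theta_C$ on $G$, and $G\not\subset\xi_C$).
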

	
	Recall that we fixed a complement $TC=T\mathcal{F}\oplus G$. The map $\operatorname{pr}_{T\mathcal{F}}^{*}$ appearing in $ii)$ above is the dual of the projection  $\operatorname{pr}_{T\mathcal{F}}:TC\rightarrow T\mathcal{F}$ along $G$. 
	
	\begin{proof}
		The section $\eta\in\Gamma(U)$ defines a diffeomorphism
		\[
		\sigma_{\eta}:T^{*}\mathcal{F}\otimes \ell\rightarrow T^{*}\mathcal{F}\otimes \ell:(x,\alpha)\mapsto (x,\alpha+\eta(x)),
		\]	
		which is just the translation by $\eta$. Note that $\sigma_\eta$ preserves the fibers of $p:T^{*}\mathcal{F}\otimes \ell\rightarrow C$, hence it induces a well-defined pullback on $\Omega^{1}(T^{*}\mathcal{F}\otimes \ell,p^{*}\ell)$. Since $\sigma_\eta$ takes $C$ to $\text{graph}(\eta)$, we have that
		$\text{graph}(\eta)$ is regular coisotropic for $\ker\gamma_G$ exactly when $C$ is regular coisotropic for $(\sigma_{-\eta})_{*}\big(\ker\gamma_G\big)=\ker\big(\sigma_{\eta}^{*}\gamma_G\big)$. We claim to have the following expression for $\sigma_{\eta}^{*}\gamma_G$.
		
		\vspace{0.2cm}
		\noindent
		\underline{Claim:} $\sigma_{\eta}^{*}\gamma_G=\gamma_G+p^{*}\big(\operatorname{pr}_{T\mathcal{F}}^{*}\eta\big)$.
		
		\vspace{0.1cm}
		\noindent	
		To prove the claim, we only have to check that $\sigma_{\eta}^{*}\theta_G=\theta_G+p^{*}\big(\operatorname{pr}_{T\mathcal{F}}^{*}\eta\big)$. To this end, we compute for $\alpha\in(T^{*}\mathcal{F}\otimes \ell)_{x}$ and $v\in T_{\alpha}(T^{*}\mathcal{F}\otimes \ell)$,
		\begin{align*}
			\big(\sigma_{\eta}^{*}\theta_G\big)_{\alpha}(v)&=\big(\theta_G\big)_{\alpha+\eta(x)}((\sigma_{\eta})_{*}v)\\
			&=\big\langle \alpha+\eta(x), \text{pr}_{T\mathcal{F}}(p_{*}(\sigma_{\eta})_{*}v)\big\rangle\\
			&=\big(\theta_G\big)_{\alpha}(v)+\big\langle \eta(x),\text{pr}_{T\mathcal{F}}(p_{*}v)\big\rangle\\
			&=\big(\theta_G\big)_{\alpha}(v)+\big(p^{*}\big(\operatorname{pr}_{T\mathcal{F}}^{*}\eta\big)\big)_{\alpha}(v).
		\end{align*} 
		The result now follows from Rem.~\ref{rem:precontact}, since the pullback of $\sigma_{\eta}^{*}\gamma_G$ to $C$ reads
		\[
		i^{*}(\sigma_{\eta}^{*}\gamma_G)=\theta_C+\operatorname{pr}_{T\mathcal{F}}^{*}\eta.\qedhere
		\]
	\end{proof}
	
	\begin{remark}\label{rem:transverse}
		The proof of Lemma \ref{lem:coiso-in-model} shows that the graph of any section $\eta\in\Gamma(U)$ is automatically transverse to the hyperplane distribution $\ker\gamma_G$. Therefore, the regularity requirement is automatically met. To see why $\text{graph}(\eta)\pitchfork \ker\gamma_G$, note that this is equivalent with $C\pitchfork\ker\big(\gamma_G+p^{*}\big(\operatorname{pr}_{T\mathcal{F}}^{*}\eta\big)\big)$, which in turn is equivalent with $\theta_C+\operatorname{pr}_{T\mathcal{F}}^{*}\eta$ being nowhere zero. The latter statement holds, for if $\theta_C+\operatorname{pr}_{T\mathcal{F}}^{*}\eta$ would vanish at some point, then $\theta_C$ would vanish there, which is impossible. 
	\end{remark}
	
	Lemma \ref{lem:coiso-in-model} has the advantage that it describes regular coisotropic deformations of $C$ in terms of information that lives on the manifold $C$ itself. It
	shows in particular that small coisotropic deformations of $C$ induce deformations of the pre-contact structure $\xi_C$ with the same rank, i.e. there is an assignment
	\[
	\text{Def}_{coiso}(C)\rightarrow\text{Def}_{pre-contact}(\xi_C):\eta\mapsto \ker\big(\theta_C+\operatorname{pr}_{T\mathcal{F}}^{*}\eta\big).
	\]

	\subsection{Coisotropic deformations with fixed characteristic foliation}
	
	\vspace{0.1cm}
	\noindent
	
	Assume we are given a regular coisotropic submanifold $C\subset(M,\xi)$ with characteristic foliation $\mathcal{F}$. We aim to study coisotropic deformations $C'$ of $C$, which are small enough so that they remain regular and whose characteristic foliation $\mathcal{F}'$ is diffeomorphic to $\mathcal{F}$. More precisely, we are interested in $\mathcal{C}^{1}$-small elements of the following space.
	
	\begin{defi}\label{def:Def}
		We define $\text{Def}_{\mathcal{F}}(C)$ to be the space of regular coisotropic submanifolds $C'\subset(M,\xi)$ for which there exists a foliated diffeomorphism $(C,\mathcal{F})\overset{\sim}{\rightarrow}(C',\mathcal{F}')$.
	\end{defi}
	
	Thanks to Thm.~\ref{thm:normalform}, we can pass from $(M,\xi)$ to the contact thickening $(U,\gamma_G)$. Doing so turns the problem into studying coisotropic sections $\eta\in\Gamma(U,\gamma_G)$ whose characteristic foliation is diffeomorphic to $\mathcal{F}$. In this respect, we remark the following.
	
	\begin{lemma}\label{lem:diffeos}
		For any coisotropic section $\eta\in\Gamma(U,\gamma_G)$, the following are equivalent:
		\begin{enumerate}
			\item there is a diffeomorphism $\psi:C\rightarrow\text{graph}(\eta)$ which takes $\mathcal{F}$ to the characteristic foliation of the pre-contact structure $\ker(i^{*}\gamma_G)$,
			\item there is a diffeomorphism $\phi:C\rightarrow C$ which takes $\mathcal{F}$ to to the characteristic foliation of the pre-contact structure $\ker\big(\theta_C+\operatorname{pr}_{T\mathcal{F}}^{*}\eta\big)$.
		\end{enumerate}
	\end{lemma}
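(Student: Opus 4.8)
The plan is to exhibit a single, canonical diffeomorphism between $C$ and $\mathrm{graph}(\eta)$ that intertwines the two pre-contact structures, thereby reducing the equivalence of $(1)$ and $(2)$ to pre- and post-composition with this fixed map. The natural candidate is the translation $\sigma_\eta$ from the proof of Lemma~\ref{lem:coiso-in-model}, which by construction sends the zero section $C$ onto $\mathrm{graph}(\eta)$.

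First I would set $\bar\sigma_\eta:=\sigma_\eta|_C\colon C\to\mathrm{graph}(\eta)$, $x\mapsto(x,\eta(x))$; this is a diffeomorphism with inverse $p|_{\mathrm{graph}(\eta)}$. Writing $i_0\colon C\hookrightarrow T^{*}\mathcal{F}\otimes\ell$ for the zero section and $i\colon\mathrm{graph}(\eta)\hookrightarrow U$ for the inclusion appearing in $(1)$, one has $i\circ\bar\sigma_\eta=\sigma_\eta\circ i_0$, so that
\[
\bar\sigma_\eta^{*}(i^{*}\gamma_G)=i_0^{*}(\sigma_\eta^{*}\gamma_G)=\theta_C+\operatorname{pr}_{T\mathcal{F}}^{*}\eta,
\]
where the last equality is precisely the identity established at the end of the proof of Lemma~\ref{lem:coiso-in-model}, and the line bundle $(p\circ i)^{*}\ell$ underlying $i^{*}\gamma_G$ is identified with $\ell$ via $\bar\sigma_\eta$ (note $p\circ i\circ\bar\sigma_\eta=\mathrm{id}_C$). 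Since $\eta$ is coisotropic, both $\theta_C+\operatorname{pr}_{T\mathcal{F}}^{*}\eta$ and $i^{*}\gamma_G$ are genuine pre-contact structure forms by Lemma~\ref{lem:coiso-in-model}, so this identity says exactly that $\bar\sigma_\eta$ is an isomorphism of pre-contact manifolds.

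Next I would invoke the naturality of the characteristic foliation: because $\bar\sigma_\eta$ pulls back the structure form $i^{*}\gamma_G$ to $\theta_C+\operatorname{pr}_{T\mathcal{F}}^{*}\eta$, it carries the hyperplane distribution of the former onto that of the latter and, via the intrinsic formula $\omega(X,Y)=\theta([X,Y])$ together with $[\,\bar\sigma_{\eta*}X,\bar\sigma_{\eta*}Y\,]=\bar\sigma_{\eta*}[X,Y]$, it intertwines the induced curvature forms. Hence it maps the characteristic foliation $\mathcal{G}_C$ of $\ker(\theta_C+\operatorname{pr}_{T\mathcal{F}}^{*}\eta)$ onto the characteristic foliation $\mathcal{G}$ of $\ker(i^{*}\gamma_G)$. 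With this intertwining in hand the two implications are purely formal: given $\phi$ as in $(2)$, the map $\psi:=\bar\sigma_\eta\circ\phi$ takes $\mathcal{F}$ to $\bar\sigma_\eta(\mathcal{G}_C)=\mathcal{G}$, yielding $(1)$; conversely, given $\psi$ as in $(1)$, the map $\phi:=\bar\sigma_\eta^{-1}\circ\psi$ takes $\mathcal{F}$ to $\mathcal{G}_C$, yielding $(2)$.

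The only genuine content lies in the intertwining step, and the single point demanding care there is the bookkeeping of the line bundle: one must check that $\bar\sigma_\eta$ identifies $(p\circ i)^{*}\ell$ with $\ell$ compatibly, so that $\bar\sigma_\eta$ is an isomorphism of pre-contact structures and not merely of the underlying hyperplane distributions. Once this is verified, the curvature forms correspond under $\bar\sigma_\eta$ and the characteristic foliations match automatically, after which both directions of the equivalence follow by the formal composition argument above.
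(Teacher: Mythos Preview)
Your proposal is correct and follows essentially the same approach as the paper: the paper's proof uses the very same diffeomorphism (denoted $\tau_\eta$ there, your $\bar\sigma_\eta$), observes that it pulls back $i^*\gamma_G$ to $\theta_C+\operatorname{pr}_{T\mathcal{F}}^*\eta$ via the computation in Lemma~\ref{lem:coiso-in-model}, and then defines $\phi=p|_{\mathrm{graph}(\eta)}\circ\psi$ and $\psi=\tau_\eta\circ\phi$ exactly as you do. Your write-up is somewhat more explicit about the line-bundle identification and the naturality of the characteristic foliation, points the paper leaves implicit.
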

	\begin{proof}
		The section $\eta$ defines a diffeomorphism $\tau_{\eta}:C\rightarrow\text{graph}(\eta)$ which, as shown in the proof of Lemma \ref{lem:coiso-in-model}, pulls back $i^{*}\gamma_G\in\Omega^{1}(\text{graph}(\eta),p^{*}\ell|_{\text{graph}(\eta)})$ to $\theta_C+\operatorname{pr}_{T\mathcal{F}}^{*}\eta\in\Omega^{1}(C,\ell)$. The inverse of $\tau_{\eta}$ is just the restriction of the bundle projection $p|_{\text{graph}(\eta)}$. Hence, given $\psi:C\rightarrow\text{graph}(\eta)$ as in $(1)$ above, we can set $\phi:=p|_{\text{graph}(\eta)}\circ\psi$. Conversely, given given $\phi$ as in $(2)$ above, we can set $\psi=\tau_{\eta}\circ\phi$.
	\end{proof}
	
	With Lemma \ref{lem:diffeos} in mind, we now introduce a space $\text{Def}^{U}_{\mathcal{F}}(C)$ which serves as a local model for $\text{Def}_{\mathcal{F}}(C)$. The space $\text{Def}^{U}_{\mathcal{F}}(C)$ is the main object of study in this note.
	
	\begin{defi}\label{def:DefU}
		Let $C\subset(M,\xi)$ be a regular coisotropic submanifold with characteristic foliation $\mathcal{F}$ and contact thickening $(U,\gamma_G)$. We define $\text{Def}^{U}_{\mathcal{F}}(C)$ to be the space of sections $\eta\in\Gamma(U)$ such that
		\begin{equation*}
			\begin{cases}
				\text{Graph}(\eta)\ \text{is coisotropic in}\ (U,\gamma_{G})\\
				\text{There is a foliated diffeomorphism}\  (C,\mathcal{F})\overset{\sim}{\rightarrow}\big(C,\mathcal{F}_{\eta}\big)
			\end{cases}.
		\end{equation*}
		Here $\mathcal{F}_{\eta}$ is the characteristic foliation of the pre-contact structure $\ker\big(\theta_C+\operatorname{pr}_{T\mathcal{F}}^{*}\eta\big)$.
	\end{defi}


	\subsection{First order deformations of coisotropic submanifolds and foliations}\label{subsec:first-order}
	
	\vspace{0.1cm}
	\noindent
	
	There are two classes of geometric structures whose deformation theory is important in this note, namely coisotropic submanifolds of contact manifolds and foliations. We will now recall the cochain complexes underlying first order deformations of these objects.

	\subsubsection{First order deformations of coisotropic submanifolds}
	Let $C^{k+1}\subset(M^{2n+1},\xi)$ be a regular coisotropic submanifold with characteristic foliation $\mathcal{F}$. First order deformations of $C$ as a coisotropic submanifold -- or equivalently, as a regular coisotropic submanifold (see Rem.~\ref{rem:transverse})-- are $1$-cocycles in the complex $\big(\Omega^{\bullet}(\mathcal{F},\ell),d_{\nabla^{\ell}}\big)$, which is defined as follows.
	Note that the line bundle $\ell=TC/\xi_C$ carries a flat $T\mathcal{F}$-connection $\nabla^{\ell}$, given by
	\begin{equation}\label{eq:conn}
		\nabla^{\ell}_{X}\theta_C(Y)=\theta_C([X,Y]),\hspace{0.5cm}X\in\Gamma(T\mathcal{F}),Y\in\mathfrak{X}(C),
	\end{equation}
	where $\theta_C\in\Omega^{1}(C,\ell)$ is the structure form of $\xi_C$. This connection induces a differential $d_{\nabla^{\ell}}$ on $\Gamma(\wedge^{\bullet}T^{*}\mathcal{F}\otimes \ell)$, defined by the usual Koszul formula
	\begin{align}\label{eq:diff}
		d_{\nabla^{\ell}}\eta(X_0,\ldots,X_m)&=\sum_{i=0}^{m}(-1)^{i}\nabla^{\ell}_{X_i}\big(\eta(X_0,\ldots,X_{i-1},\widehat{X_i},X_{i+1},\ldots,X_m)\big)\nonumber\\
		&\hspace{0.5cm}+\sum_{i<j}(-1)^{i+j}\eta\big([X_i,X_j],X_0,\ldots,\widehat{X_i},\ldots,\widehat{X_j},\ldots,X_m\big).
	\end{align}  
	The resulting complex $\big(\Omega^{\bullet}(\mathcal{F},\ell),d_{\nabla^{\ell}}\big)$ is the one that governs first order deformations of $C$. This follows in particular from the main result in \cite[\S 5]{jacobi}, which shows that coisotropic deformations of $C$ correspond with Maurer-Cartan elements of a certain $L_{\infty}$-algebra structure on $\Omega^{\bullet}(\mathcal{F},\ell)[1]$ whose unary bracket coincides with the differential $d_{\nabla^{\ell}}$. Below, we present a simple geometric proof for this fact which avoids the use of $L_{\infty}$-algebras.

	\begin{prop}
		\label{prop:first-order}
		Let $C^{k+1}\subset(M^{2n+1},\xi)$ be a regular coisotropic submanifold with characteristic foliation $\mathcal{F}$ and local model $(U,\ker\gamma_G)$. If $\eta_t$ is a smooth path of coisotropic sections of $U$ starting at the zero section $\eta_0=0$, then $\dt{\eta_0}$ is closed with respect to $d_{\nabla^{\ell}}$.
	\end{prop}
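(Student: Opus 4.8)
The plan is to differentiate the coisotropic condition, in the form supplied by Lemma~\ref{lem:coiso-in-model}, at $t=0$. By that lemma together with Remark~\ref{rem:precontact}, the hypothesis that each $\operatorname{Graph}(\eta_t)$ be coisotropic means exactly that the $\ell$-valued $1$-form $\theta_t := \theta_C + \operatorname{pr}_{T\mathcal{F}}^{*}\eta_t$ cuts out a pre-contact structure $\ker\theta_t$ of constant rank $2n-k$ on $C$. I write $\omega_t$ for its curvature form, characterized by $\omega_t(U,V) = \theta_t([U,V])$ for $U,V \in \Gamma(\ker\theta_t)$; the pre-contact condition says the characteristic distribution $\ker\omega_t^{\flat} \subset \ker\theta_t$ has constant rank $2n-k$, and at $t=0$ it equals $T\mathcal{F}$. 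Since $\dot\eta_0 \in \Gamma(T^{*}\mathcal{F}\otimes\ell) = \Omega^{1}(\mathcal{F},\ell)$, the goal is to identify the linearization of this constant-rank condition at $t=0$ with the statement $d_{\nabla^{\ell}}\dot\eta_0 = 0$.

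The first step is to produce, for given vector fields $X,Y \in \Gamma(T\mathcal{F})$, smooth families $X_t,Y_t \in \Gamma(\ker\theta_t)$ with $X_0 = X$, $Y_0 = Y$, so that the curvatures can be differentiated as a family of honest $\ell$-valued $2$-forms. Concretely, I fix a vector field $R$ with $\theta_C(R)$ nowhere zero (so $\theta_t(R)$ is nowhere zero for small $t$) and set $X_t := X - \tfrac{\theta_t(X)}{\theta_t(R)}R$, the projection of $X$ into $\ker\theta_t$ along $R$. Since $\theta_C(X)=0$ we have $\theta_t(X)=\eta_t(X)$, whence $X_0 = X$ and $\dot X_0 = -\tfrac{\dot\eta_0(X)}{\theta_C(R)}R$, and similarly for $Y$.

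Next I would show that the coisotropic hypothesis forces $\frac{d}{dt}\big|_0\,\omega_t(X_t,Y_t)=0$. Because $\ker\omega_t^{\flat}$ has constant rank $2n-k$ jointly in $(t,p)$, it is a smooth subbundle and hence, near any point, admits a smooth family of sections $v_t$ with $v_0 = X$; then $\omega_t(v_t,Y_t)=0$ identically, as $v_t\in\ker\omega_t^{\flat}$ and $Y_t\in\ker\theta_t$. Writing $\omega_t(X_t,Y_t)=\omega_t(v_t,Y_t)+\omega_t(X_t-v_t,Y_t)$ and differentiating at $t=0$, the first term vanishes and the second reduces to $\omega_0(\dot X_0-\dot v_0,\,Y)$, which is zero because $Y\in T\mathcal{F}=\ker\omega_0^{\flat}$.

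The main computational step, and the part I expect to be the real work, is to evaluate $\frac{d}{dt}\big|_0\,\omega_t(X_t,Y_t)=\frac{d}{dt}\big|_0\,\theta_t([X_t,Y_t])$ explicitly and match it with $-d_{\nabla^{\ell}}\dot\eta_0(X,Y)$. This splits into $\dot\theta_0([X,Y])$, which equals $\dot\eta_0([X,Y])$ because $[X,Y]\in T\mathcal{F}$, plus the term $\theta_C\big(\frac{d}{dt}\big|_0[X_t,Y_t]\big)$. Expanding the latter via $\dot X_0=-\tfrac{\dot\eta_0(X)}{\theta_C(R)}R$ (and its analogue for $Y$), the Leibniz rule for brackets, and the defining identity \eqref{eq:conn} to rewrite $\theta_C([X,R])=\nabla^{\ell}_X(\theta_C(R))$ and $\theta_C([R,Y])=-\nabla^{\ell}_Y(\theta_C(R))$, the terms involving $R$ cancel and leave $\nabla^{\ell}_Y(\dot\eta_0(X))-\nabla^{\ell}_X(\dot\eta_0(Y))$. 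The delicate bookkeeping here is handling the line-bundle-valued ratios $\tfrac{\dot\eta_0(X)}{\theta_C(R)}$, for which I would repeatedly use $\nabla^{\ell}_Y(s)=Y\big(\tfrac{s}{\rho}\big)\rho+\tfrac{s}{\rho}\nabla^{\ell}_Y\rho$ with $\rho:=\theta_C(R)$. Summing the two contributions gives $\frac{d}{dt}\big|_0\,\omega_t(X_t,Y_t)=\dot\eta_0([X,Y])+\nabla^{\ell}_Y(\dot\eta_0(X))-\nabla^{\ell}_X(\dot\eta_0(Y))=-d_{\nabla^{\ell}}\dot\eta_0(X,Y)$. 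Combined with the vanishing from the previous step, this yields $d_{\nabla^{\ell}}\dot\eta_0(X,Y)=0$ for all $X,Y\in\Gamma(T\mathcal{F})$, and hence $d_{\nabla^{\ell}}\dot\eta_0=0$, since this form lives in $\Omega^{2}(\mathcal{F},\ell)$.
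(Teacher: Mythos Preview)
Your proposal is correct and follows essentially the same strategy as the paper: differentiate the pre-contact condition supplied by Lemma~\ref{lem:coiso-in-model} at $t=0$ and identify the result with $d_{\nabla^\ell}\dt{\eta}_0$. The paper streamlines your two-step construction by choosing from the outset a smooth local frame $X_i^t$ of $T\mathcal{F}_t$ (rather than your $X_t\in\ker\theta_t$ plus a separate $v_t\in T\mathcal{F}_t$ for the vanishing step), so that both $\theta_t(X_i^t)=0$ and $\theta_t([X_i^t,X_j^t])=0$ hold identically and can be differentiated directly; the key relation $\theta_C(\dt{X}^0_i)=-\dt{\eta}_0(X^0_i)$ and the subsequent computation agree with yours.
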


	\begin{proof}
		By Lemma~\ref{lem:coiso-in-model}, we get a smooth family of pre-contact forms $\theta_C+\operatorname{pr}_{T\mathcal{F}}^\ast\eta_t\in\Omega^{1}(C,\ell)$. Denoting the characteristic foliation of $\ker(\theta_C+\operatorname{pr}_{T\mathcal{F}}^\ast\eta_t)$ by $\mathcal{F}_{t}$, we have a smooth family of rank $2n-k$ distributions $T\mathcal{F}_t$ on $C$.
		Choose a local frame $X_1^t,\ldots,X_{2n-k}^t$ of $T\mathcal{F}_t$ that is smoothly depending on $t$. In particular, we have
		\begin{align}
			\label{eq:proof:prop:first-order:hyperplane}
			(\theta_C+\operatorname{pr}_{T\mathcal{F}}^\ast\eta_t)(X^t_i&)=0,\\
			\label{eq:proof:prop:first-order:characteristic}
			(\theta_C+\operatorname{pr}_{T\mathcal{F}}^\ast\eta_t)([X^t_i&,X^t_j])=0,
		\end{align}
		for all $i,j=1,\ldots,2n-k$.
		Differentiating the equation \eqref{eq:proof:prop:first-order:hyperplane} at time $t=0$ and using the hypothesis $\eta_0=0$, one gets that
		\begin{equation}
			\label{eq:proof:prop:first-order:hyperplane_diff}
			\theta_C(\dt{X}^0_i)+\dt{\eta}_0(X^0_i)=0,
		\end{equation}
		for all $i=1,\ldots,2n-k$.
		Similarly, differentiating the equation \eqref{eq:proof:prop:first-order:characteristic} at time $t=0$ and using $\eta_0=0$ as well as the equality \eqref{eq:proof:prop:first-order:hyperplane_diff}, we get
		\begin{align*}
			0&=\theta_C([\dt{X}^0_i,X^0_j])+\theta_C([X^0_i,\dt{X}^0_j])+\dt{\eta}_0([X^0_i,X^0_j])\\
			&=-\nabla^\ell_{X^0_j}(\theta_C(\dt{X}^0_i))+\nabla^\ell_{X^0_i}(\theta_C(\dt{X}^0_j))+\dt{\eta}_0([X^0_i,X^0_j])\\
			&=\nabla^\ell_{X^0_j}(\dt{\eta}_0(X^0_i))-\nabla^\ell_{X^0_i}(\dt{\eta}_0(X^0_j))+\dt{\eta}_0([X^0_i,X^0_j])\\
			&=(d_{\nabla^\ell}\dt{\eta}_0)(X^0_j,X^0_i),
		\end{align*}
		for all $i,j=1,\ldots,2n-k$.
		This proves that $d_{\nabla^\ell}\dt{\eta}_0$ vanishes.
	\end{proof}

	We now consider first order deformations of $C\subset(M,\xi)$ that are geometrically trivial, in the sense that they arise from applying a contact isotopy of $(M,\xi)$ to $C$. Recall that a contact isotopy is the flow of a time-dependent contact vector field on $(M,\xi)$.

	\begin{remark}\label{rem:contactVF}
		Given a contact manifold $(M,\xi)$ with quotient line bundle $L=TM/\xi$ and structure form $\theta\in\Omega^{1}(M,L)$, the space of contact vector fields $\mathfrak{X}_{\xi}(M)$ fits in a canonical direct sum decomposition \cite[Prop.~5.5]{jacobi}
		\[
		\mathfrak{X}(M)=\mathfrak{X}_{\xi}(M)\oplus\Gamma(\xi).
		\]
		It follows that there is an isomorphism of vector spaces
		\begin{equation}\label{eq:contact-vf2}
			X_{(-)}:\Gamma(L)\rightarrow\mathfrak{X}_{\xi}(M):\lambda\mapsto X_{\lambda},
		\end{equation}
		which is defined by setting $X_{\lambda}$ to be the unique contact vector field satisfying $\theta(X_{\lambda})=\lambda$.
	\end{remark}
	
	\color{black}
	It turns out that first order deformations of $C$ arising from contact isotopies are cohomologically trivial, i.e. one-coboundaries in the complex $\big(\Omega^{\bullet}(\mathcal{F},\ell),d_{\nabla^{\ell}}\big)$. This also follows from the main result in \cite{jacobi}, since the notion of gauge equivalence in the $L_{\infty}$-algebra structure on $\Omega^{\bullet}(\mathcal{F},\ell)[1]$ described there agrees with the equivalence relation by contact isotopies. Again, we believe it is instructive to give a geometric proof which avoids the use of $L_{\infty}$-algebras.

	\begin{prop}
		\label{prop:equiv}
		Let $C\subset(M,\xi)$ be a regular coisotropic submanifold with characteristic foliation $\mathcal{F}$ and local model $(U,\ker\gamma_G)$.
		If a smooth path $\eta_t$ of coisotropic sections of $U$, starting at $\eta_0=0$, is generated by a contact isotopy of $(U,\ker\gamma_G)$, then $\dt{\eta}_0$ is a $1$-coboundary in the complex $(\Omega^\bullet(\mathcal{F},\ell),d_{\nabla^\ell})$.
	\end{prop}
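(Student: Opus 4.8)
The plan is to show that the geometrically trivial first order deformation $\dt{\eta}_0$ equals $-d_{\nabla^\ell}$ of the contact Hamiltonian generating the isotopy, restricted to $C$. Let $\psi_t$ be the contact isotopy with $\psi_t(C)=\text{graph}(\eta_t)$ and $\psi_0=\mathrm{id}$, and write $X_0=\dt{\psi}_0$ for the generating contact vector field at $t=0$, with Hamiltonian $\lambda_0:=\gamma_G(X_0)\in\Gamma(p^*\ell)$ as in Rem.~\ref{rem:contactVF}. The decisive first move is to transfer the problem from the thickening $U$ down to the base $C$, producing a path of diffeomorphisms of $C$ that intertwine the relevant structure forms.

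Concretely, I would proceed as follows. Recall from the proof of Lemma~\ref{lem:diffeos} that the translation $\tau_{\eta_t}\colon C\to\text{graph}(\eta_t)$ pulls back the restriction $i^{*}\gamma_G$ to $\theta_C+\operatorname{pr}_{T\mathcal{F}}^{*}\eta_t$. Since $\psi_t$ is a contactomorphism of $(U,\ker\gamma_G)$, it preserves $\gamma_G$ up to the induced automorphism of the line bundle $p^{*}\ell$; restricting $\psi_t^{*}\gamma_G=\gamma_G$ to the zero section $C$ and using that $i^{*}\gamma_G=\theta_C$ gives $(\psi_t|_C)^{*}(i^{*}\gamma_G)=\theta_C$. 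Setting $\phi_t:=p|_{\text{graph}(\eta_t)}\circ\psi_t|_C\colon C\to C$, so that $\psi_t|_C=\tau_{\eta_t}\circ\phi_t$ and $\phi_0=\mathrm{id}$, these two facts combine into the key relation
\[
\phi_t^{*}\big(\theta_C+\operatorname{pr}_{T\mathcal{F}}^{*}\eta_t\big)=\theta_C .
\]
I would then differentiate this at $t=0$ and contract with characteristic directions. Writing $Y:=\dt{\phi}_0\in\mathfrak{X}(C)$ and using $\phi_0=\mathrm{id}$, $\eta_0=0$, differentiation produces an identity of the schematic shape $\operatorname{pr}_{T\mathcal{F}}^{*}\dt{\eta}_0=(\text{multiple of }\theta_C)-\mathcal{L}_Y\theta_C$, where the multiple of $\theta_C$ records the derivative of the induced line-bundle action. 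Evaluating on any $X\in\Gamma(T\mathcal{F})$, the $\theta_C$-term drops out because $\theta_C(X)=0$, and this same vanishing reduces the Lie-derivative term to $(\mathcal{L}_Y\theta_C)(X)=\theta_C([X,Y])=\nabla^\ell_X(\theta_C(Y))$ by \eqref{eq:conn}. Hence $\dt{\eta}_0(X)=-\nabla^\ell_X\lambda$ with $\lambda:=\theta_C(Y)$, that is $\dt{\eta}_0=-d_{\nabla^\ell}\lambda$ is a $1$-coboundary. A short check that $Y=dp(X_0|_C)$ and $\theta_C(Y)=\lambda_0|_C$ then identifies the primitive as the contact Hamiltonian restricted to $C$.

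The main obstacle is the bookkeeping of the $\ell$-valued pullbacks. The diffeomorphism $\phi_t$ need not preserve $\xi_C=\ker\theta_C$; rather it maps $\xi_C$ onto the deformed hyperplane $\ker(\theta_C+\operatorname{pr}_{T\mathcal{F}}^{*}\eta_t)$, so $\phi_t^{*}$ acts on $\ell$ through a nontrivial conformal factor $g_t$. Trivializing $\ell$ by a leafwise $\nabla^\ell$-flat frame $s$ and writing $\theta_C=\vartheta\otimes s$, $\eta_t=\hat\eta_t\otimes s$, $\hat\beta_t=\vartheta+\operatorname{pr}_{T\mathcal{F}}^{*}\hat\eta_t$, the key relation becomes the scalar identity $\phi_t^{*}\hat\beta_t=g_t\,\vartheta$ with $g_0=1$. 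Carrying $g_t$ through the differentiation and verifying that its derivative contributes only a multiple of $\theta_C$ — hence nothing after restriction to $T\mathcal{F}$, where $\vartheta$ vanishes — is the one delicate point. Once it is dispatched, the identification of the surviving term with $d_{\nabla^\ell}(\theta_C(Y))$ via \eqref{eq:conn} is immediate, and the proof concludes.
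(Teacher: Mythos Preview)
Your proposal is correct and follows essentially the same route as the paper: both project the contact isotopy to an isotopy of $C$, establish that the pullback of the deformed pre-contact form $\theta_C+\operatorname{pr}_{T\mathcal{F}}^{*}\eta_t$ equals $\theta_C$ (suitably interpreted), differentiate at $t=0$, and evaluate on $\Gamma(T\mathcal{F})$ to obtain $\dt{\eta}_0=-d_{\nabla^\ell}(\theta_C(Y))$. The only difference is bookkeeping for the $\ell$-valued pullback---the paper works directly with the induced line bundle automorphism $\Psi_t\colon\ell\to\ell$ and its generating derivation $\Delta_0\in\Gamma(D\ell)$, whereas you trivialize locally and track a conformal factor $g_t$ (and in fact your leafwise-flat choice of frame is unnecessary, since the identity $\theta_C([X,Y])=\nabla^\ell_X(\theta_C(Y))$ holds in any frame).
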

	
	\begin{proof}
		Let $\varphi_t$ be a contact isotopy of $(U,\ker\gamma_G)$ generating $\eta_t$, i.e.~
		$
		\varphi_t(C)=\eta_t(C).
		$
		It yields an isotopy $\psi_t:=p\circ\varphi_t|_C$ of $C$ satisfying
		\begin{equation}\label{eq:rel}
			\varphi_t|_C=\eta_t\circ\psi_t.
		\end{equation}
		We claim that $(\psi_t)_{*}$ matches the pre-contact distributions $\ker\theta_C$ and $\ker(\theta_C+\operatorname{pr}_{T\mathcal{F}}^\ast\eta_t)$.
		To prove this, note that
		\[
		\ker\theta_C=TC\cap\ker\gamma_G=\big((\varphi_t|_C)_{*}\big)^{-1}\ker\gamma_G,
		\]
		where we used that $\varphi_t$ is a contactomorphism. On the other hand,
		\[
		\ker(\theta_C+\operatorname{pr}_{T\mathcal{F}}^\ast\eta_t)=\big((\eta_t)_{*}\big)^{-1}\ker\gamma_G.
		\] 
		Along with the relation \eqref{eq:rel}, this proves the claim.
		It follows that there is a unique isotopy $\Psi_t\colon\ell\to\ell$ of line bundle automorphisms covering $\psi_t\colon C\to C$, such that
		\begin{equation}
			\label{eq:proof:prop:equiv:pullback}
			\theta_C=\Psi_t^\ast(\theta_C+\operatorname{pr}_{T\mathcal{F}}^\ast\eta_t).
		\end{equation}
		
		Denote by $\Delta_t\in\Gamma(\mathcal{D}\ell)$ the time-dependent derivation generating $\Psi_t$; its symbol is the time dependent vector field $X_t:=\sigma(\Delta_t)\in\mathfrak{X}(C)$ generating $\psi_t$.
		This means in particular that, for all $\lambda\in\Gamma(\ell)$ and $Y\in\mathfrak{X}(C)$,
		\begin{equation*}
			\left.\frac{d}{dt}\right|_{t=0}\Psi_t^\ast\lambda=\Delta_0\lambda\quad\text{and}\quad \left.\frac{d}{dt}\right|_{t=0}\psi_t^\ast Y=[X_0,Y].
		\end{equation*}
		Consequently, we have for all $\alpha\in\Omega^1(C,\ell)$ and $Y\in\mathfrak{X}(C)$,
		\begin{align}
			\label{eq:proof:prop:equiv:Lie_derivative}
			\left.\frac{d}{dt}\right|_{t=0}(\Psi_t^\ast\alpha)(Y)=\left.\frac{d}{dt}\right|_{t=0}\Psi_t^\ast\left(\alpha\big((\psi_t^{-1})^{*}Y\big)\right)
			&=\Delta_0(\alpha(Y))-\alpha([X_0,Y]).
		\end{align}
		If we evaluate both sides of \eqref{eq:proof:prop:equiv:pullback} on arbitrary $Y\in\Gamma(T\mathcal{F})$ and then differentiate at time $t=0$ using \eqref{eq:proof:prop:equiv:Lie_derivative}, we get
		\begin{align*}
			0&=\left.\frac{d}{dt}\right|_{t=0}\theta_C(Y)\\
			&=\left.\frac{d}{dt}\right|_{t=0}(\Psi_t^\ast(\theta_C+\operatorname{pr}_{T\mathcal{F}}^\ast\eta_t))(Y)\\
			&=\left.\frac{d}{dt}\right|_{t=0}(\Psi_t^\ast\theta_C)(Y)+\left.\frac{d}{dt}\right|_{t=0}\operatorname{pr}_{T\mathcal{F}}^\ast\eta_t(Y)\\
			&=\Delta_0(\theta_C(Y))-\theta_C([X_0,Y]))+\dt{\eta}_0(Y)\\
			&=\nabla^\ell_Y(\theta_C(X_0))+\dt{\eta}_0(Y)\\
			&=(d_{\nabla^\ell}(\theta_C(X_0))+\dt{\eta}_0)(Y).
		\end{align*}
		This shows that $\dt{\eta}_0=-d_{\nabla^\ell}(\theta_C(X_0))$, i.e.~the $1$-cocycle $\dt{\eta}_0$ in $(\Omega^\bullet(\mathcal{F},\ell),d_{\nabla^\ell})$ is actually a coboundary, with primitive given by $-\theta_C(X_0)$. This finishes the proof.
	\end{proof}

	Prop.~\ref{prop:first-order} and Prop.~\ref{prop:equiv} motivate the following definition.
	
	\begin{defi}
		Let $C\subset(M,\xi)$ be regular coisotropic with characteristic foliation $\mathcal{F}$. 
		\begin{enumerate}[label=\roman*)]
			\item A \textbf{first order deformation} of $C$ is an element $\eta\in\Omega^{1}(\mathcal{F},\ell)$ that is $d_{\nabla^{\ell}}$-closed.
			\item We call $C$ \textbf{infinitesimally rigid} if the cohomology group $H^{1}(\mathcal{F},\ell)$ vanishes.
		\end{enumerate}
	\end{defi}
	
	In this note, we are interested in (un)obstructedness of first order deformations.
	
	\begin{defi}
		Let $C\subset(M,\xi)$ be a regular coisotropic submanifold with characteristic foliation $\mathcal{F}$ and local model $(U,\ker\gamma_G)$. A first order deformation $\eta$ of $C$ is said to be \textbf{unobstructed} if there exists a smooth path of coisotropic sections $\zeta_t\in\Gamma(U,\ker\gamma_G)$ satisfying $\zeta_0=0$ and $\dt{\zeta_{0}}=\eta$. Otherwise, $\eta$ is said to be \textbf{obstructed}.
	\end{defi}

	It is well-known that a regular coisotropic submanifold $C\subset(M,\xi)$ has obstructed first order deformations in general, see e.g. \cite[\S 3.1]{rigidity} for such an example. We will prove in this note that the coisotropic deformation problem of $C\subset(M,\xi)$ becomes unobstructed if one restricts to deformations whose characteristic foliation is diffeomorphic to $\mathcal{F}$.

	\subsubsection{First order deformations of foliations}
	Let $\mathcal{F}$ be a foliation on a manifold $C$. There is a flat $T\mathcal{F}$-connection on the normal bundle $N\mathcal{F}:=TC/T\mathcal{F}$, called the Bott connection, which is defined by
	\[
	\nabla^{N\mathcal{F}}_{X}(Y\ \text{mod}\ T\mathcal{F})=[X,Y]\ \text{mod}\ T\mathcal{F},\quad X\in\Gamma(T\mathcal{F}), Y\in\mathfrak{X}(C).
	\]
	We obtain a differential $d_{\nabla^{N\mathcal{F}}}$ on $\Gamma(\wedge^{\bullet}T^{*}\mathcal{F}\otimes N\mathcal{F})$ given by the same formula as \eqref{eq:diff}, and we denote by $\big(\Omega^{\bullet}(\mathcal{F},N\mathcal{F}),d_{\nabla^{N\mathcal{F}}}\big)$ the resulting complex. Heitsch \cite{heitsch} showed that this complex and its cohomology govern infinitesimal deformations of $\mathcal{F}$, in the following sense.
	
	Let $\mathcal{F}_{t}$ be a smooth path of foliations with $\mathcal{F}_{0}=\mathcal{F}$. Pick a complement $G$ to $T\mathcal{F}$ and identify $G\cong N\mathcal{F}$. 
	If $C$ is compact, then there exists $\epsilon>0$ such that $T\mathcal{F}_t$ remains transverse to $G$ for $0\leq t\leq\epsilon$. Hence, there exist $\eta_t\in\Gamma(T^{*}\mathcal{F}\otimes N\mathcal{F})$ such that
	\[
	T\mathcal{F}_t=\text{graph}(\eta_t)=\{X+\eta_t(X): X\in\Gamma(T\mathcal{F})\}.
	\]
	
	\begin{lemma}[\cite{heitsch}]
		\label{lem:first-order-fol}
		In the setup described above, we have:
		\begin{enumerate}
			\item The infinitesimal deformation $\dt{\eta_0}$ is closed with respect to $d_{\nabla^{N\mathcal{F}}}$.
			\item If the path $\mathcal{F}_t$ is generated by an isotopy $\phi_t$, i.e. $T\mathcal{F}_t=(\phi_t)_{*}T\mathcal{F}$, then the corresponding infinitesimal deformation is exact. Indeed,
			\[
			\dt{\eta_0}=d_{\nabla^{N\mathcal{F}}}(V_0\ \text{mod}\ T\mathcal{F}),
			\]
			where $V_t$ is the time-dependent vector field corresponding with the isotopy $\phi_t$.
		\end{enumerate}
	\end{lemma}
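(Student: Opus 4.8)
The plan is to follow closely the geometric arguments used in Prop.~\ref{prop:first-order} and Prop.~\ref{prop:equiv}, with the involutivity of $T\mathcal{F}_t$ now playing the role of the pre-contact condition. Write $q\colon TC\to N\mathcal{F}$ for the quotient projection, and recall that under the identification $G\cong N\mathcal{F}$ the restriction $q|_G$ is the identity. Consequently, any section $Z$ of $T\mathcal{F}_t$, written as $Z=W+\eta_t(W)$ with $W=\operatorname{pr}_{T\mathcal{F}}(Z)\in\Gamma(T\mathcal{F})$, satisfies $q(Z)=\eta_t(W)$. I would use throughout that $\eta_0=0$, as $\mathcal{F}_0=\mathcal{F}$, in order to discard both quadratic-in-$\eta_t$ terms and terms in which $\eta_0$ appears as a factor.

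For part (1), I would fix $X,Y\in\Gamma(T\mathcal{F})$ and use that $T\mathcal{F}_t$ is involutive: the bracket of the sections $X+\eta_t(X)$ and $Y+\eta_t(Y)$ of $T\mathcal{F}_t$ lies again in $\Gamma(T\mathcal{F}_t)$. Applying $q$ and using the identification above yields
\[
q\big([X+\eta_t(X),\,Y+\eta_t(Y)]\big)=\eta_t\big(\operatorname{pr}_{T\mathcal{F}}[X+\eta_t(X),\,Y+\eta_t(Y)]\big).
\]
I would then differentiate at $t=0$. On the left, a bilinear expansion of the bracket together with $\eta_0=0$ kills the quadratic term, leaving $q([X,\dt{\eta}_0(Y)])+q([\dt{\eta}_0(X),Y])$, which by the defining formula of the Bott connection equals $\nabla^{N\mathcal{F}}_X(\dt{\eta}_0(Y))-\nabla^{N\mathcal{F}}_Y(\dt{\eta}_0(X))$. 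On the right, since $\eta_0=0$ only the derivative landing on $\eta_t$ survives, and its argument is $\operatorname{pr}_{T\mathcal{F}}[X,Y]=[X,Y]$ by involutivity of $T\mathcal{F}$; this produces $\dt{\eta}_0([X,Y])$. Comparison with the Koszul formula \eqref{eq:diff} gives exactly $(d_{\nabla^{N\mathcal{F}}}\dt{\eta}_0)(X,Y)=0$.

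For part (2), I would start from the hypothesis $T\mathcal{F}_t=(\phi_t)_\ast T\mathcal{F}$, so that for $X\in\Gamma(T\mathcal{F})$ the field $X_t:=(\phi_t)_\ast X$ is a section of $T\mathcal{F}_t$. The identity above then reads $q(X_t)=\eta_t(\operatorname{pr}_{T\mathcal{F}}X_t)$. Differentiating at $t=0$: on the right, $\eta_0=0$ and $\operatorname{pr}_{T\mathcal{F}}X_0=X$ leave only $\dt{\eta}_0(X)$; on the left, the standard identity $\tfrac{d}{dt}\big|_{t=0}(\phi_t)_\ast X=[X,V_0]$ combined with the Bott connection gives $q([X,V_0])=\nabla^{N\mathcal{F}}_X(V_0\ \mathrm{mod}\ T\mathcal{F})$. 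Since for a degree-zero cochain one has $d_{\nabla^{N\mathcal{F}}}(V_0\ \mathrm{mod}\ T\mathcal{F})(X)=\nabla^{N\mathcal{F}}_X(V_0\ \mathrm{mod}\ T\mathcal{F})$, this is precisely the claimed equality $\dt{\eta}_0=d_{\nabla^{N\mathcal{F}}}(V_0\ \mathrm{mod}\ T\mathcal{F})$.

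The bilinear expansions and the connection bookkeeping are routine. The only point requiring care is the moving $T\mathcal{F}$-component $\operatorname{pr}_{T\mathcal{F}}[\,\cdot\,]$ (resp.\ $\operatorname{pr}_{T\mathcal{F}}X_t$): one must check that its $t$-variation does not contribute at first order, which holds precisely because it is paired against $\eta_0=0$. This is the exact analog of the mechanism by which $\eta_0=0$ was used in the proofs above, and once it is accounted for, both statements collapse to the Koszul formula for $d_{\nabla^{N\mathcal{F}}}$.
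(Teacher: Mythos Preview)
The paper does not prove Lemma~\ref{lem:first-order-fol}; it is merely stated with attribution to Heitsch, so there is no in-paper argument to compare against. Your proof is correct. The strategy---writing sections of $T\mathcal{F}_t$ as $X+\eta_t(X)$, exploiting involutivity (resp.\ the pushforward description) and differentiating at $t=0$ with $\eta_0=0$---is exactly the foliation analog of the arguments in Prop.~\ref{prop:first-order} and Prop.~\ref{prop:equiv}, as you intended. The key cancellations you flag (the quadratic bracket term and the $t$-variation of $\operatorname{pr}_{T\mathcal{F}}[\,\cdot\,]$ being paired against $\eta_0=0$) are precisely what make the computation collapse to the Koszul formula, and your use of $\tfrac{d}{dt}\big|_{t=0}(\phi_t)_\ast X=[X,V_0]$ is the correct sign. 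The only implicit assumption worth making explicit is $\phi_0=\operatorname{id}_C$; this is harmless, since one may always replace $\phi_t$ by $\phi_t\circ\phi_0^{-1}$ without changing $T\mathcal{F}_t$.
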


	Lemma \ref{lem:first-order-fol} motivates the following definition.
	
	\begin{defi}
		Let $C$ be a manifold with a foliation $\mathcal{F}$. 
		\begin{enumerate}[label=\roman*)]
			\item A \textbf{first order deformation} of $\mathcal{F}$ is an element $\eta\in\Omega^{1}(\mathcal{F},N\mathcal{F})$ that is $d_{\nabla^{N\mathcal{F}}}$-closed.
			\item We call $\mathcal{F}$ \textbf{infinitesimally rigid} if the cohomology group $H^{1}(\mathcal{F},N\mathcal{F})$ vanishes.
		\end{enumerate}
	\end{defi}

	\section{First order deformations with fixed characteristic foliation}\label{sec:two}
	
	In this section, we argue what are first order deformations when deforming a regular coisotropic submanifold $C$ inside $\text{Def}_{\mathcal{F}}(C)$. We show that first order deformations of $C$ give rise to trivial first order deformations of $\mathcal{F}$, via a chain map which we introduce below. While the construction of this chain map involves a choice of complement to the characteristic distribution $T\mathcal{F}$, the induced map in cohomology turns out to be completely canonical.
	
	\subsection{Smooth Paths in $\text{Def}_\mathcal{F}(C)$}
	\label{sec:smooth_paths}
	
	We first need to define a suitable notion of smoothness for paths in $\text{Def}_{\mathcal{F}}(C)$ and $\text{Def}^{U}_{\mathcal{F}}(C)$.
	
	\begin{defi}
		\label{def:smoothness}
		A path $C_t$ in $\text{Def}_{\mathcal{F}}(C)$ is smooth if there is a smooth path of embeddings $\Phi_t:C\hookrightarrow M$ such that $C_{t}=\Phi_t(C)$ and $\Phi_t:(C,\mathcal{F})\rightarrow (C_t,\mathcal{F}_{t})$ is a foliated diffeomorphism. 
	\end{defi}
	
	It will be important that a smooth path in  $\text{Def}_{\mathcal{F}}(C)$ comes with a smooth path of foliated diffeomorphisms to $(C,\mathcal{F})$. By passing to the contact thickening $(U,\ker\gamma_G)$ of $C\subset(M,\xi)$, Def.~\ref{def:smoothness} induces a notion of smoothness for paths in $\text{Def}^{U}_{\mathcal{F}}(C)$.
	
	\begin{defi}\label{def:smoothpath}
		A path $\eta_t$ in $\text{Def}^{U}_{\mathcal{F}}(C)$ is smooth if there is a smooth path of embeddings $\Phi_t:C\hookrightarrow U$ such that $\text{graph}(\eta_t)=\Phi_t(C)$ and $p\circ\Phi_t:(C,\mathcal{F})\rightarrow(C,\mathcal{F}_{\eta_{t}})$ is a foliated diffeomorphism. Here $\mathcal{F}_{\eta_{t}}$ is the foliation of the pre-contact structure $\ker(\theta_C+\operatorname{pr}_{T\mathcal{F}}^\ast\eta_t)$.
	\end{defi}

	\subsection{Description of first order deformations with fixed foliation}
	\label{sec:description_1st_order_deformations}
	
	To argue what are first order deformations when deforming a regular coisotropic submanifold $C$ within the class $\text{Def}_{\mathcal{F}}(C)$, we work in the local model $\text{Def}^{U}_{\mathcal{F}}(C)$ introduced in Def.~\ref{def:DefU}. We have to figure out what properties are satisfied by the infinitesimal deformation $\dt{\eta_0}$ coming from a smooth path $\eta_t$ in $\text{Def}^{U}_{\mathcal{F}}(C)$.
	One would expect that this infinitesimal deformation gives rise to a trivial infinitesimal deformation of the foliation $\mathcal{F}$, i.e.~a $1$-coboundary in the Bott complex $\big(\Omega^{\bullet}(\mathcal{F},N\mathcal{F}),d_{\nabla^{N\mathcal{F}}}\big)$.
	We will show that this does indeed happen, by means of a certain map $\Phi_G$ which is defined in terms of a complement $G$ to $T\mathcal{F}$.

	We first introduce some notation. In the sequel, we will freely use concepts related to the Atiyah algebroid and der-complex of a line bundle. See the Appendix for more details.	
	
	\vspace{0.2cm}
	\noindent\fbox{
		\parbox{\textwidth}{
			Let $C$ be any manifold.
			\begin{itemize}
				\item Assume that $\xi_C$ is a pre-contact distribution on $C$, with line bundle $\ell=TC/\xi_C$ and structure form $\theta_C\in\Omega^{1}(C,\ell)$.
				\item Let $\mathcal{F}$ be the characteristic foliation of $(C,\xi_C)$. Fix a complement $TC=T\mathcal{F}\oplus G$.
				\item The pre-contact form $\theta_C$ corresponds with a pre-symplectic Atiyah form $\varpi\in\Omega^{2}_{D}(\ell)$, see Prop.~\ref{prop:relation-luca}. Denote $K=\ker\varpi$. The symbol map $\sigma:D\ell\rightarrow TC$ restricts to a Lie algebroid isomorphism
				\[
				\sigma:K\overset{\sim}{\rightarrow}T\mathcal{F}.
				\] 
			\end{itemize}
		}
	}
	
	\vspace{0.2cm}

	Clearly, the chosen complement $G$ of $T\mathcal{F}$ gives rise canonically to a complement $\widetilde{G}$ of $K$.
	
	\begin{lemma}\label{lem:complement}
		The relation $\sigma(\widetilde{G})=G$ establishes a canonical 1-1 correspondence between:
		\begin{enumerate}[label=\roman*)]
			\item vector subbundles $G\subseteq TC$ such that $TC=T\mathcal{F}\oplus G$, 
			\item vector subbundles $\widetilde{G}\subseteq D\ell$ such that $D\ell=K\oplus\widetilde{G}$ and $\mathbbm{1}\in\Gamma(\widetilde{G})$.
		\end{enumerate}
	\end{lemma}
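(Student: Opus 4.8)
The plan is to identify the inverse of the assignment $\widetilde{G}\mapsto\sigma(\widetilde{G})$ explicitly as $G\mapsto\sigma^{-1}(G)$, and then check that both assignments land in the stated class and are mutually inverse. The entire argument is fiberwise linear algebra made global by constant-rank considerations, so I would organize it around the short exact sequence of vector bundles
\[
0\longrightarrow\langle\mathbbm{1}\rangle\longrightarrow D\ell\xrightarrow{\ \sigma\ }TC\longrightarrow 0,
\]
where $\langle\mathbbm{1}\rangle=\ker\sigma$ is the rank-one subbundle spanned by the identity derivation. The two structural facts I would isolate at the outset are: (a) $\ker\sigma=\langle\mathbbm{1}\rangle$ has rank $1$, so $\dim D\ell=\dim TC+1$; and (b) since $\sigma|_K\colon K\xrightarrow{\sim}T\mathcal{F}$ is an isomorphism, we have $K\cap\ker\sigma=0$ and $\operatorname{rk}K=\operatorname{rk}T\mathcal{F}$.

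First I would treat the direction from (i) to (ii). Given $G$ with $TC=T\mathcal{F}\oplus G$, set $\widetilde{G}:=\sigma^{-1}(G)$. Because $\sigma$ is a surjective bundle map with constant-rank kernel, $\sigma^{-1}(G)$ is a smooth subbundle of rank $\operatorname{rk}G+1$, and it contains $\ker\sigma=\langle\mathbbm{1}\rangle$, so $\mathbbm{1}\in\Gamma(\widetilde{G})$. To see $D\ell=K\oplus\widetilde{G}$, I would first show $K\cap\widetilde{G}=0$: if $D$ lies in both, then $\sigma(D)\in T\mathcal{F}\cap G=0$, whence $D\in K\cap\ker\sigma=0$ by (b). A dimension count then closes it, since $\operatorname{rk}K+\operatorname{rk}\widetilde{G}=\operatorname{rk}T\mathcal{F}+\operatorname{rk}G+1=\dim TC+1=\dim D\ell$.

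For the direction from (ii) to (i), starting from $\widetilde{G}$ with $D\ell=K\oplus\widetilde{G}$ and $\mathbbm{1}\in\Gamma(\widetilde{G})$, set $G:=\sigma(\widetilde{G})$. Since $\ker\sigma=\langle\mathbbm{1}\rangle\subseteq\widetilde{G}$, the restriction $\sigma|_{\widetilde{G}}$ has constant rank, so $G$ is a smooth subbundle of rank $\operatorname{rk}\widetilde{G}-1$. Applying $\sigma$ to $D\ell=K\oplus\widetilde{G}$ and using $\sigma(K)=T\mathcal{F}$ gives $T\mathcal{F}+G=\sigma(D\ell)=TC$, and the same rank bookkeeping shows the sum is direct, so $TC=T\mathcal{F}\oplus G$.

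Finally I would verify the two assignments are mutually inverse, which is pure linear algebra. Surjectivity of $\sigma$ gives $\sigma(\sigma^{-1}(G))=G$. In the other composition the standard identity $\sigma^{-1}(\sigma(\widetilde{G}))=\widetilde{G}+\ker\sigma$ holds, and since $\mathbbm{1}\in\Gamma(\widetilde{G})$ forces $\ker\sigma\subseteq\widetilde{G}$, this equals $\widetilde{G}$. I do not expect a serious obstacle: the only points needing care are confirming $\ker\sigma=\langle\mathbbm{1}\rangle$, which is built into the Atiyah-algebroid formalism recalled in the Appendix, and that the preimage and image constructions yield genuine subbundles, both of which reduce to the constant-rank behavior of $\sigma$ restricted to $K$ and to $\widetilde{G}$.
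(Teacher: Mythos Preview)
Your proof is correct and complete. The paper actually states this lemma without proof, evidently regarding it as a routine consequence of the short exact sequence $0\to\langle\mathbbm{1}\rangle\to D\ell\xrightarrow{\sigma}TC\to 0$ together with the fact that $\sigma|_K\colon K\xrightarrow{\sim}T\mathcal{F}$ is an isomorphism; your argument supplies exactly the fiberwise linear-algebra details that the authors suppress, and there is nothing to compare.
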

	
	The definition of the map $\Phi_G$ involves the following ingredients:
	\begin{itemize}
		\item The complement $\widetilde{G}$ yields a projection $\text{pr}_{K}:D\ell\rightarrow K$.
		\item The pre-symplectic Atiyah form $\varpi$ restricts to an isomorphism 
		\[
		\varpi^{\flat}|_{\widetilde{G}}:\widetilde{G}\rightarrow\big(\widetilde{G}\big)^{*}\otimes\ell.
		\]
		Hence, if $\beta\in N^{*}\mathcal{F}\otimes\ell\simeq G^{*}\otimes \ell$, then $\varpi^{\flat}|_{\widetilde{G}}^{-1}(\sigma^{*}\beta)$ is a well-defined element of $\widetilde{G}$.
	\end{itemize}

	\begin{defi}
		\label{def:Phi}
		We define 
		$
		\Phi_G\colon\Omega^\bullet(\mathcal{F},\ell)\longrightarrow\Omega^\bullet(\mathcal{F},N\mathcal{F})
		$
		by setting 
		\begin{equation}
			\label{eq:def:Phi}
			\left\langle\beta,\Phi_G(\eta)\big(\sigma(\Delta_1),\ldots,\sigma(\Delta_n)\big)\right\rangle=d_D(\operatorname{pr}_K^\ast\sigma^{*}\eta)\left(\Delta_1,\ldots,\Delta_n,\varpi^\flat|_{\widetilde{G}}^{-1}(\sigma^\ast\beta)\right).
		\end{equation}
		for $\eta\in\Omega^n(\mathcal{F},\ell), \beta\in\Gamma(N^{*}\mathcal{F}\otimes\ell)$ and $\Delta_1,\ldots,\Delta_n\in\Gamma(K)$.
	\end{defi}

	\begin{remark}
		\label{rem:Phi}
		The action of $\Phi_G$ on $\eta\in\Omega^n(\mathcal{F},\ell)$ can be equivalently described as follows.
		First apply the der-differential $d_D$ to $\operatorname{pr}_K^\ast\sigma^{*}\eta\in\Omega_D^n(\ell)$, which yields
		\begin{equation*}
			d_D(\operatorname{pr}_K^\ast\sigma^{*}\eta)\in\Gamma(\wedge^{n+1}K^\ast\otimes\ell)\oplus\Gamma(\wedge^n K^\ast\otimes\big(\widetilde{G}\big)^{*}\otimes\ell)\oplus\Gamma(\wedge^{n-1} K^\ast\otimes\wedge^2\big(\widetilde{G}\big)^{*}\otimes\ell).
		\end{equation*}
		Then pick the component in $\Gamma(\wedge^n K^\ast\otimes\big(\widetilde{G}\big)^{*}\otimes\ell)$ and apply to it the VB morphism
		\begin{equation*}
			-\operatorname{id}\otimes(\sigma\circ\varpi^\flat|_{\widetilde{G}}^{-1})\colon\wedge^\bullet K^\ast\otimes\big(\big(\widetilde{G}\big)^{*}\otimes\ell\big)\longrightarrow\wedge^\bullet K^\ast\otimes G.
		\end{equation*}
		The result is the element $\sigma^{*}\left(\Phi_G(\eta)\right)\in\Omega^{n}(K,N\mathcal{F})$, which determines $\Phi_G(\eta)\in\Omega^{n}(\mathcal{F},N\mathcal{F})$.
	\end{remark}

	\begin{thm}
		\label{thm:first-order}
		Let $C\subset(M,\xi)$ be a compact regular coisotropic submanifold with characteristic foliation $\mathcal{F}$.
		A choice of splitting $TC=T\mathcal{F}\oplus G$ yields the local model $(U,\ker\gamma_G)$.
		Assume that $\eta_t\in\Gamma(U)$ is a smooth path of coisotropic sections, with $\eta_0=0$.
		\begin{enumerate}[label=(\arabic*)]
			\item
			\label{enumitem:thm:first-order:1}
			The infinitesimal deformation $\dt{\eta_0}$ is closed in $(\Omega^\bullet(\mathcal{F},\ell),d_{\nabla^\ell})$.
			\item 
			\label{enumitem:thm:first-order:2}
			Let $\mathcal{F}_t$ be the characteristic foliation of the pre-contact form $\theta_t:=\theta_C+\operatorname{pr}_{T\mathcal{F}}^\ast\eta_t$. For small enough $t$, there is a smooth path $\zeta_t\in\Omega^{1}(\mathcal{F},N\mathcal{F})$ such that $T\mathcal{F}_t=\text{graph}(\zeta_t)$. Moreover,
			\[
			\Phi_G(\dt\eta_0)=\dt\zeta_0.
			\]
			\item
			\label{enumitem:thm:first-order:3}
			If $\eta_t$ is a smooth path in $\text{Def}^U_{\mathcal{F}}(C)$, then $\Phi_G(\dt{\eta_0})$ is exact in $\big(\Omega^{\bullet}(\mathcal{F},N\mathcal{F}),d_{\nabla^{N\mathcal{F}}}\big)$.
		\end{enumerate}
	\end{thm}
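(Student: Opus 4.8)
The plan is to establish the three assertions in order, with \ref{enumitem:thm:first-order:2} carrying the weight and the other two following quickly from results already at hand. Part \ref{enumitem:thm:first-order:1} is nothing but Prop.~\ref{prop:first-order}: the path $\eta_t$ is a smooth path of coisotropic sections of $U$ with $\eta_0=0$, so its velocity $\dt{\eta_0}$ is automatically $d_{\nabla^\ell}$-closed.

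For part \ref{enumitem:thm:first-order:2}, I would work entirely in the Atiyah picture on $C$. By Lemma~\ref{lem:coiso-in-model}, $\theta_t:=\theta_C+\operatorname{pr}_{T\mathcal{F}}^\ast\eta_t$ is a smooth family of pre-contact forms of fixed rank $2n-k$; under Prop.~\ref{prop:relation-luca} these determine a smooth family of pre-symplectic Atiyah forms $\varpi_t\in\Omega^2_D(\ell)$ with $K_t:=\ker\varpi_t$ and $\sigma\colon K_t\xrightarrow{\sim}T\mathcal{F}_t$. The first observation I would record is the compatibility $\operatorname{pr}_K^\ast\sigma^\ast=\sigma^\ast\operatorname{pr}_{T\mathcal{F}}^\ast$, which holds because $\sigma$ intertwines the projection $\operatorname{pr}_K\colon D\ell\to K$ along $\widetilde{G}$ with $\operatorname{pr}_{T\mathcal{F}}\colon TC\to T\mathcal{F}$ along the matching complement $G$ of Lemma~\ref{lem:complement}. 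Combining this with the fact that the Atiyah $1$-form attached to an honest $\ell$-valued $1$-form is its symbol pullback, and with the linearity of the correspondence $\theta\mapsto\varpi$, I obtain upon differentiating at $t=0$ with $\eta_0=0$ the identity
\[
\dt{\varpi_0}=d_D(\operatorname{pr}_K^\ast\sigma^\ast\dt{\eta_0}).
\]
Because $C$ is compact and $D\ell=K\oplus\widetilde{G}$ at $t=0$, for small $t$ the kernel $K_t$ stays transverse to $\widetilde{G}$ and has constant rank, so I can write $K_t=\operatorname{graph}(\tilde\zeta_t)$ for a smooth path $\tilde\zeta_t\colon K\to\widetilde{G}$ with $\tilde\zeta_0=0$; applying $\sigma$ and the identification $G\cong N\mathcal{F}$ this is exactly the presentation $T\mathcal{F}_t=\operatorname{graph}(\zeta_t)$ claimed in the statement, with $\zeta_t\circ\sigma=\sigma\circ\tilde\zeta_t$.

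The heart of the argument, and the step I expect to be most delicate, is to differentiate the defining condition of $K_t$. For $\Delta\in\Gamma(K)$ the section $\Delta+\tilde\zeta_t(\Delta)$ lies in $\ker\varpi_t$, so $\varpi_t(\Delta+\tilde\zeta_t(\Delta),W)=0$ for all $W\in\Gamma(D\ell)$. Differentiating at $t=0$ and using $\tilde\zeta_0=0$ together with $\Delta\in\ker\varpi_0$ collapses this to $\varpi_0(\dt{\tilde\zeta}_0(\Delta),W)=-d_D(\operatorname{pr}_K^\ast\sigma^\ast\dt{\eta_0})(\Delta,W)$. Evaluating on $W=\varpi^\flat|_{\widetilde{G}}^{-1}(\sigma^\ast\beta)$ for $\beta\in\Gamma(N^\ast\mathcal{F}\otimes\ell)$ and comparing with Def.~\ref{def:Phi} I would read off $\langle\beta,\Phi_G(\dt{\eta_0})(\sigma\Delta)\rangle=\langle\beta,\sigma(\dt{\tilde\zeta}_0(\Delta))\rangle$; since $\beta$ and $\Delta$ are arbitrary and $\sigma\colon K\to T\mathcal{F}$ is onto, this yields $\Phi_G(\dt{\eta_0})=\sigma\circ\dt{\tilde\zeta}_0\circ\sigma^{-1}=\dt{\zeta_0}$, which is \ref{enumitem:thm:first-order:2}. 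The only points requiring care are the bookkeeping of the antisymmetry of $\varpi_0$ and of the minus sign produced by differentiating the kernel equation; these two signs cancel, so that $\Phi_G(\dt{\eta_0})$ comes out equal to $\dt{\zeta_0}$ with no residual sign.

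Finally, for part \ref{enumitem:thm:first-order:3} I would invoke the foliation side. If $\eta_t$ is a smooth path in $\text{Def}^{U}_{\mathcal{F}}(C)$, then by Def.~\ref{def:smoothpath} there is a smooth path of embeddings $\Phi_t$ with $p\circ\Phi_t\colon(C,\mathcal{F})\to(C,\mathcal{F}_{\eta_t})$ foliated; after replacing $\Phi_t$ by $\Phi_t\circ(p\circ\Phi_0)^{-1}$, which changes neither the graphs $\operatorname{graph}(\eta_t)$ nor the velocity $\dt{\eta_0}$, I may assume the isotopy $\phi_t:=p\circ\Phi_t$ satisfies $\phi_0=\operatorname{id}$ and $(\phi_t)_\ast T\mathcal{F}=T\mathcal{F}_t$. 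Thus $\mathcal{F}_t$ is generated by the isotopy $\phi_t$, so Lemma~\ref{lem:first-order-fol}\,(2) gives $\dt{\zeta_0}=d_{\nabla^{N\mathcal{F}}}(V_0\ \mathrm{mod}\ T\mathcal{F})$ with $V_t$ the generator of $\phi_t$. Combining this with part \ref{enumitem:thm:first-order:2} shows that $\Phi_G(\dt{\eta_0})=\dt{\zeta_0}$ is exact in $\big(\Omega^{\bullet}(\mathcal{F},N\mathcal{F}),d_{\nabla^{N\mathcal{F}}}\big)$, as desired.
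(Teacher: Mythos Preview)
Your proposal is correct and follows essentially the same route as the paper: part~\ref{enumitem:thm:first-order:1} via Prop.~\ref{prop:first-order}; part~\ref{enumitem:thm:first-order:2} by passing to the Atiyah picture, writing $\varpi_t=\varpi+d_D((\sigma\circ\operatorname{pr}_K)^\ast\eta_t)$, expressing $K_t=\operatorname{graph}(\mu_t)$ for small $t$ (your $\tilde\zeta_t$ is the paper's $\mu_t$), differentiating the kernel condition at $t=0$, and pairing with $\varpi^\flat|_{\widetilde{G}}^{-1}(\sigma^\ast\beta)$ to identify $\Phi_G(\dt\eta_0)$ with $\dt\zeta_0$; and part~\ref{enumitem:thm:first-order:3} by normalizing the foliated isotopy from Def.~\ref{def:smoothpath} so that $\phi_0=\operatorname{id}_C$ and invoking Lemma~\ref{lem:first-order-fol}. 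The sign bookkeeping you flag is handled in the paper exactly as you describe, and your intermediate observation $\operatorname{pr}_K^\ast\sigma^\ast=\sigma^\ast\operatorname{pr}_{T\mathcal{F}}^\ast$ is precisely what underlies the paper's formula~\eqref{eq:varpi}.
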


	\begin{proof}
		\ref{enumitem:thm:first-order:1}
		This holds by Prop.~\ref{prop:first-order}.
		
		\ref{enumitem:thm:first-order:2}
		The pre-contact form $\theta_t$  corresponds to a pre-symplectic Atiyah form $\varpi_t$, given by
		\begin{equation}\label{eq:varpi}
			\varpi_t:=d_D(\sigma^\ast\theta_t)=\varpi+d_D((\sigma\circ\operatorname{pr}_K)^\ast\eta_t).
		\end{equation}
		We set $K_t:=\ker\varpi_t$.
		Then the following conditions are equivalent:
		\begin{enumerate}[label=(\alph*)]
			\item
			\label{enumitem:proof:thm:first-order:2a}
			$T\mathcal{F}_t$ is transverse to $G$, i.e.~there exists $\zeta_t\in\Omega^1(\mathcal{F},G)$ such that $T\mathcal{F}_t=\text{graph}(\zeta_t)$, 
			\item
			\label{enumitem:proof:thm:first-order:2b}
			$K_t$ is transverse to $\widetilde{G}$, i.e.~there exists $\mu_t\in\Omega^1(K,\widetilde{G})$ such that $K_t=\text{graph}(\mu_t)$.
		\end{enumerate}
		Compactness of $C$ implies that there exists $\epsilon>0$ such that the equivalent conditions~\ref{enumitem:proof:thm:first-order:2a} and~\ref{enumitem:proof:thm:first-order:2b} hold for $-\epsilon<t<\epsilon$.
		In this case, $\zeta_t\in\Omega^1(\mathcal{F},G)$ and $\mu_t\in\Omega^1(K,\widetilde{G})$ are related by 
		\begin{equation}\label{eq:relation}
			\zeta_t\circ\sigma|_K=\sigma\circ\mu_t.
		\end{equation}
		We now compute for $\Delta\in\Gamma(K)$,
		\begin{equation*}
			0=\varpi_t^\flat(\Delta+\mu_t(\Delta))=\varpi^\flat(\mu_t(\Delta))+(d_D((\sigma\circ\operatorname{pr}_K)^\ast\eta_t))^\flat(\Delta+\mu_t(\Delta)),
		\end{equation*}
		where we used \eqref{eq:varpi} and the fact that $K=\ker\varpi$.
		Differentiating at time $t=0$ and using that $\eta_0=0$ and $\mu_0=0$, we get 
		\begin{equation*}
			0=\varpi^\flat(\dt{\mu_0}(\Delta))+(d_D((\sigma\circ\operatorname{pr}_K)^\ast\dt{\eta}_0))^\flat(\Delta).
		\end{equation*}
		With Def.~\ref{def:Phi} in mind, this implies that for any $\Delta\in\Gamma(K)$ and $\beta\in\Gamma(G^{*}\otimes\ell)$,
		\begin{align*}
			0&=\left\langle\varpi^\flat(\dt{\mu_0}(\Delta)),\varpi^\flat|_{\widetilde{G}}^{-1}(\sigma^\ast\beta)\right\rangle+\left\langle(d_D((\sigma\circ\operatorname{pr}_K)^\ast\dt{\eta}_0))^\flat(\Delta),\varpi^\flat|_{\widetilde{G}}^{-1}(\sigma^\ast\beta)\right\rangle\\
			&=-\left\langle\sigma^\ast\beta,\dt{\mu_0}(\Delta)\right\rangle+\left\langle\beta,\Phi_{G}(\dt{\eta}_0)(\sigma(\Delta))\right\rangle\\
			&=-\left\langle\beta,\sigma(\dt{\mu_0}(\Delta))\right\rangle+\left\langle\beta,\Phi_{G}(\dt{\eta}_0)(\sigma(\Delta))\right\rangle\\
			&=-\big\langle\beta,\dt{\zeta_0}(\sigma(\Delta))\big\rangle+\left\langle\beta,\Phi_{G}(\dt{\eta}_0)(\sigma(\Delta))\right\rangle\\
			&=\left\langle\beta,\big(\Phi_G(\dt\eta_0)-\dt\zeta_0\big)(\sigma(\Delta))\right\rangle,
		\end{align*}
		using \eqref{eq:relation} in the fourth equality.
		This shows that $\Phi_G(\dt{\eta}_0)=\dt\zeta_0$.
		
		\ref{enumitem:thm:first-order:3} 
		By part $(2)$, we only have to show that $\dt\zeta_0$ is exact in $(\Omega^\bullet(\mathcal{F},N\mathcal{F}),d_{\nabla^{N\mathcal{F}}})$.
		Since $\eta_t$ is a smooth path in $\text{Def}_\mathcal{F}^U(C)$, according to Def.~\ref{def:smoothpath} there exists a smooth family of foliated diffeomorphisms
		\begin{equation*}
			\phi_t\colon(C,T\mathcal{F})\longrightarrow(C,T\mathcal{F}_t).
		\end{equation*}
		Precomposing $\phi_t$ with $\phi_0^{-1}$, we can assume that $\phi_0=\operatorname{id}_C$, so that the smooth family of foliations $\mathcal{F}_t$ is generated by the isotopy $\phi_t$. As $T\mathcal{F}_t=\operatorname{graph}(\zeta_t)$, Lemma~\ref{lem:first-order-fol} implies that $\dt{\zeta}_0$ is exact in $(\Omega^\bullet(\mathcal{F},N\mathcal{F}),d_{\nabla^{N\mathcal{F}}})$. This finishes the proof.
	\end{proof}
	
	Thm.~\ref{thm:first-order} motivates the following (provisional) definition.
	
	\begin{defi}[Provisional]
		\label{def:first-order-defs}
		Let $C\subset(M,\xi)$ be a compact regular coisotropic submanifold with characteristic foliation $\mathcal{F}$.
		When deforming $C$ inside $\text{Def}_{\mathcal{F}}(C)$, a \textbf{first order deformation} of $C$ is an element $\eta\in\Omega^{1}(\mathcal{F},\ell)$ such that
		\begin{equation*}
			d_{\nabla^{\ell}}\eta=0\quad \text{and}\quad \Phi_G(\eta)\ \text{is exact in}\ (\Omega^\bullet(\mathcal{F},N\mathcal{F}),d_{\nabla^{N\mathcal{F}}}).
		\end{equation*}
	\end{defi}
	
	In what follows, we study the map $\Phi_G$ in more detail.
	Section~\ref{sec:chain_map} gives an alternative description for it, which will allow us to prove that $\Phi_G$ is a chain map from $(\Omega^\bullet(\mathcal{F},\ell),d_{\nabla^\ell})$ to $(\Omega^\bullet(\mathcal{F},N\mathcal{F}),d_{\nabla^{N\mathcal{F}}})$.
	In Section~\ref{sec:induced_map_cohomology}, we show that the induced map in cohomology is canonical, i.e.~$[\Phi_G]\colon H^\bullet(\mathcal{F},\ell)\to H^\bullet(\mathcal{F},N\mathcal{F})$ does not depend on the complement $G$. In particular, this yields an invariant description of first order deformations which refines Def.~\ref{def:first-order-defs}.

	\subsection{A chain map}
	\label{sec:chain_map}
		Assume the setup described in the box at the beginning of \S\ref{sec:description_1st_order_deformations}. To show that $\Phi_G:(\Omega^\bullet(\mathcal{F},\ell),d_{\nabla^\ell})\to(\Omega^\bullet(\mathcal{F},N\mathcal{F}),d_{\nabla^{N\mathcal{F}}})$ is a chain map, we proceed in 2 steps:
		\begin{enumerate}
			\item Construct a canonical chain map 
			\begin{equation}\label{eq:step1}
				\left(\Omega^\bullet(K,J^1_\perp\ell),d_{\nabla^{J^1_\perp\ell}}\right)\longrightarrow\big(\Omega^\bullet(\mathcal{F},N\mathcal{F}),d_{\nabla^{N\mathcal{F}}}\big).
			\end{equation}
			Here $J^{1}_{\perp}\ell$ is a suitable subbundle of $J^{1}\ell$ to be introduced later.
			\item Construct a chain map
			\begin{equation}\label{eq:step2}
				\big(\Omega^{\bullet}(\mathcal{F},\ell),d_{\nabla^{\ell}}\big)\rightarrow\left(\Omega^\bullet(K,J^1_\perp\ell),d_{\nabla^{J^1_\perp\ell}}\right)
			\end{equation}
			which depends on the choice of complement $G$.
		\end{enumerate}
		We then show that $\Phi_G$ is the composition of these two maps, hence it is also a chain map.

	\subsubsection{Step 1 in construction of $\Phi_G$}
	
	To construct the map \eqref{eq:step1}, we start by remarking that the Lie algebroid isomorphism $\sigma:K\rightarrow T\mathcal{F}$ relates some natural representations carried by these Lie algebroids.
	On one hand,  $T\mathcal{F}$ carries the following representations.
	\begin{itemize}
		\item The $T\mathcal{F}$-representation $\nabla^\ell$ on $\ell$ given by
		\begin{equation*}
			\nabla^\ell_X(\theta_C(Y))=\theta_C([X,Y]),\quad\text{for}\ X\in\Gamma(T\mathcal{F})\ \text{and}\ Y\in\mathfrak{X}(C).
		\end{equation*}
		Alternatively, $\nabla^\ell$ can be characterized by the following commutative diagram
		\begin{equation}
			\label{eq:nabla^ell}
			\begin{tikzcd}
				K\arrow[rr, hook, "\text{incl}"]\arrow[d, hook, two heads, swap, "\sigma"]&& D\ell\\
				T\mathcal{F}\arrow[urr, dashed, swap, "\nabla^\ell"]&&
			\end{tikzcd}.
		\end{equation}
		To see this, we compute for $\Delta\in\Gamma(K)$ and $\square\in\Gamma(D\ell)$,
		\begin{align*}
			0&=\varpi(\Delta,\square)\\
			&=(d_D(\sigma^\ast\theta_C))(\Delta,\square)\\
			&=\Delta(\theta_C(\sigma(\square)))-\cancel{\square(\theta_C(\sigma(\Delta)))}-\theta_C([\sigma(\Delta),\sigma(\square)]).
		\end{align*}
		\item The Bott representation $\nabla^{N\mathcal{F}}$ of $T\mathcal{F}$ on $N\mathcal{F}$ given by
		\begin{equation*}
			\nabla^{N\mathcal{F}}_X(Y\ \text{mod}\ T\mathcal{F})=[X,Y]\ \text{mod}\ T\mathcal{F},\quad\text{for}\ X\in\Gamma(T\mathcal{F}),\ Y\in\mathfrak{X}(C).
		\end{equation*} 
	\end{itemize}
	
	The Lie algebroid $K$ carries representations on $\ell$ and on vector bundles $D_\perp\ell$ and $J^1_\perp\ell$ which we now introduce. We first define $D_\perp\ell:=D\ell/K$. Since $\sigma:D\ell\rightarrow TC$ maps $K$ bijectively onto $T\mathcal{F}$, it descends to a VB morphism
	\begin{equation*}
		\sigma_\perp\colon D_\perp\ell\longrightarrow N\mathcal{F},\ \Delta\ \text{mod}\ K\longmapsto\sigma(\Delta)\ \text{mod}\ T\mathcal{F}.
	\end{equation*}
	Next, we define $J^1_\perp\ell\subset J^1\ell\simeq(D\ell)^\ast\otimes\ell$ as the annihilator of $K\subset D\ell$, i.e.
	\begin{equation*}
		J^1_\perp\ell:=\sqcup_{x\in C}\{j^1_x\lambda\,\mid\lambda\in\Gamma(\ell)\ \text{such that}\ \delta(\lambda)=0,\ \text{for all}\ \delta\in K_x\}.
	\end{equation*}
	Note that we have an identification
	\begin{equation}\label{eq:iden}
		(D_\perp\ell)^\ast\otimes\ell\overset{\sim}{\longrightarrow} J^1_\perp\ell.
	\end{equation}
	The Lie algebroid $K$ now carries the following natural representations.
	\begin{itemize}
		\item The tautological representation of $D\ell$ on $\ell$ restricts to a representation of $K$, namely
		\[
		\nabla^{\ell}:K\rightarrow D\ell:\Delta\mapsto\Delta.
		\] 
		Because of the commutative diagram~\eqref{eq:nabla^ell}, using the symbol $\nabla^\ell$ to denote both the $T\mathcal{F}$- and $K$-representation on $\ell$ should not lead to any confusion. 
		\item The Bott representation $\nabla^{D_\perp\ell}$ of $K$ on $D_\perp\ell$ given by
		\begin{equation*}
			\nabla^{D_\perp\ell}_\Delta(\square\ \text{mod}\ K)=[\Delta,\square]\ \text{mod}\ K,\quad\text{for}\ \Delta\in\Gamma(K),\ \square\in\Gamma(D\ell).
		\end{equation*} 
		\item The $\ell$-twisted dual Bott representation of $K$ on $J^1_\perp\ell$, which is defined by
		\begin{equation}
			\label{eq:K_representation_J^1_perp ell}
			\left\langle\nabla^{J^1_\perp\ell}_\Delta\alpha,\square\ \text{mod}\ K\right\rangle=\nabla_\Delta^\ell\langle\alpha,\square\ \text{mod}\ K\rangle-\left\langle\alpha,\nabla^{D_\perp\ell}_\Delta(\square\ \text{mod}\ K)\right\rangle,
		\end{equation}
		for $\Delta\in\Gamma(K), \alpha\in\Gamma(J^1_\perp\ell)$ and $\square\in\Gamma(D\ell)$. Here we use the identification \eqref{eq:iden}. Viewing sections of $J^1_\perp\ell$ as elements of $\Gamma(K^{\circ}\otimes\ell)\subset\Gamma((D\ell)^{*}\otimes\ell)$, it follows from Cartan calculus with Atiyah forms $\Omega_D^\bullet(\ell)$ that  
		\begin{equation*}
			\nabla_\Delta^{J^1_\perp\ell}\alpha=\mathcal{L}_\Delta\alpha.
		\end{equation*}
	\end{itemize}
	
	It is clear that under the Lie algebroid isomorphism  $\sigma:K\overset{\sim}{\rightarrow}T\mathcal{F}$, the representations of $T\mathcal{F}$ and $K$ recalled above are related as follows.
	\begin{itemize}
		\item The identity map $\text{id}_\ell\colon\ell\to\ell$ intertwines the representations of $K$ and $T\mathcal{F}$ on $\ell$, i.e.
		\begin{equation*}
			\nabla^\ell_\Delta\lambda=\nabla^\ell_{\sigma(\Delta)}\lambda,\quad\text{for}\ \Delta\in\Gamma(K)\ \text{and}\ \lambda\in\Gamma(\ell).
		\end{equation*}
		\item The VB morphism $\sigma_\perp\colon D_\perp\ell\to N\mathcal{F}$ intertwines the $K$-representation on $D_\perp\ell$ and the $T\mathcal{F}$-representation on $N\mathcal{F}$, i.e. for $\Delta\in\Gamma(K)$ and $\square\in\Gamma(D\ell)$,
		\begin{equation}\label{eq:comp-connections}
			\sigma_\perp(\nabla^{D_\perp\ell}_\Delta(\square\ \text{mod}\ K))=\nabla^{N\mathcal{F}}_{\sigma(\Delta)}(\sigma(\square)\ \text{mod}\ T\mathcal{F}).
		\end{equation}
	\end{itemize}
	
	The $K$-representations on $D_\perp\ell$ and $J^1_\perp\ell$ are also related, as we show now. Recall that we are given an $\ell$-valued pre-symplectic Atiyah form $\varpi\in\Gamma(\wedge^{2}(D\ell)^{*}\otimes\ell)$ with $\ker\varpi^{\flat}=K$. It follows that 
	\[
	J^1_\perp\ell=\operatorname{im}\big\{\varpi^\flat\colon D\ell\to (D\ell)^\ast\otimes\ell\simeq J^1\ell\big\},
	\]
	and moreover, $\varpi$ descends to a non-degenerate form $\varpi_\perp\in\Gamma(\wedge^2(D_\perp\ell)^\ast\otimes\ell)$ which makes the following diagram commute
	\begin{equation*}
		\begin{tikzcd}
			D\ell\arrow[rr, "\varpi^\flat"]\arrow[d, two heads]&&J^1\ell\\
			D_\perp\ell\arrow[rr, hook, two heads, dashed, swap, "\varpi_\perp^\flat"]&&J^1_\perp\ell\arrow[u, hook, swap, "\text{incl}"].
		\end{tikzcd}.
	\end{equation*}
	We will denote the inverse of $\varpi^\flat_\perp\colon D_\perp\ell\overset{\sim}{\longrightarrow} J^1_\perp\ell$ by $\varpi_\perp^\sharp\colon J^1_\perp\ell\overset{\sim}{\longrightarrow} D_\perp\ell$.

	\begin{prop}
		\label{prop:sigma_perp_omega_perp}
		The VB isomorphism $\varpi_\perp^\flat\colon D_\perp\ell\to J^1_\perp\ell$ intertwines the $K$-representations on $D_\perp\ell$ and $J^1_\perp\ell$, i.e. for $\Delta\in\Gamma(K)$ and $\square\in\Gamma(D\ell)$ we have
		\begin{equation*}
			\varpi_\perp^\flat(\nabla^{D_\perp\ell}_\Delta(\square\ \text{mod}\ K))=\nabla^{J^1_\perp\ell}_\Delta(\varpi_\perp^\flat(\square\ \text{mod}\ K)).
		\end{equation*}
	\end{prop}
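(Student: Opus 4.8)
The plan is to establish the intertwining relation by a direct computation, using the fact that the $K$-representations on $D_\perp\ell$ and $J^1_\perp\ell$ are both Bott-type (respectively dual Bott-type) representations, and that $\varpi$ is $K$-invariant in the appropriate sense. The cleanest route exploits the identification of $\nabla^{J^1_\perp\ell}_\Delta$ with the Lie derivative $\mathcal{L}_\Delta$ on Atiyah forms, as recorded in the displayed formula preceding the statement. I would first unwind both sides by pairing them against an arbitrary $\square'\ \text{mod}\ K\in\Gamma(D_\perp\ell)$ via the nondegenerate form $\varpi_\perp$; since $\varpi_\perp^\flat$ is an isomorphism, it suffices to check the equality after this pairing.

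The key identity driving the proof is that $\varpi$ is closed and $K=\ker\varpi^\flat$, so that $\mathcal{L}_\Delta\varpi = d_D\iota_\Delta\varpi + \iota_\Delta d_D\varpi = 0$ for $\Delta\in\Gamma(K)$, using $\iota_\Delta\varpi=0$ and $d_D\varpi=0$. Concretely, I would compute for $\Delta,\square,\square'\in\Gamma(D\ell)$ with $\Delta\in\Gamma(K)$:
\begin{align*}
	0=(\mathcal{L}_\Delta\varpi)(\square,\square')&=\nabla^\ell_\Delta\big(\varpi(\square,\square')\big)-\varpi([\Delta,\square],\square')-\varpi(\square,[\Delta,\square']).
\end{align*}
Since $\varpi$ factors through $\varpi_\perp$ on $D_\perp\ell$, this relation translates directly into the statement that $\nabla^{J^1_\perp\ell}_\Delta$ and $\nabla^{D_\perp\ell}_\Delta$ correspond under $\varpi_\perp^\flat$. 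More precisely, pairing $\varpi_\perp^\flat(\nabla^{D_\perp\ell}_\Delta(\square\ \text{mod}\ K))$ with $\square'\ \text{mod}\ K$ yields $\varpi([\Delta,\square],\square')$, while expanding $\nabla^{J^1_\perp\ell}_\Delta(\varpi_\perp^\flat(\square\ \text{mod}\ K))$ using the defining formula~\eqref{eq:K_representation_J^1_perp ell} of the dual Bott representation gives $\nabla^\ell_\Delta(\varpi(\square,\square'))-\varpi(\square,[\Delta,\square'])$. The two agree precisely by the invariance identity above.

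The main obstacle I anticipate is bookkeeping rather than conceptual: one must carefully track the three representations $\nabla^\ell$, $\nabla^{D_\perp\ell}$, and $\nabla^{J^1_\perp\ell}$ and ensure that the identification $J^1_\perp\ell\cong(D_\perp\ell)^*\otimes\ell$ from~\eqref{eq:iden} is applied consistently, so that the pairing $\langle\varpi_\perp^\flat(\square\ \text{mod}\ K),\square'\ \text{mod}\ K\rangle=\varpi(\square,\square')$ is well-defined independently of the chosen lifts $\square,\square'\in\Gamma(D\ell)$. Well-definedness follows because $\varpi$ annihilates $K$ in both slots. Once this is in place, the computation is a one-line consequence of $\mathcal{L}_\Delta\varpi=0$, and no choice of complement $G$ enters, confirming that the relation is canonical.
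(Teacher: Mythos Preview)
Your proposal is correct and follows essentially the same argument as the paper: both establish $\mathcal{L}_\Delta\varpi=0$ via the Cartan formula (using $d_D\varpi=0$ and $\iota_\Delta\varpi=0$), expand $(\mathcal{L}_\Delta\varpi)(\square,\square')=0$ explicitly, and then rewrite the resulting three-term identity in terms of $\varpi_\perp^\flat$, $\nabla^{D_\perp\ell}$, and the defining formula~\eqref{eq:K_representation_J^1_perp ell} for $\nabla^{J^1_\perp\ell}$. Your remark that well-definedness of the pairing follows from $K=\ker\varpi$ is exactly the observation underlying the paper's passage from $\varpi$ to $\varpi_\perp$.
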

	
	\begin{proof}
		Since $d_D\varpi=0$ and $K=\ker\varpi$, the Cartan formula 
		implies that for $\Delta\in\Gamma(K)$, $$\mathcal{L}_\Delta\varpi=\iota_\Delta d_D\varpi+d_D\iota_\Delta\varpi=0.$$
		Hence, we can compute for any $\Delta\in\Gamma(K)$ and $\square,\square'\in\Gamma(D\ell)$, 
		\begin{align*}
			0&=(\mathcal{L}_\Delta\varpi)(\square,\square')\\
			&=\Delta(\varpi(\square,\square'))-\varpi([\Delta,\square],\square')-\varpi(\square,[\Delta,\square'])\\
			&=\nabla^\ell_\Delta\left\langle\varpi_\perp^\flat(\square\ \text{mod}\ K),\square'\ \text{mod}\ K\right\rangle-\left\langle\varpi_\perp^\flat\big(\nabla^{D_\perp\ell}_\Delta(\square\ \text{mod}\ K)\big),\square'\ \text{mod}\ K\right\rangle\\
			&\phantom{=}-\left\langle\varpi_\perp^\flat(\square\ \text{mod}\ K),\nabla^{D_\perp\ell}_\Delta(\square'\ \text{mod}\ K)\right\rangle\\
			&=\left\langle\nabla^{J^1_\perp\ell}_\Delta(\varpi_\perp^\flat(\square\ \text{mod}\ K)),\square'\ \text{mod}\ K\right\rangle-\left\langle\varpi_\perp^\flat\big(\nabla^{D_\perp\ell}_\Delta(\square\ \text{mod}\ K)\big),\square'\ \text{mod}\ K\right\rangle.
		\end{align*}
		This shows that $$\nabla^{J^1_\perp\ell}_\Delta(\varpi_\perp^\flat(\square\ \text{mod}\ K))=\varpi_\perp^\flat\big(\nabla^{D_\perp\ell}_\Delta(\square\ \text{mod}\ K)\big)$$ for all $\Delta\in\Gamma(K)$ and $\square\in\Gamma(D\ell)$, as we wanted to prove.
	\end{proof}
	
	Combining the result \eqref{eq:comp-connections} and Prop.~\ref{prop:sigma_perp_omega_perp}, we obtain the following.

	\begin{cor}
		\label{cor:sigma_perp_omega_perp}
		Under the Lie algebroid isomorphism $\sigma:K\overset{\sim}{\to} T\mathcal{F}$, the VB morphism $\sigma_\perp\circ\varpi_\perp^\sharp\colon J^1_\perp\ell\to N\mathcal{F}$ intertwines the $K$-representation on $J^1_\perp\ell$ and the $T\mathcal{F}$-representation on $N\mathcal{F}$. That is, for any $\Delta\in\Gamma(K)$ and $\alpha\in\Gamma(J^1_\perp\ell)$,
		\begin{equation*}
			(\sigma_\perp\circ\varpi_\perp^\sharp)\big(\nabla^{J^1_\perp\ell}_\Delta\alpha\big)=\nabla^{N\mathcal{F}}_{\sigma(\Delta)}((\sigma_\perp\circ\varpi_\perp^\sharp)\alpha).
		\end{equation*}
	\end{cor}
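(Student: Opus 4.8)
The plan is to prove the intertwining identity by factoring the map $\sigma_\perp\circ\varpi_\perp^\sharp$ through $D_\perp\ell$ and treating each of the two factors separately, since both $\varpi_\perp^\flat$ and $\sigma_\perp$ have already been shown to intertwine the relevant representations. Thus there is essentially nothing new to compute: the statement is a formal composition of Prop.~\ref{prop:sigma_perp_omega_perp} with \eqref{eq:comp-connections}.

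First I would record that the inverse isomorphism $\varpi_\perp^\sharp\colon J^1_\perp\ell\to D_\perp\ell$ intertwines the $K$-representations on $J^1_\perp\ell$ and $D_\perp\ell$, which is the mirror image of Prop.~\ref{prop:sigma_perp_omega_perp}. Concretely, given $\alpha\in\Gamma(J^1_\perp\ell)$ and $\Delta\in\Gamma(K)$, I set $\square\ \text{mod}\ K:=\varpi_\perp^\sharp\alpha$, so that $\varpi_\perp^\flat(\square\ \text{mod}\ K)=\alpha$. Substituting into the identity of Prop.~\ref{prop:sigma_perp_omega_perp} gives $\varpi_\perp^\flat\big(\nabla^{D_\perp\ell}_\Delta(\varpi_\perp^\sharp\alpha)\big)=\nabla^{J^1_\perp\ell}_\Delta\alpha$, and applying $\varpi_\perp^\sharp$ to both sides yields
\[
\varpi_\perp^\sharp\big(\nabla^{J^1_\perp\ell}_\Delta\alpha\big)=\nabla^{D_\perp\ell}_\Delta(\varpi_\perp^\sharp\alpha).
\]
This is simply the general fact that the inverse of a representation-intertwining VB isomorphism is again intertwining.

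Next I would invoke \eqref{eq:comp-connections}, which says that $\sigma_\perp$ intertwines $\nabla^{D_\perp\ell}$ with the Bott representation $\nabla^{N\mathcal{F}}$ along $\sigma\colon K\xrightarrow{\sim}T\mathcal{F}$. Applying $\sigma_\perp$ to the output of the previous step and using \eqref{eq:comp-connections} with $\square\ \text{mod}\ K=\varpi_\perp^\sharp\alpha$ produces
\[
(\sigma_\perp\circ\varpi_\perp^\sharp)\big(\nabla^{J^1_\perp\ell}_\Delta\alpha\big)=\sigma_\perp\big(\nabla^{D_\perp\ell}_\Delta(\varpi_\perp^\sharp\alpha)\big)=\nabla^{N\mathcal{F}}_{\sigma(\Delta)}\big((\sigma_\perp\circ\varpi_\perp^\sharp)\alpha\big),
\]
which is exactly the claimed identity.

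I do not expect any genuine obstacle, since both intertwining facts being combined are already established. The only point deserving a moment's care is the direction reversal when passing from $\varpi_\perp^\flat$ to $\varpi_\perp^\sharp$ in the first step; this is harmless precisely because intertwining of Lie algebroid representations is preserved under inverting a vector bundle isomorphism, so no additional hypothesis on $\varpi$ is needed beyond its non-degeneracy on $D_\perp\ell$, already used to define $\varpi_\perp^\sharp$.
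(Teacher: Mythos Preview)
Your proposal is correct and matches the paper's approach exactly: the paper simply states that the corollary follows by combining \eqref{eq:comp-connections} with Prop.~\ref{prop:sigma_perp_omega_perp}, and you have spelled out precisely that composition, including the routine step of inverting the intertwining isomorphism $\varpi_\perp^\flat$.
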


	Cor.~\ref{cor:sigma_perp_omega_perp} yields a canonical chain map which is the first step in the construction of $\Phi_G$.
	
	\begin{cor}\label{cor:chain1}
		The VB morphism $$\wedge^\bullet(\sigma|_K^{-1})^{*}\otimes(\sigma_\perp\circ\varpi_\perp^\sharp)\colon\wedge^\bullet K^\ast\otimes J^1_\perp\ell\longrightarrow\wedge^\bullet T^\ast\mathcal{F}\otimes N\mathcal{F}$$ 
		induces a chain map at the level of sections
		\begin{equation*}
			\left(\Omega^\bullet(K,J^1_\perp\ell),d_{\nabla^{J^1_\perp\ell}}\right)\longrightarrow\left(\Omega^\bullet(\mathcal{F},N\mathcal{F}),d_{\nabla^{N\mathcal{F}}}\right).
		\end{equation*}
	\end{cor}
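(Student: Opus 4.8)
The plan is to verify the chain-map property by a direct computation with the Koszul formula \eqref{eq:diff}, exploiting the two structural facts already in hand: that $\sigma\colon K\to T\mathcal{F}$ is a Lie algebroid isomorphism (hence preserves anchors and brackets), and that the VB morphism $\sigma_\perp\circ\varpi_\perp^\sharp\colon J^1_\perp\ell\to N\mathcal{F}$ intertwines the $K$- and $T\mathcal{F}$-representations, as established in Cor.~\ref{cor:sigma_perp_omega_perp}. Write $\Psi:=\sigma_\perp\circ\varpi_\perp^\sharp$ and let $T$ denote the map on sections induced by $\wedge^\bullet(\sigma|_K^{-1})^{*}\otimes\Psi$. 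Since $\sigma\colon K\to T\mathcal{F}$ is a fiberwise isomorphism, every section of $T\mathcal{F}$ equals $\sigma(\Delta)$ for a unique $\Delta\in\Gamma(K)$, so a form in $\Omega^\bullet(\mathcal{F},N\mathcal{F})$ is determined by its values on arguments of this type. For $\omega\in\Omega^n(K,J^1_\perp\ell)$ and $\Delta_1,\ldots,\Delta_n\in\Gamma(K)$, the definition of $T$ unwinds to
\[
(T\omega)\big(\sigma(\Delta_1),\ldots,\sigma(\Delta_n)\big)=\Psi\big(\omega(\Delta_1,\ldots,\Delta_n)\big).
\]

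First I would expand $d_{\nabla^{N\mathcal{F}}}(T\omega)$ on arguments $\sigma(\Delta_0),\ldots,\sigma(\Delta_n)$ using the Koszul formula \eqref{eq:diff}. In the derivation sum, each term reads $\nabla^{N\mathcal{F}}_{\sigma(\Delta_i)}\big(\Psi(\omega(\ldots))\big)$, which by Cor.~\ref{cor:sigma_perp_omega_perp} equals $\Psi\big(\nabla^{J^1_\perp\ell}_{\Delta_i}(\omega(\ldots))\big)$. In the bracket sum, I would use that $\sigma$ is a Lie algebroid morphism to rewrite $[\sigma(\Delta_i),\sigma(\Delta_j)]=\sigma([\Delta_i,\Delta_j])$, so that each term becomes $\Psi\big(\omega([\Delta_i,\Delta_j],\ldots)\big)$. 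Factoring the linear map $\Psi$ out of both sums then collapses the whole expression to $\Psi\big((d_{\nabla^{J^1_\perp\ell}}\omega)(\Delta_0,\ldots,\Delta_n)\big)$, which is exactly $\big(T(d_{\nabla^{J^1_\perp\ell}}\omega)\big)\big(\sigma(\Delta_0),\ldots,\sigma(\Delta_n)\big)$. This yields $d_{\nabla^{N\mathcal{F}}}\circ T=T\circ d_{\nabla^{J^1_\perp\ell}}$, as required.

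Conceptually, this is an instance of the general principle that a Lie algebroid isomorphism together with a representation morphism covering it induces a chain map of the associated de Rham complexes with coefficients; here the representation morphism is furnished by Cor.~\ref{cor:sigma_perp_omega_perp}, and the identification is especially clean because $\sigma$ is an isomorphism rather than a mere morphism. I do not expect a genuine obstacle, since both inputs are already available; the only care required is bookkeeping, namely keeping the covector (form-degree) part handled by $(\sigma|_K^{-1})^{*}$ consistent with the coefficient part handled by $\Psi$, and confirming that evaluation on arguments $\sigma(\Delta)$ suffices to determine the image form. The mild point worth double-checking is the interplay of signs in \eqref{eq:diff}, but since $\Psi$ is linear and pulls uniformly through every term, no sign discrepancy can occur.
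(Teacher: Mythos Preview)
Your proposal is correct and follows exactly the approach the paper takes: the paper states Cor.~\ref{cor:chain1} as an immediate consequence of Cor.~\ref{cor:sigma_perp_omega_perp}, relying on the general fact that a Lie algebroid isomorphism together with an intertwiner of representations induces a chain map between the de Rham complexes with coefficients. You have simply made this implicit step explicit via the Koszul formula, which is entirely in line with the paper's reasoning.
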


	\subsubsection{Step 2 in construction of $\Phi_G$}
	The construction of the map \eqref{eq:step2} requires a choice of complement $TC=T\mathcal{F}\oplus G$. By Lemma~\ref{lem:complement}, there is an induced complement $D\ell=K\oplus\widetilde{G}$. Denoting by $\widetilde{G}^\dagger:=(\widetilde{G})^{*}\otimes\ell$ the $\ell$-twisted dual of $\widetilde{G}$, we will make the following identifications:
	\[
	D_\perp\ell\simeq\widetilde{G}\quad\text{and}\quad  J^1_\perp\ell\simeq \widetilde{G}^\dagger.
	\]
	The VB isomorphism 
	\begin{equation*}
		\wedge^i(D\ell)^\ast\otimes\ell\simeq\bigoplus_{a+b=i}\wedge^a(D_\perp\ell)^\ast\otimes\wedge^b K^\ast\otimes\ell
	\end{equation*}
	induces an isomorphism of $C^\infty(C)$-modules
	\begin{equation}\label{eq:bigrading}
		\Omega_D^i(\ell)\simeq\bigoplus_{a+b=i}\Omega^{a,b}_D(\ell),\quad\text{where}\ \ \Omega_D^{a,b}(\ell):=\Gamma(\wedge^a(D_\perp\ell)^\ast\otimes\wedge^b K^\ast\otimes\ell).
	\end{equation}
	Since $K\subset D\ell$ is involutive, the Koszul formula for the der-differential $d_D$ implies that
	\begin{equation*}
		d_D\Omega_D^{a,b}(\ell)\subseteq\Omega_D^{a,b+1}(\ell)\oplus\Omega_D^{a+1,b}(\ell)\oplus\Omega_D^{a+2,b-1}(\ell). 
	\end{equation*}
	Therefore, $d_D$ decomposes as
	\begin{equation}
		\label{eq:decomposition_d_D}
		d_D=d_{0,1}^G+d_{1,0}^G+d_{2,-1}^G,
	\end{equation}
	where each summand is bi-homogeneous with respect to the bi-grading $\Omega^{\bullet,\bullet}_D(\ell)$, i.e.
	\begin{equation*}
		d^{G}_{0,1}\colon\Omega_D^{\bullet,\bullet}(\ell)\to\Omega_D^{\bullet,\bullet+1}(\ell),\quad d^{G}_{1,0}\colon\Omega_D^{\bullet,\bullet}(\ell)\to\Omega_D^{\bullet+1,\bullet}(\ell),\quad
		d^{G}_{2,-1}\colon\Omega_D^{\bullet,\bullet}(\ell)\to\Omega_D^{\bullet+2,\bullet-1}(\ell).
	\end{equation*}
	For our aims, we will only need the components $d_{0,1}^G$ and $d_{1,0}^G$. They are explicitly given by the following expressions, similar to the ones found in~\cite[Chapter 4]{tondeur}: 
	\begin{align}
		\label{eq:d_1,0}
		(d_{1,0}^G\eta)(\overline{\square}_1,&\ldots,\overline{\square}_{a+1},\Delta_1,\ldots,\Delta_b)\\
		&=\sum_{i=1}^{a+1}(-1)^{i+1}
		\square_i(\eta(\overline{\square}_1,\ldots,\widehat{\overline{\square}}_i,\ldots,\overline{\square}_{a+1},\Delta_1,\ldots,\Delta_b))\nonumber\\
		&\hspace{0.2cm}+\sum_{1\leq i<j\leq a+1}(-1)^{i+j}\eta(\overline{[\square_i,\square_j]},\overline{\square}_1,\ldots,\widehat{\overline{\square}}_i,\ldots,\widehat{\overline{\square}}_j,\ldots,\overline{\square}_{a+1},\Delta_1,\ldots,\Delta_b)\nonumber\\
		&\hspace{0.2cm}+\sum_{i=1}^{a+1}\sum_{j=1}^b(-1)^{i+j+1}\eta(\overline{\square}_1,\ldots,\widehat{\overline{\square}}_i,\ldots,\overline{\square}_{a+1},\operatorname{pr}_K[\square_i,\Delta_j],\Delta_1,\ldots,\widehat{\Delta}_j,\ldots,\Delta_b),\nonumber
	\end{align}
	\begin{align}
		\label{eq:d_0,1}
		(d_{0,1}^G\eta)(\overline{\square}_1,&\ldots,\overline{\square}_a,\Delta_1,\ldots,\Delta_{b+1})
		\\
		&=\sum_{i=1}^{b+1}(-1)^{a+i+1}\Delta_i(\eta(\overline{\square}_1,\ldots,\overline{\square}_a,\Delta_1,\dots,\widehat{\Delta}_i,\ldots,\Delta_{b+1}))\nonumber\\
		&\hspace{0.2cm}+\sum_{i=1}^{a}\sum_{j=1}^{b+1}(-1)^{i+a+j}\eta(\overline{[\square_i,\Delta_j]},\overline{\square}_1,\ldots,\widehat{\overline{\square}}_i,\ldots,\overline{\square}_a,\Delta_1,\ldots,\widehat{\Delta}_j,\ldots,\Delta_{b+1})\nonumber\\
		&\hspace{0.2cm}+\sum_{1\leq i<j\leq b+1}(-1)^{i+a+j}\eta(\overline{\square}_1,\ldots,\overline{\square}_a,[\Delta_i,\Delta_j],\Delta_1,\ldots,\widehat{\Delta}_i,\ldots,\widehat{\Delta}_j,\ldots,\Delta_{b+1}),\nonumber
	\end{align}
	where $\eta\in\Omega^{a,b}_D(\ell)$, $\square_1,\ldots,\square_{a+1}\in\Gamma(\widetilde{G})$ and $\Delta_1,\ldots,\Delta_{b+1}\in\Gamma(K)$.
	Above, we denoted by $\overline{\square}_i$ the equivalence class $\square_i\ \operatorname{mod}\ K\in\Gamma(D_\perp\ell)$.
	In view of the decomposition~\eqref{eq:decomposition_d_D}, the cohomological condition $d_D^2=0$ can be rewritten as
	\begin{numcases}{}
		d_{2,-1}^G\circ d_{2,-1}^G=0,\nonumber\\
		d_{2,-1}^G\circ d_{1,0}^G+d_{1,0}^G\circ d_{2,-1}^G=0,\nonumber\\
		d_{1,0}^G\circ d_{1,0}^G+d_{0,1}^G\circ d_{2,-1}^G+d_{2,-1}^G\circ d_{0,1}^G=0,\nonumber\\
		d_{0,1}^G\circ d_{1,0}^G+d_{1,0}^G\circ d_{0,1}^G=0,\label{eq:cohomological:4}\\
		d_{0,1}^G\circ d_{0,1}^G=0.\label{eq:cohomological:5}
	\end{numcases}
	
	The first ingredient for the construction of the desired map \eqref{eq:step2} is the following.

	
	\begin{lemma}
		\label{lem:d_1,0}
		(1) For each $a\in\mathbb{N}$, we have a complex $(\Omega_D^{a,\bullet}(\ell),d_{0,1}^G)$. For $a=0$, it coincides with the de Rham complex of $K$ with coefficients in $\ell$, i.e.
		\begin{equation}\label{eq:complexes}
			\big(\Omega^{0,\bullet}_D(\ell),d^G_{0,1}\big)=\big(\Omega^\bullet(K,\ell),d_{\nabla^\ell}\big).
		\end{equation}
		
		\noindent
		(2) We have a chain map
		\[
		d_{1,0}^G\circ(\sigma|_{K})^{*}:\big(\Omega^\bullet(\mathcal{F},\ell),d_{\nabla^\ell}\big)\rightarrow\big(\Omega^{1,\bullet}_D(\ell),-d_{0,1}^G\big).
		\]
	\end{lemma}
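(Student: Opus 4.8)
The plan is to treat both statements as formal consequences of the bigraded decomposition~\eqref{eq:decomposition_d_D} of the der-differential together with the relation $d_D^2=0$, once the pullback $(\sigma|_K)^*$ has been recognized as a chain isomorphism onto $\big(\Omega_D^{0,\bullet}(\ell),d_{0,1}^G\big)$. No genuinely new geometric input is needed beyond what is already recorded in the excerpt; the content is organizing the bidegree bookkeeping correctly.

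For part~(1), I first observe that $d_{0,1}^G$ has bidegree $(0,1)$, so it preserves the index $a$ and restricts to a map $\Omega_D^{a,\bullet}(\ell)\to\Omega_D^{a,\bullet+1}(\ell)$ for each $a$. The identity $d_{0,1}^G\circ d_{0,1}^G=0$ is exactly the last relation~\eqref{eq:cohomological:5} extracted from $d_D^2=0$, so $\big(\Omega_D^{a,\bullet}(\ell),d_{0,1}^G\big)$ is a complex for every $a\in\mathbb{N}$. To identify the case $a=0$, I specialize formula~\eqref{eq:d_0,1} to $a=0$: the two sums containing the classes $\overline{\square}_i$ have empty summation range and disappear, and the surviving two sums, after the harmless reindexing $i\mapsto i-1$, coincide term by term with the Koszul formula~\eqref{eq:diff} for $d_{\nabla^\ell}$ on $\Omega^\bullet(K,\ell)$, where the relevant $K$-action on $\ell$ is the tautological representation $\nabla^\ell$. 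This gives the identification~\eqref{eq:complexes}.

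For part~(2), the key observation is that $\sigma|_K\colon K\to T\mathcal{F}$ is a Lie algebroid isomorphism which, via the intertwining $\nabla^\ell_\Delta\lambda=\nabla^\ell_{\sigma(\Delta)}\lambda$ recorded earlier, is compatible with the $\ell$-representations on both sides. Hence its pullback is a chain isomorphism $(\sigma|_K)^*\colon\big(\Omega^\bullet(\mathcal{F},\ell),d_{\nabla^\ell}\big)\to\big(\Omega^\bullet(K,\ell),d_{\nabla^\ell}\big)=\big(\Omega_D^{0,\bullet}(\ell),d_{0,1}^G\big)$, that is $(\sigma|_K)^*\circ d_{\nabla^\ell}=d_{0,1}^G\circ(\sigma|_K)^*$. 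Writing $F:=d_{1,0}^G\circ(\sigma|_K)^*$, I then compute $F\circ d_{\nabla^\ell}=d_{1,0}^G\circ d_{0,1}^G\circ(\sigma|_K)^*$ and $(-d_{0,1}^G)\circ F=-d_{0,1}^G\circ d_{1,0}^G\circ(\sigma|_K)^*$. Since $(\sigma|_K)^*$ is invertible, the desired chain-map identity $F\circ d_{\nabla^\ell}=(-d_{0,1}^G)\circ F$ is equivalent to $d_{1,0}^G\circ d_{0,1}^G+d_{0,1}^G\circ d_{1,0}^G=0$, which is precisely relation~\eqref{eq:cohomological:4}.

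The computations are entirely routine, and I expect the only delicate point to be the consistent tracking of signs. In particular, the minus sign placed in the target complex $\big(\Omega_D^{1,\bullet}(\ell),-d_{0,1}^G\big)$ is exactly what converts the chain-map requirement into the anticommutation relation~\eqref{eq:cohomological:4} rather than a commutation relation; so the sign conventions in the decomposition~\eqref{eq:decomposition_d_D} and in the explicit Koszul-type formulas~\eqref{eq:d_1,0} and~\eqref{eq:d_0,1} must be handled carefully. Once this is done, both parts follow immediately from $d_D^2=0$ and the intertwining of the $\ell$-representations under $\sigma|_K$.
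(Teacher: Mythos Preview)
Your proposal is correct and follows essentially the same approach as the paper: both arguments deduce part~(1) from~\eqref{eq:cohomological:5} together with the specialization of~\eqref{eq:d_0,1} to $a=0$, and both obtain part~(2) by first recognizing $(\sigma|_K)^*$ as a chain isomorphism onto $\big(\Omega_D^{0,\bullet}(\ell),d_{0,1}^G\big)$ and then invoking~\eqref{eq:cohomological:4}. The only cosmetic difference is that the paper phrases the second step as ``$d_{1,0}^G$ is a chain map $(\Omega_D^{0,\bullet}(\ell),d_{0,1}^G)\to(\Omega_D^{1,\bullet}(\ell),-d_{0,1}^G)$'' and then composes, whereas you compose first and check the chain-map identity directly; the content is identical.
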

	\begin{proof}
		For item $(1)$, note that $d_{0,1}^G$ preserves $\Omega^{a,\bullet}_D(\ell)$ and squares to zero by Eq.~\eqref{eq:cohomological:5}. Hence $(\Omega_D^{a,\bullet}(\ell),d_{0,1}^G)$ is a complex. It is clear that $\Omega_D^{0,\bullet}(\ell)=\Gamma(\wedge^\bullet K^\ast\otimes\ell)=\Omega^\bullet(K,\ell)$, and Eq.~\eqref{eq:d_0,1} shows that $d_{0,1}^G$ acts on $\Omega^{0,\bullet}_D(\ell)=\Omega^\bullet(K,\ell)$ exactly like $d_{\nabla^\ell}$.
		
		For item $(2)$, recall that under the Lie algebroid isomorphism $\sigma:K\rightarrow T\mathcal{F}$, the identity map $\text{id}_\ell\colon\ell\to\ell$ intertwines the representations of $K$ and $T\mathcal{F}$ on $\ell$. Hence, we have an isomorphism of complexes
		\[
		(\sigma|_{K})^{*}:\big(\Omega^\bullet(\mathcal{F},\ell),d_{\nabla^\ell}\big)\overset{\sim}{\longrightarrow}\big(\Omega^\bullet(K,\ell),d_{\nabla^\ell}\big).
		\]
		Moreover, Eq.~\eqref{eq:cohomological:4} shows that we have a chain map
		\begin{equation*}
			d_{1,0}^G\colon(\Omega^{0,\bullet}_D(\ell),d_{0,1}^G)\longrightarrow(\Omega^{1,\bullet}_D(\ell),-d_{0,1}^G).
		\end{equation*}
		Along with the equality of complexes \eqref{eq:complexes} obtained in part $(1)$, this proves the statement.
	\end{proof}

	
	
	

	The second ingredient is the standard degree $0$ graded $\Omega^\bullet(K)$-module isomorphism
	\begin{equation*}
		\tau\colon\Omega^{1,\bullet}_D(\ell)\overset{\sim}{\longrightarrow}\Omega^\bullet(K,J^1_\perp\ell),
	\end{equation*}
	which maps $\eta\in\Omega^{1,b}_D(\ell)$ to the element $\tau(\eta)\in\Omega^b(K,J^1_\perp\ell)$ uniquely determined by
	\begin{equation*}
		\eta(\overline{\square},\Delta_1,\ldots,\Delta_b)=\big\langle\tau(\eta)(\Delta_1,\ldots,\Delta_b),\overline{\square}\big\rangle
	\end{equation*}
	for any $\square\in\Gamma(\widetilde{G})$ and $\Delta_1,\ldots,\Delta_b\in\Gamma(K)$. This map is compatible with differentials.

	
	\begin{lemma}
		\label{lem:tau}
		The map $\tau$ is an isomorphism of complexes
		\begin{equation*}
			\tau\colon\big(\Omega^{1,\bullet}_D(\ell),-d_{0,1}^G\big)\longrightarrow\left(\Omega^\bullet(K,J^1_\perp\ell),d_{\nabla^{J^1_\perp\ell}}\right).
		\end{equation*}
	\end{lemma}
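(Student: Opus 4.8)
The map $\tau$ is already a degree-$0$ isomorphism of graded $\Omega^\bullet(K)$-modules, so the entire content of the lemma is that $\tau$ is compatible with the differentials, i.e.
\[
\tau\circ(-d_{0,1}^G)=d_{\nabla^{J^1_\perp\ell}}\circ\tau .
\]
An element of $\Omega^b(K,J^1_\perp\ell)$ is uniquely determined by its pairings against the sections $\overline{\square}=\square\ \text{mod}\ K\in\Gamma(D_\perp\ell)$ with $\square\in\Gamma(\widetilde{G})$, and the defining property of $\tau$ rewrites the left-hand side as $\langle\tau(-d_{0,1}^G\eta)(\Delta_1,\ldots,\Delta_{b+1}),\overline{\square}\rangle=-(d_{0,1}^G\eta)(\overline{\square},\Delta_1,\ldots,\Delta_{b+1})$. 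Hence I would reduce the lemma to verifying, for every $\eta\in\Omega^{1,b}_D(\ell)$, every $\square\in\Gamma(\widetilde{G})$ and all $\Delta_1,\ldots,\Delta_{b+1}\in\Gamma(K)$, the scalar identity
\[
-(d_{0,1}^G\eta)(\overline{\square},\Delta_1,\ldots,\Delta_{b+1})=\big\langle(d_{\nabla^{J^1_\perp\ell}}\tau(\eta))(\Delta_1,\ldots,\Delta_{b+1}),\overline{\square}\big\rangle .
\]

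To expand the right-hand side I apply the Koszul formula for $d_{\nabla^{J^1_\perp\ell}}$ and pair with $\overline{\square}$. The only nontrivial terms are those of the form $\langle\nabla^{J^1_\perp\ell}_{\Delta_i}\alpha,\overline{\square}\rangle$ with $\alpha=\tau(\eta)(\Delta_1,\ldots,\widehat{\Delta}_i,\ldots,\Delta_{b+1})$; here I use the definition \eqref{eq:K_representation_J^1_perp ell} of the $\ell$-twisted dual Bott representation,
\[
\big\langle\nabla^{J^1_\perp\ell}_{\Delta_i}\alpha,\overline{\square}\big\rangle=\nabla^\ell_{\Delta_i}\langle\alpha,\overline{\square}\rangle-\big\langle\alpha,\nabla^{D_\perp\ell}_{\Delta_i}\overline{\square}\big\rangle ,
\]
together with $\nabla^{D_\perp\ell}_{\Delta_i}\overline{\square}=\overline{[\Delta_i,\square]}$, the relation $\langle\tau(\eta)(\cdots),\overline{\square}\rangle=\eta(\overline{\square},\cdots)$, and the fact that $\nabla^\ell_{\Delta_i}$ acts on $\ell$ as the derivation $\Delta_i(\,\cdot\,)$. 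This converts the paired right-hand side into three families of terms: derivative terms $\Delta_i(\eta(\overline{\square},\cdots))$, Bott-correction terms $\eta(\overline{[\square,\Delta_i]},\cdots)$ (after rewriting $\overline{[\Delta_i,\square]}=-\overline{[\square,\Delta_i]}$), and bracket terms $\eta(\overline{\square},[\Delta_i,\Delta_j],\cdots)$, carrying signs $(-1)^{i+1}$, $(-1)^{i+1}$, and $(-1)^{i+j}$ respectively.

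Finally I would specialize the explicit formula \eqref{eq:d_0,1} for $d_{0,1}^G$ to $a=1$ (so that $\overline{\square}_1=\overline{\square}$) and multiply by $-1$: this produces exactly the same three families, with signs $(-1)^{i+1}$, $(-1)^{j+1}$, and $(-1)^{i+j}$, which match those above after relabelling the index in the middle sum. Comparing term by term then yields the desired identity, and since no geometric input beyond \eqref{eq:K_representation_J^1_perp ell} enters, the two complexes are isomorphic via $\tau$. The main (indeed the only) obstacle is the sign bookkeeping: one must carefully reconcile the sign conventions of the Koszul formula for $d_{\nabla^{J^1_\perp\ell}}$, the ordering convention in the definition of $\tau$ (which places the $\widetilde{G}$-slot first in $\eta$ but last in the pairing), and the prefactors $(-1)^{a+i+1}$ and $(-1)^{i+a+j}$ in \eqref{eq:d_0,1} evaluated at $a=1$. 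Once these are aligned, the two expressions coincide identically.
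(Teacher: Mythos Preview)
Your proposal is correct and follows essentially the same approach as the paper's proof: both expand $\big\langle(d_{\nabla^{J^1_\perp\ell}}\tau(\eta))(\Delta_1,\ldots,\Delta_{b+1}),\overline{\square}\big\rangle$ via the Koszul formula together with the definition \eqref{eq:K_representation_J^1_perp ell} of $\nabla^{J^1_\perp\ell}$, obtain the three families of terms you describe, and then match them against the explicit expression \eqref{eq:d_0,1} for $d_{0,1}^G$ specialized to $a=1$. The sign bookkeeping you flag is indeed the only content, and your analysis of it is accurate.
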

	
	\begin{proof}
		Fix $\eta\in\Omega^{1,b}_D(\ell)$. Using the Koszul formula for $d_{\nabla^{J^1_\perp\ell}}$ and the definition \eqref{eq:K_representation_J^1_perp ell} of $\nabla^{J^1_\perp\ell}$, we compute for any $\square\in\Gamma(\widetilde{G})$ and $\Delta_1,\ldots,\Delta_{b+1}\in\Gamma(K)$:
		\begin{align*}
			&\left\langle\left(d_{\nabla^{J^1_\perp\ell}}\tau(\eta)\right)(\Delta_1,\ldots,\Delta_{b+1}),\overline{\square}\right\rangle\\
			&\hspace{1.5cm}=\sum_{i=1}^{b+1}(-1)^{i+1}\left\langle\nabla^{J^1_\perp\ell}_{\Delta_i}\big(\tau(\eta)(\Delta_1,\ldots,\widehat{\Delta}_i,\ldots,\Delta_{b+1})\big),\overline{\square}\right\rangle\\
			&\hspace{1.5cm}\phantom{=}+\sum_{i<j}(-1)^{i+j}\left\langle\tau(\eta)\big([\Delta_i,\Delta_j],\Delta_1,\ldots,\widehat{\Delta}_i,\ldots,\widehat{\Delta}_j,\ldots,\Delta_{b+1}\big),\overline{\square}\right\rangle\\
			&\hspace{1.5cm}=\sum_{i=1}^{b+1}(-1)^{i+1}\Delta_i\big(\eta(\overline{\square},\Delta_1,\ldots,\widehat{\Delta}_i,\ldots,\Delta_{b+1})\big)\\
			&\hspace{1.5cm}\phantom{=}-\sum_{i=1}^{b+1}(-1)^{i+1}\eta\big(\overline{[\Delta_i,\square]},\Delta_1,\ldots,\widehat{\Delta}_i,\ldots,\Delta_{b+1}\big)\\
			&\hspace{1.5cm}\phantom{=}+\sum_{i<j}(-1)^{i+j}\eta\big(\overline{\square},[\Delta_i,\Delta_j],\Delta_1,\ldots,\widehat{\Delta}_i,\ldots,\widehat{\Delta}_j,\ldots,\Delta_{b+1}\big)\\
			&\hspace{1.5cm}=-\big(d_{0,1}^G\eta\big)\big(\overline{\square},\Delta_1,\ldots,\Delta_{b+1}\big)\\
			&\hspace{1.5cm}=-\left\langle\tau\big(d_{0,1}^G\eta\big)(\Delta_1,\dots,\Delta_{b+1}),\overline{\square}\right\rangle,
		\end{align*}
		where we also used the expression for $d_{0,1}^G$ in Eq.~\eqref{eq:d_0,1}. This proves the statement.
	\end{proof}
	
	Combining Lemma~\ref{lem:d_1,0} and Lemma~\ref{lem:tau}, we obtain the desired chain map \eqref{eq:step2}.
	
	\begin{cor}\label{cor:chain2}
		We have a chain map
		\[
		\tau\circ d_{1,0}^G\circ(\sigma|_{K})^{*}:\big(\Omega^{\bullet}(\mathcal{F},\ell),d_{\nabla^{\ell}}\big)\rightarrow\left(\Omega^{\bullet}(K,J^{1}_{\perp}\ell),d_{\nabla^{J^{1}_{\perp}\ell}}\right).
		\]
	\end{cor}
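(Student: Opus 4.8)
The plan is to recognize the claimed map as the composition of the two chain maps that have already been established, so that the corollary follows formally from the elementary fact that a composition of chain maps is again a chain map. No new computation is required: all the genuine content sits inside Lemma~\ref{lem:d_1,0} and Lemma~\ref{lem:tau}.

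Concretely, I would first invoke Lemma~\ref{lem:d_1,0}(2), which asserts that
\[
d_{1,0}^G\circ(\sigma|_K)^{*}\colon\big(\Omega^\bullet(\mathcal{F},\ell),d_{\nabla^\ell}\big)\longrightarrow\big(\Omega^{1,\bullet}_D(\ell),-d_{0,1}^G\big)
\]
is a chain map, i.e.\ it intertwines $d_{\nabla^\ell}$ on the source with $-d_{0,1}^G$ on the target. I would then invoke Lemma~\ref{lem:tau}, which provides an isomorphism of complexes
\[
\tau\colon\big(\Omega^{1,\bullet}_D(\ell),-d_{0,1}^G\big)\longrightarrow\big(\Omega^\bullet(K,J^1_\perp\ell),d_{\nabla^{J^1_\perp\ell}}\big);
\]
in particular, $\tau$ intertwines $-d_{0,1}^G$ with $d_{\nabla^{J^1_\perp\ell}}$.

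The key observation is that the intermediate differential $-d_{0,1}^G$ on $\Omega^{1,\bullet}_D(\ell)$ serves simultaneously as the target differential of the first chain map and the source differential of the second. Writing $f:=d_{1,0}^G\circ(\sigma|_K)^{*}$, one has $f\circ d_{\nabla^\ell}=(-d_{0,1}^G)\circ f$ and $\tau\circ(-d_{0,1}^G)=d_{\nabla^{J^1_\perp\ell}}\circ\tau$, so that
\[
(\tau\circ f)\circ d_{\nabla^\ell}=\tau\circ(-d_{0,1}^G)\circ f=d_{\nabla^{J^1_\perp\ell}}\circ(\tau\circ f).
\]
Hence $\tau\circ d_{1,0}^G\circ(\sigma|_K)^{*}$ is a chain map, as claimed. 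I do not expect any obstacle in this step: it is a purely formal consequence of the two preceding lemmas, the only subtlety being the bookkeeping of the sign twist $-d_{0,1}^G$, which is precisely what makes the two intermediate differentials coincide.
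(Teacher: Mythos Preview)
Your proposal is correct and matches the paper's approach exactly: the paper simply states that the corollary follows by combining Lemma~\ref{lem:d_1,0} and Lemma~\ref{lem:tau}, and your explicit verification of the composition (including the sign bookkeeping with $-d_{0,1}^G$) is precisely the intended argument spelled out in full.
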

	
	At last, we show that $\Phi_G$ is the composition of the chain maps in Cor.~\ref{cor:chain1} and Cor.~\ref{cor:chain2}.
	
	\begin{thm}
		\label{theor:Phi_cochain_map}
		The map $\Phi_G$ 
		fits in the following commutative diagram of chain maps
		\begin{equation*}
			\begin{tikzcd}[row sep=large, column sep=large]
				\big(\Omega^{1,\bullet}_D(\ell),-d_{0,1}^G\big)\arrow[rrr, "\tau"]&&&\left(\Omega^\bullet(K,J^1_\perp\ell),d_{\nabla^{J^1_\perp\ell}}\right)\arrow[d, "-\wedge^\bullet(-\sigma|_K^{-1})^{*}\otimes(\sigma_\perp\circ\varpi_\perp^\sharp)"]\\
				\big(\Omega^\bullet(\mathcal{F},\ell),d_{\nabla^\ell}\big)\arrow[u, "d_{1,0}^G\circ(\sigma|_{K})^{*}"]\arrow[rrr, swap,"\Phi_G"]&&&\big(\Omega^\bullet(\mathcal{F},N\mathcal{F}),-d_{\nabla^{N\mathcal{F}}}\big)
			\end{tikzcd}.
		\end{equation*}
		In particular, it is also a chain map
		\begin{equation}
			\label{eq:theor:Phi_cochain_map}
			\Phi_G\colon\big(\Omega^\bullet(\mathcal{F},\ell),d_{\nabla^\ell}\big)\longrightarrow\big(\Omega^\bullet(\mathcal{F},N\mathcal{F}),-d_{\nabla^{N\mathcal{F}}}\big).
		\end{equation}	
	\end{thm}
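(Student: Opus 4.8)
The plan is to deduce the theorem from the two chain maps already in hand, so that almost all the work has been done in the two preceding corollaries. Write $L:=d_{1,0}^G\circ(\sigma|_{K})^{*}$ for the left vertical arrow and $R:=-\wedge^\bullet(-\sigma|_K^{-1})^{*}\otimes(\sigma_\perp\circ\varpi_\perp^\sharp)$ for the right vertical arrow. By Cor.~\ref{cor:chain2} the map $\tau\circ L$ is a chain map $(\Omega^\bullet(\mathcal{F},\ell),d_{\nabla^\ell})\to(\Omega^\bullet(K,J^1_\perp\ell),d_{\nabla^{J^1_\perp\ell}})$, and by Cor.~\ref{cor:chain1}, once its signs are accounted for, $R$ is a chain map into $(\Omega^\bullet(\mathcal{F},N\mathcal{F}),-d_{\nabla^{N\mathcal{F}}})$. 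Hence the whole content of the theorem is the pointwise identity of maps $\Phi_G=R\circ\tau\circ L$; granting this, $\Phi_G$ is a composite of chain maps and the \emph{in particular} statement is immediate.

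To prove $\Phi_G=R\circ\tau\circ L$ I would evaluate the right-hand side on an arbitrary $\eta\in\Omega^n(\mathcal{F},\ell)$ and compare with the equivalent description of $\Phi_G$ recorded in Rem.~\ref{rem:Phi}. First, under the identification $\Omega^{0,\bullet}_D(\ell)=\Omega^\bullet(K,\ell)$ of Lemma~\ref{lem:d_1,0}(1), the element $(\sigma|_K)^{*}\eta$ is exactly $\operatorname{pr}_K^{*}\sigma^{*}\eta$, and applying $d_{1,0}^G$ picks out precisely the $\Omega^{1,n}_D(\ell)$-component of $d_D(\operatorname{pr}_K^{*}\sigma^{*}\eta)$ in the decomposition \eqref{eq:decomposition_d_D}; this is the very component singled out in Rem.~\ref{rem:Phi}, using that $\operatorname{pr}_K^{*}\sigma^{*}\eta$ has trivial $\widetilde{G}$-leg so that only $d_{1,0}^G$ and $d_{0,1}^G$, $d_{2,-1}^G$ contribute in the expected degrees. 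The isomorphism $\tau$ then reinterprets this component as an element of $\Omega^n(K,J^1_\perp\ell)$ through the defining pairing $\eta'(\overline\square,\Delta_1,\ldots,\Delta_n)=\langle\tau(\eta')(\Delta_1,\ldots,\Delta_n),\overline\square\rangle$.

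It remains to apply $R$ and match coefficients with \eqref{eq:def:Phi}. Under the complement-induced identifications $D_\perp\ell\simeq\widetilde{G}$ and $J^1_\perp\ell\simeq\widetilde{G}^\dagger$ coming from Lemma~\ref{lem:complement} and \eqref{eq:iden}, the fibrewise map $\sigma_\perp\circ\varpi_\perp^\sharp\colon J^1_\perp\ell\to N\mathcal{F}$ becomes $\sigma\circ(\varpi^\flat|_{\widetilde{G}})^{-1}\colon(\widetilde{G})^{*}\otimes\ell\to G\simeq N\mathcal{F}$, which is exactly the morphism used in Rem.~\ref{rem:Phi} and encoded in Def.~\ref{def:Phi} through $\varpi^\flat|_{\widetilde{G}}^{-1}(\sigma^{*}\beta)$. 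Pairing the output of $\tau$ against the element $\overline\square=(\varpi^\flat|_{\widetilde{G}})^{-1}(\sigma^{*}\beta)$ then reproduces the right-hand side of \eqref{eq:def:Phi}, so the composite evaluates to $\Phi_G(\eta)$ after transporting back to $\mathcal{F}$-forms along $(\sigma|_K^{-1})^{*}$.

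The conceptual part of the argument is thus a sequence of identifications; I expect the genuine obstacle to be the sign bookkeeping. Two sources of signs must be reconciled: the global $-$ together with the degree-dependent factor $(-1)^\bullet$ produced by $(-\sigma|_K^{-1})^{*}$ acting on $\wedge^n K^{*}$ in $R$, and the Koszul sign arising in $\tau$ when the single $\widetilde{G}$-leg of an element of $\Omega^{1,n}_D(\ell)$ is transported past the $n$ covariant $K$-entries to become a $J^1_\perp\ell$-coefficient. These must combine to match the single sign of the morphism $-\operatorname{id}\otimes(\sigma\circ\varpi^\flat|_{\widetilde{G}}^{-1})$ of Rem.~\ref{rem:Phi} on the nose, and simultaneously the degree-dependent sign is precisely what upgrades the chain map of Cor.~\ref{cor:chain1}, valued in $(\Omega^\bullet(\mathcal{F},N\mathcal{F}),d_{\nabla^{N\mathcal{F}}})$, to one valued in $(\Omega^\bullet(\mathcal{F},N\mathcal{F}),-d_{\nabla^{N\mathcal{F}}})$, since inserting $(-1)^\bullet$ into a degree-$0$ chain map into $(B,d_B)$ yields a chain map into $(B,-d_B)$. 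Carefully tracking these signs through all of the identifications above is where the real care is required.
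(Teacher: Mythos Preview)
Your proposal is correct and follows essentially the same route as the paper: both reduce the theorem to the single identity $\Phi_G=R\circ\tau\circ L$, obtained by evaluating on test data and invoking the chain-map properties already established in Cor.~\ref{cor:chain1} and Cor.~\ref{cor:chain2}. The paper simply carries out the explicit pairing computation you outline, starting from Def.~\ref{def:Phi} rather than Rem.~\ref{rem:Phi}; one minor point is that the extra $(-1)$ you attribute to a ``Koszul sign in $\tau$'' actually splits in the paper's computation into the reordering $(-1)^b$ (moving the $\widetilde{G}$-argument from last position in \eqref{eq:def:Phi} to first position in the definition of $\tau$) together with a further $(-1)$ from the antisymmetry $\langle\alpha,\varpi_\perp^\sharp\gamma\rangle=-\langle\gamma,\varpi_\perp^\sharp\alpha\rangle$, and these combine with the $-(-1)^b$ built into $R$ exactly as you predict.
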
 
	
	\begin{proof}
		From Lemma~\ref{lem:d_1,0} and Lemma~\ref{lem:tau} we already know that the left and top arrows are chain maps. Moreover, Cor.~\ref{cor:chain1} yields a chain map
		\[
		\wedge^\bullet(\sigma|_K^{-1})^{*}\otimes(\sigma_\perp\circ\varpi_\perp^\sharp):\left(\Omega^\bullet(K,J^1_\perp\ell),d_{\nabla^{J^1_\perp\ell}}\right)\longrightarrow\left(\Omega^\bullet(\mathcal{F},N\mathcal{F}),d_{\nabla^{N\mathcal{F}}}\right),
		\]
		hence changing signs shows that the right arrow is a chain map
		\[
		-\wedge^\bullet(-\sigma|_K^{-1})^{*}\otimes(\sigma_\perp\circ\varpi_\perp^\sharp):\left(\Omega^\bullet(K,J^1_\perp\ell),d_{\nabla^{J^1_\perp\ell}}\right)\longrightarrow\left(\Omega^\bullet(\mathcal{F},N\mathcal{F}),-d_{\nabla^{N\mathcal{F}}}\right).
		\]
		It remains to check that the diagram commutes.
		Pick $\eta\in\Omega^b(\mathcal{F},\ell)$.
		For $\Delta_1,\ldots,\Delta_b\in\Gamma(K)$ and $\beta\in\Gamma(N^\ast\mathcal{F}\otimes\ell)$, we can compute
		\begin{align*}
			&\left\langle\beta,\Phi_G(\eta)\big(\sigma(\Delta_1),\ldots,\sigma(\Delta_b)\big)\right\rangle\\
			&\hspace{1cm}=d_D((\sigma\circ\operatorname{pr}_K)^\ast\eta)\left(\Delta_1,\ldots,\Delta_b,\varpi^\flat|_{\widetilde{G}}^{-1}(\sigma_\perp^\ast\beta)\right)\\
			&\hspace{1cm}=(-1)^bd_D((\sigma\circ\operatorname{pr}_K)^\ast\eta)\left(\varpi^\flat|_{\widetilde{G}}^{-1}(\sigma_\perp^\ast\beta),\Delta_1,\ldots,\Delta_b\right)\\
			&\hspace{1cm}=(-1)^b\big(d_{1,0}^G\sigma^{*}(\eta)\big)\big(\varpi_\perp^\sharp(\sigma_\perp^\ast\beta),\Delta_1,\ldots,\Delta_b\big)\\
			&\hspace{1cm}=(-1)^b\left\langle\tau\big(d_{1,0}^G\sigma^{*}(\eta)\big)(\Delta_1,\ldots,\Delta_b),\varpi_\perp^\sharp(\sigma_\perp^\ast\beta)\right\rangle\\
			&\hspace{1cm}=-(-1)^b\left\langle\sigma_\perp^\ast\beta,\varpi_\perp^\sharp\left(\tau\big(d_{1,0}^G\sigma^{*}(\eta)\big)(\Delta_1,\ldots,\Delta_b)\right)\right\rangle\\
			&\hspace{1cm}=-(-1)^b\left\langle\beta,(\sigma_\perp\circ\varpi_\perp^\sharp)\left(\tau\big(d_{1,0}^G\sigma^{*}(\eta)\big)(\Delta_1,\ldots,\Delta_b)\right)\right\rangle\\
			&\hspace{1cm}=\left\langle \beta,-\left(\wedge^\bullet(-\sigma|_K^{-1})^{*}\otimes(\sigma_\perp\circ\varpi_\perp^\sharp)\circ\tau\circ d_{1,0}^G\circ(\sigma|_{K})^{*}\right)(\eta)\big(\sigma(\Delta_1),\ldots,\sigma(\Delta_b)\big)\right\rangle.
		\end{align*}
		This shows that the diagram commutes, hence the proof is finished.
	\end{proof}

	\subsection{The induced map in cohomology}	
	\label{sec:induced_map_cohomology}
	The chain map $\Phi_G$ depends on the choice of complement $G$. This dependence is made explicit by Thm.~\ref{theor:Phi_cochain_map}, which shows that $\Phi_G$ is given by the composition
	\begin{equation*}
		\Phi_G=-\wedge^\bullet(-\sigma|_K^{-1})^{*}\otimes(\sigma_\perp\circ\varpi_\perp))\circ\tau\circ d_{1,0}^G\circ(\sigma|_{K})^{*}.
	\end{equation*}
	The maps appearing above are all canonical, except for $d_{1,0}^G$. 
	In this section we show that, by contrast, the map induced in cohomology by $\Phi_G$ is canonical, i.e.~independent of the complement $G$. To prove this, it suffices to show that the map induced in cohomology by the chain map
	$$
	\tau\circ d_{1,0}^G:\big(\Omega^{\bullet}(K,\ell),d_{\nabla^{\ell}}\big)\rightarrow\left(\Omega^{\bullet}(K,J^{1}_{\perp}\ell),d_{\nabla^{J^{1}_{\perp}\ell}}\right)
	$$
	is canonical. To do so, we will first construct a canonical map $F_{\text{can}}:H^{\bullet}(K,\ell)\rightarrow H^{\bullet}(K,J^{1}_{\perp}\ell)$, and then show that $F_{\text{can}}$ agrees with the map induced in cohomology by $\tau\circ d_{1,0}^G$. The construction of $F_{\text{can}}$ presented below is the specialization to the Lie pair $(D\ell,K)$ of a general construction which works for any Lie pair. In particular, for the Lie pair $(TC,T\mathcal{F})$, the construction recovers the well-known transverse differentiation map $d_{\nu}:H^{\bullet}(\mathcal{F})\rightarrow H^{\bullet}(\mathcal{F},N^{*}\mathcal{F})$ in foliation theory (see e.g.~\cite[\S 2.3]{symplectic}).

	\bigskip
	
	The map $F_{\text{can}}\colon H^{\bullet}(K,\ell)\rightarrow H^{\bullet}(K,J^{1}_{\perp}\ell)$ will arise as the composition of two separate canonical maps. To construct the first component of $F_{\text{can}}$, recall the following complexes:
	\begin{itemize}
		\item we have the de Rham complex $(\Omega^\bullet_D(\ell),d_D)$ of $D\ell$ with values in $\ell$, whose cohomology we denote by $H^{\bullet}_{D}(\ell)$,
		\item we have the de Rham complex $(\Omega^\bullet(K,\ell),d_{\nabla^{\ell}})$ of $K$ with values in $\ell$, whose cohomology we denote by $H^\bullet(K,\ell)$,
		\item we have the subcomplex $(\Omega^\bullet_{D,K}(\ell),d_{D})\subset(\Omega^\bullet_D(\ell),d_D)$ given by 
		\begin{equation*}
			\Omega^n_{D,K}(\ell)=\{\eta\in\Omega_D^n(\ell)\, \mid\, \eta(\Delta_1,\ldots,\Delta_n)=0,\ \text{for all}\ \Delta_1,\ldots,\Delta_n\in\Gamma(K)\},
		\end{equation*}
		whose cohomology we denote by $H^\bullet_{D,K}(\ell)$.
	\end{itemize}
	These ingredients fit in a short exact sequence of complexes and chain maps
	\begin{equation*}
		\begin{tikzcd}
			0\arrow[r]&(\Omega^\bullet_{D,K}(\ell),d_D)\arrow[r, "\text{incl}"]&(\Omega^\bullet_D(\ell),d_D)\arrow[r, "\iota_K^\ast"]&(\Omega^\bullet(K,\ell),d_{\nabla^{\ell}})\arrow[r]&0,
		\end{tikzcd}
	\end{equation*} 
	where $\iota_K\colon K\to D\ell$ is the inclusion of the Lie subalgebroid $K\subset D\ell$.
	The corresponding long exact sequence in cohomology is
	\begin{equation*}
		\begin{tikzcd}[column sep={1em}]
			\ldots\arrow[r]&H^n_{D,K}(\ell)\arrow[r]&H^n_D(\ell)\arrow[r]&H^n(K,\ell)\arrow[r, "\mathfrak{d}"]&H^{n+1}_{D,K}(\ell)\arrow[r]&H^{n+1}_D(\ell)\arrow[r]&H^{n+1}(K,\ell)\arrow[r]&\ldots
		\end{tikzcd}
	\end{equation*}
	where the connecting map $\mathfrak{d}\colon H^n(K,\ell)\to H^{n+1}_{D,K}(\ell)$ is defined by setting
	\begin{equation*}
		\mathfrak{d}[\iota_K^\ast\alpha]=[d_D\alpha].
	\end{equation*}
	Since the der-complex is acyclic (see Rem.~\ref{rem:contracting-homotopy}), the connecting map is an isomorphism
	\begin{equation}
		\label{eq:connecting_isomorphism}
		\mathfrak{d}\colon H^\bullet(K,\ell)\overset{\sim}{\longrightarrow}H^{\bullet+1}_{D,K}(\ell).
	\end{equation} 
	This is the first component of the map $F_{\text{can}}\colon H^{\bullet}(K,\ell)\rightarrow H^{\bullet}(K,J^{1}_{\perp}\ell)$ we aim to construct.
	
	\bigskip
	
	We now construct the second component of $F_{\text{can}}$.
	Note that for any $\alpha\in\Omega_{D,K}^{n+1}(\ell)$ there exists a unique $p(\alpha)\in\Omega^n(K,J^1_\perp\ell)$ such that
	\begin{equation}
		\label{eq:p_action}
		\alpha(\square,\Delta_1,\ldots,\Delta_n)=\left\langle p(\alpha)(\Delta_1,\ldots,\Delta_n),\overline{\square}\right\rangle,
	\end{equation}
	for $\square\in\Gamma(D\ell)$ and $\Delta_1,\ldots,\Delta_n\in\Gamma(K)$. Here we denoted $\overline{\square}:=\square\ \text{mod}\ K\in\Gamma(D_\perp\ell)$.
	The relation \eqref{eq:p_action} defines a map
	\begin{equation*}
		p\colon\Omega^{\bullet+1}_{D,K}(\ell)\longrightarrow\Omega^\bullet(K,J^1_\perp\ell).
	\end{equation*}
	
	\begin{lemma}
		\label{lem:p}
		We get a chain map
		\begin{equation*}
			p\colon\big(\Omega^{\bullet+1}_{D,K}(\ell),d_D\big)\longrightarrow\left(\Omega^\bullet(K,J^1_\perp\ell),-d_{\nabla^{J^1_\perp\ell}}\right).
		\end{equation*}
	\end{lemma}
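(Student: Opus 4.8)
The plan is to verify the chain-map identity
\[
p(d_D\alpha)=-d_{\nabla^{J^1_\perp\ell}}(p(\alpha))
\]
by a direct term-by-term comparison. Since \eqref{eq:p_action} characterizes the value of $p$ on any element of $\Omega^{\bullet+1}_{D,K}(\ell)$ by pairing against a class $\overline{\square}=\square\ \text{mod}\ K$, it suffices to fix $\alpha\in\Omega^{n+1}_{D,K}(\ell)$, sections $\Delta_1,\ldots,\Delta_{n+1}\in\Gamma(K)$ and $\square\in\Gamma(D\ell)$, and to prove
\[
(d_D\alpha)(\square,\Delta_1,\ldots,\Delta_{n+1})=-\big\langle d_{\nabla^{J^1_\perp\ell}}(p(\alpha))(\Delta_1,\ldots,\Delta_{n+1}),\overline{\square}\big\rangle.
\]
Before the computation I would record two preliminary points: first, $d_D$ preserves the subcomplex $\Omega^\bullet_{D,K}(\ell)$ (already used to build the short exact sequence above), so that $p(d_D\alpha)$ is defined; and second, $p(\alpha)$ is well defined precisely because $\alpha$ vanishes on tuples drawn entirely from $K$, which is exactly what makes the left-hand side depend on $\square$ only through $\overline{\square}$.

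For the left-hand side I would expand $(d_D\alpha)(\square,\Delta_1,\ldots,\Delta_{n+1})$ with the Koszul formula for $d_D$. The decisive simplification is that the leading term, in which the derivation $\square$ acts on $\alpha(\Delta_1,\ldots,\Delta_{n+1})$, vanishes: all its entries lie in $K$ and $\alpha\in\Omega_{D,K}(\ell)$. What survives is a sum of three families of terms — the derivations $\Delta_i\big(\alpha(\square,\ldots)\big)$, the mixed brackets $\alpha([\square,\Delta_i],\ldots)$, and the $K$-internal brackets $\alpha(\square,[\Delta_i,\Delta_j],\ldots)$ — each of which I would rewrite through \eqref{eq:p_action} as a pairing of $p(\alpha)(\ldots)$ against either $\overline{\square}$ or $\overline{[\square,\Delta_i]}$, using involutivity of $K$ to keep $[\Delta_i,\Delta_j]$ inside $K$.

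For the right-hand side I would expand $d_{\nabla^{J^1_\perp\ell}}(p(\alpha))$ via the Koszul formula \eqref{eq:diff} and pair with $\overline{\square}$, invoking the defining relation \eqref{eq:K_representation_J^1_perp ell} of the twisted dual Bott representation to split each $\big\langle\nabla^{J^1_\perp\ell}_{\Delta_i}(p(\alpha)(\ldots)),\overline{\square}\big\rangle$ into a derivation term $\nabla^\ell_{\Delta_i}\langle p(\alpha)(\ldots),\overline{\square}\rangle$ and a correction $-\langle p(\alpha)(\ldots),\overline{[\Delta_i,\square]}\rangle$. Matching is then pure bookkeeping: the derivation terms coincide (using $\nabla^\ell_{\Delta_i}=\Delta_i$ on sections of $\ell$), the correction terms coincide after $\overline{[\Delta_i,\square]}=-\overline{[\square,\Delta_i]}$, and the purely $K$-bracket terms coincide after the single transposition that moves $\square$ to the front of $\alpha$.

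I expect the only real friction to be sign bookkeeping — in particular reconciling the orderings $(\square,\Delta_i,\ldots)$ fixed by \eqref{eq:p_action} with the bracket-first ordering produced by the Koszul formula, and tracking the global sign that forces the target differential to be $-d_{\nabla^{J^1_\perp\ell}}$ rather than $+d_{\nabla^{J^1_\perp\ell}}$. The conceptual content, by contrast, sits in two observations: the vanishing of the $\square$-derivation term (which is where $\alpha\in\Omega_{D,K}(\ell)$ enters), and the fact that the representation \eqref{eq:K_representation_J^1_perp ell} is tailored so that its correction term reproduces exactly the mixed-bracket contribution $\alpha([\square,\Delta_i],\ldots)$.
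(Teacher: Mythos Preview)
Your proposal is correct and follows essentially the same route as the paper: both expand $(d_D\alpha)(\square,\Delta_1,\ldots,\Delta_{n+1})$ via the Koszul formula, note the vanishing of the $\square$-derivation term since $\alpha\in\Omega_{D,K}(\ell)$, and match the remaining three families of terms against the Koszul expansion of $\langle d_{\nabla^{J^1_\perp\ell}}(p(\alpha))(\Delta_1,\ldots,\Delta_{n+1}),\overline{\square}\rangle$ using the defining relation~\eqref{eq:K_representation_J^1_perp ell}. Your identification of the sign bookkeeping as the only friction is accurate, and your preliminary remarks (that $d_D$ preserves $\Omega^\bullet_{D,K}(\ell)$ and that $p(\alpha)$ depends on $\square$ only through $\overline{\square}$) are well placed.
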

	\begin{proof}
		Pick $\alpha\in\Omega^{n+1}_{D,K}(\ell)$.
		For any $\Delta_1,\ldots,\Delta_{n+1}\in\Gamma(K)$ and $\square\in\Gamma(D\ell)$, we compute
		\begin{align*}
			&\left\langle p(d_D\alpha)(\Delta_1,\ldots,\Delta_{n+1}),\overline{\square}\right\rangle\\
			&\hspace{2cm}=(d_D\alpha)(\square,\Delta_1,\ldots,\Delta_{n+1})\\
			&\hspace{2cm}=\cancel{\square(\alpha(\Delta_1,\ldots,\Delta_{n+1}))}+\sum_i(-1)^i\Delta_i\big(\alpha(\square,\Delta_1,\ldots,\widehat{\Delta}_i,\ldots,\Delta_{n+1})\big)\\
			&\hspace{2cm}\phantom{=}-\sum_i(-1)^i\alpha([\Delta_i,\square],\Delta_1,\ldots,\widehat{\Delta}_i,\ldots,\Delta_{n+1})\\
			&\hspace{2cm}\phantom{=}-\sum_{i<j}(-1)^{i+j}\alpha(\square,[\Delta_i,\Delta_j],\Delta_1,\ldots,\widehat{\Delta}_i,\ldots,\widehat{\Delta}_j,\ldots,\Delta_{n+1})\\
			&\hspace{2cm}=-\sum_i(-1)^{i+1}\left\langle\nabla^{J^1_\perp\ell}_{\Delta_i}\big(p(\alpha)(\Delta_1,\ldots,\widehat{\Delta}_i,\ldots,\Delta_{n+1})\big),\overline{\square}\right\rangle\\
			&\hspace{2cm}\phantom{=}-\sum_{i<j}(-1)^{i+j}\left\langle p(\alpha)\big([\Delta_i,\Delta_j],\Delta_1,\ldots,\widehat{\Delta}_i,\ldots,\widehat{\Delta}_j,\ldots,\Delta_{n+1}\big),\overline{\square}\right\rangle\\
			&\hspace{2cm}=-\left\langle\left(d_{\nabla^{J^1_\perp\ell}}p(\alpha)\right)(\Delta_1,\ldots,\Delta_{n+1}),\overline{\square}\right\rangle.
		\end{align*}
		This shows that $p\circ d_D=-d_{\nabla^{J^1_\perp\ell}}\circ p$, so the proof is finished.
	\end{proof}

	\begin{remark}
		\label{rem:p}
		If we choose a distribution $G\subset TC$ complementary to $T\mathcal{F}$, then $\Omega^\bullet_{D,K}(\ell)$ identifies with the subcomplex
		\[
		\bigoplus_{\genfrac{}{}{0pt}{}{i\geq 1}{j\geq 0}}\Omega_D^{i,j}(\ell)\subset\big(\Omega^{\bullet}_{D}(\ell),d_{D}\big),
		\]
		where we used the bi-degree notation introduced in \eqref{eq:bigrading}. The map $p\colon\Omega_{D,K}^{\bullet+1}\to\Omega^\bullet(K,J^1_\perp\ell)$ then identifies with the following composition of chain maps
		\begin{equation*}
			\left(\bigoplus\nolimits_{\genfrac{}{}{0pt}{}{i\geq 1}{j\geq 0}}\Omega_D^{i,j}(\ell),d_D\right)\overset{\operatorname{proj}}{\longrightarrow}\big(\Omega_{D}^{1,\bullet}(\ell),d_{0,1}^{G}\big)\overset{\tau}{\longrightarrow}\left(\Omega^\bullet(K,J^1_\perp\ell),-d_{\nabla^{J^1_\perp\ell}}\right),
		\end{equation*}
		see Lemma~\ref{lem:tau}. This provides an alternative proof for Lemma~\ref{lem:p}.
	\end{remark}
	
	Since $p$ is a chain map, it induces a map in cohomology
	\begin{equation}
		\label{eq:p_cohomology}
		[p]\colon H^{\bullet+1}_{D,K}(\ell)\longrightarrow H^\bullet(K,J^1_\perp\ell),
	\end{equation}
	which is the second component of the desired canonical map $F_{\text{can}}\colon H^{\bullet}(K,\ell)\rightarrow H^{\bullet}(K,J^{1}_{\perp}\ell)$.
	
	\begin{defi}
		We define $F_{\text{can}}:=[p]\circ\mathfrak{d}$, i.e.
		\[
		F_{\text{can}}\colon H^\bullet(K,\ell)\longrightarrow H^\bullet(K,J^1_\perp\ell),\ [\iota_K^\ast\alpha]\longmapsto [p(d_D\alpha)].
		\]
	\end{defi}
	We think of $F_{\text{can}}$ as a kind of ``transverse differentiation map'' for the Lie pair $(K,D\ell)$. 
	Now we observe that this canonical map $F_{\text{can}}$ arises from the (non-canonical) chain map given by $\tau\circ d_{1,0}^G$, for any choice of complement $G$.
	
	\begin{lemma}
		\label{lem:F_can}
		For any choice of complement $G$ to $T\mathcal{F}\subset TC$, the chain map
		\[
		\tau\circ d_{1,0}^G:\big(\Omega^{\bullet}(K,\ell),d_{\nabla^{\ell}}\big)\rightarrow\left(\Omega^{\bullet}(K,J^{1}_{\perp}\ell),d_{\nabla^{J^{1}_{\perp}\ell}}\right)
		\] 
		induces a map in cohomology which is canonical (i.e.~independent of $G$). In fact, we have 
		\begin{equation*}
			[\tau\circ d_{1,0}^G]=F_{\text{can}}\colon H^\bullet(K,\ell)\longrightarrow H^\bullet(K,J^1_\perp\ell).
		\end{equation*}
	\end{lemma}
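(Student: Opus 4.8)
The plan is to verify the identity $[\tau\circ d_{1,0}^G]=F_{\text{can}}$ by a direct cochain-level computation, exploiting the freedom in the connecting map $\mathfrak{d}$ to select a particularly convenient lift. First I would fix the complement $G$, so that by Lemma~\ref{lem:complement} we obtain the splitting $D\ell=K\oplus\widetilde{G}$ together with the bi-grading \eqref{eq:bigrading} and the decomposition $d_D=d_{0,1}^G+d_{1,0}^G+d_{2,-1}^G$. By Lemma~\ref{lem:d_1,0}(1) we identify $\Omega^\bullet(K,\ell)=\Omega^{0,\bullet}_D(\ell)$, under which $d_{\nabla^\ell}$ is precisely $d_{0,1}^G$.

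Next I would take a $d_{\nabla^\ell}$-closed cocycle $\beta\in\Omega^\bullet(K,\ell)$ and compute $F_{\text{can}}[\beta]=[p(d_D\alpha)]$, where $\alpha\in\Omega^\bullet_D(\ell)$ is any der-form with $\iota_K^\ast\alpha=\beta$. The key observation is that, under the identification above, the cocycle $\beta$ itself is such a lift: viewing $\beta\in\Omega^{0,\bullet}_D(\ell)\subset\Omega^\bullet_D(\ell)$ (extended by zero in the $\widetilde{G}$-directions), we have $\iota_K^\ast\beta=\beta$. Since the resulting class $[d_D\alpha]\in H^{\bullet+1}_{D,K}(\ell)$ is independent of the chosen lift (two lifts differ by an element of $\Omega^\bullet_{D,K}(\ell)$, whose $d_D$-image is exact there), I may simply take $\alpha=\beta$.

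I would then apply $d_D$ to this lift and use bi-homogeneity. Because $d_{0,1}^G\beta=d_{\nabla^\ell}\beta=0$ by hypothesis, only two components survive:
\[
d_D\beta=d_{1,0}^G\beta+d_{2,-1}^G\beta\in\Omega^{1,\bullet}_D(\ell)\oplus\Omega^{2,\bullet-1}_D(\ell).
\]
Here I invoke Remark~\ref{rem:p}, according to which $p$ factors as the projection onto the $\Omega^{1,\bullet}_D(\ell)$-component followed by $\tau$. Consequently $p$ annihilates the summand $d_{2,-1}^G\beta$ living in $\Omega^{2,\bullet-1}_D(\ell)$ and acts as $\tau$ on the summand $d_{1,0}^G\beta$ living in $\Omega^{1,\bullet}_D(\ell)$, giving $p(d_D\beta)=\tau(d_{1,0}^G\beta)$. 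Assembling these steps yields $F_{\text{can}}[\beta]=[p(d_D\beta)]=[\tau(d_{1,0}^G\beta)]=[\tau\circ d_{1,0}^G][\beta]$, which is the claim.

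The computation is essentially bookkeeping with the bi-grading, so I do not expect a genuine obstacle; the only points requiring care are the choice of the zero-extension lift $\alpha=\beta$ and the verification, via Remark~\ref{rem:p}, that $p$ detects exactly the $(1,\bullet)$-component of $d_D\beta$. It is precisely this last fact that discards the $G$-dependent term $d_{2,-1}^G\beta$ and thereby decouples the canonical map $F_{\text{can}}$ from the choice of complement.
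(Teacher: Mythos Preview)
Your proof is correct and follows essentially the same approach as the paper: both take the zero-extension lift (which is the paper's $\operatorname{pr}_K^\ast\alpha$), apply $d_D$, and identify the result under $p$ with $\tau\circ d_{1,0}^G$. The only cosmetic difference is that the paper verifies $p(d_D(\operatorname{pr}_K^\ast\alpha))=\tau(d_{1,0}^G\alpha)$ by a direct evaluation on $(\square,\Delta_1,\ldots,\Delta_n)$, whereas you obtain it from the bi-grading decomposition of $d_D$ together with Remark~\ref{rem:p}.
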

	\begin{proof}
		Fix $\alpha\in\Omega^n(K,\ell)\simeq\Omega_D^{0,n}(\ell)$ with $d_{\nabla^{\ell}}\alpha=0$. Since $\alpha=\iota_K^{*}\text{pr}_{K}^{*}\alpha$ with $\text{pr}_{K}^{*}\alpha\in\Omega^{n}_{D}(\ell)$, the definition of $F_{\text{can}}$ shows that
		\begin{equation}\label{eq:defF}
			F_{\text{can}}[\alpha]=[p(d_{D}(\text{pr}_{K}^{*}\alpha))].
		\end{equation}
		For any $\Delta_1,\ldots,\Delta_n\in\Gamma(K)$ and $\square\in\Gamma(\widetilde{G})$, we compute
		\begin{align*}
			\left\langle p(d_{D}(\text{pr}_{K}^{*}\alpha))(\Delta_1,\ldots,\Delta_n),\overline{\square}\right\rangle&=(d_{D}(\text{pr}_{K}^{*}\alpha))(\square,\Delta_1,\ldots,\Delta_n)\\
			&=(d^G_{1,0}\alpha)(\overline{\square},\Delta_1,\ldots,\Delta_n)\\
			&=\left\langle\tau(d^G_{1,0}\alpha)(\Delta_1,\ldots,\Delta_n),\overline{\square}\right\rangle.
		\end{align*}
		This shows that $p(d_{D}(\text{pr}_{K}^{*}\alpha))=(\tau\circ d^G_{1,0})(\alpha)$. Along with \eqref{eq:defF}, this shows that
		\[
		F_{\textnormal{can}}[\alpha]=[\tau\circ d^G_{1,0}][\alpha].\qedhere
		\] 
	\end{proof}
	
	Finally, we obtain the main result of this section.
	\begin{thm}
		\label{theor:Phi_cohomology}
		For any choice of complement $G$ to $T\mathcal{F}\subset TC$, the chain map
		\[
		\Phi_G\colon(\Omega^\bullet(\mathcal{F},\ell),d_{\nabla^\ell})\longrightarrow(\Omega^\bullet(\mathcal{F},N\mathcal{F}),-d_{\nabla^{N\mathcal{F}}})
		\]
		induces a map in cohomology which is canonical (i.e.~independent of $G$). 
	\end{thm}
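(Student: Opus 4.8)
The plan is to exploit the explicit factorization of $\Phi_G$ recorded in Thm.~\ref{theor:Phi_cochain_map} and to isolate the single $G$-dependent factor, whose behaviour in cohomology is already pinned down by Lemma~\ref{lem:F_can}. Concretely, I would pass to cohomology in the decomposition
\[
\Phi_G=\big(-\wedge^\bullet(-\sigma|_K^{-1})^{*}\otimes(\sigma_\perp\circ\varpi_\perp^\sharp)\big)\circ\tau\circ d_{1,0}^G\circ(\sigma|_{K})^{*},
\]
so that, by functoriality of cohomology, $[\Phi_G]$ appears as a composition of three induced maps, and it suffices to argue that each of them is independent of $G$.

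The first step is to check that the two outer factors are manifestly canonical. The map $(\sigma|_K)^{*}$ is the isomorphism of complexes $\big(\Omega^\bullet(\mathcal{F},\ell),d_{\nabla^\ell}\big)\to\big(\Omega^\bullet(K,\ell),d_{\nabla^\ell}\big)$ induced by the Lie algebroid isomorphism $\sigma\colon K\to T\mathcal{F}$, which is the restriction of the symbol map and hence depends only on $(C,\xi_C)$, not on the complement. Likewise, the outer factor on the right is the chain map of Cor.~\ref{cor:chain1}, assembled entirely out of $\sigma$, $\sigma_\perp\colon D_\perp\ell\to N\mathcal{F}$ and $\varpi_\perp^\sharp\colon J^1_\perp\ell\to D_\perp\ell$; these are all canonical data attached to the pre-symplectic Atiyah form $\varpi$ (the former descends from $\sigma$, the latter from $\varpi$), with no reference to $G$. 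Thus both outer maps induce canonical maps in cohomology.

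The second step treats the only $G$-dependent factor, the chain map $\tau\circ d_{1,0}^G\colon\big(\Omega^\bullet(K,\ell),d_{\nabla^\ell}\big)\to\big(\Omega^\bullet(K,J^1_\perp\ell),d_{\nabla^{J^1_\perp\ell}}\big)$. This is precisely the map analysed in Lemma~\ref{lem:F_can}, where its induced map in cohomology is identified with the canonical transverse differentiation map $F_{\text{can}}=[p]\circ\mathfrak{d}$ for the Lie pair $(K,D\ell)$, independently of $G$. Substituting $[\tau\circ d_{1,0}^G]=F_{\text{can}}$ into the composition then exhibits every factor of $[\Phi_G]$ as independent of $G$, so $[\Phi_G]$ itself is canonical.

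In this sense there is no remaining obstacle: the genuine content has already been spent in constructing $F_{\text{can}}$ via the connecting isomorphism $\mathfrak{d}$ of the short exact sequence of der-complexes together with the chain map $p$, and in establishing $[\tau\circ d_{1,0}^G]=F_{\text{can}}$. The present theorem is a formal assembly of those facts, and the only point I would be careful to state cleanly is that canonicity of the two outer factors is visible directly from their definitions, so that canonicity of the single interior factor is enough to conclude.
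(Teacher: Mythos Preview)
Your proposal is correct and follows exactly the same approach as the paper: the paper's proof is the single sentence ``It follows immediately from Thm.~\ref{theor:Phi_cochain_map} and Lemma~\ref{lem:F_can},'' and you have simply unpacked this by isolating the canonical outer factors in the factorization of $\Phi_G$ and invoking Lemma~\ref{lem:F_can} for the $G$-dependent middle piece $\tau\circ d_{1,0}^G$.
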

	\begin{proof}
		It follows immediately from Thm.~\ref{theor:Phi_cochain_map} and Lemma~\ref{lem:F_can}.
	\end{proof}
	
	Consequently, we can denote the map in cohomology unambiguously by
	\[
	[\Phi]:H^{\bullet}(\mathcal{F},\ell)\longrightarrow H^{\bullet}(\mathcal{F},N\mathcal{F}).
	\]

	\section{Infinitesimal rigidity}\label{sec:three}
	
	In this section, we show that a compact regular coisotropic submanifold $C\subset(M,\xi)$ is infinitesimally rigid inside $\text{Def}_{\mathcal{F}}(C)$. This means that every first order deformation of $C$ in $\text{Def}_{\mathcal{F}}(C)$ is induced by a contact isotopy of $(M,\xi)$. Infinitesimal rigidity implies in particular that the deformation problem of $C$ inside $\text{Def}_{\mathcal{F}}(C)$ is unobstructed.

	\begin{prop}
		\label{prop:injective}
		The map $[\Phi]:H^{1}(\mathcal{F},\ell)\rightarrow H^{1}(\mathcal{F},N\mathcal{F})$ is injective.
	\end{prop}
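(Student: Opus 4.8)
The plan is to reduce the statement to the injectivity of the canonical map $F_{\text{can}}\colon H^\bullet(K,\ell)\to H^\bullet(K,J^1_\perp\ell)$, and then to exhibit an explicit left inverse of $F_{\text{can}}$ built out of the identity derivation $\mathbbm 1$. Since $[\Phi]$ is canonical (Thm.~\ref{theor:Phi_cohomology}), I am free to work through the factorization established in Thm.~\ref{theor:Phi_cochain_map}.

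\emph{Reduction to $F_{\text{can}}$.} By Thm.~\ref{theor:Phi_cochain_map}, $\Phi_G$ is the composite of $(\sigma|_K)^\ast$, of $\tau\circ d^G_{1,0}$, and of the morphism $-\wedge^\bullet(-\sigma|_K^{-1})^\ast\otimes(\sigma_\perp\circ\varpi_\perp^\sharp)$. The first and last of these are induced by VB isomorphisms intertwining the relevant representations: for the first this is the fact that $\sigma|_K$ is a Lie algebroid isomorphism carrying $\nabla^\ell$ to $\nabla^\ell$, and for the last it is exactly Cor.~\ref{cor:sigma_perp_omega_perp} together with the invertibility of $\sigma_\perp\circ\varpi_\perp^\sharp$. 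Hence both induce isomorphisms in cohomology, while by Lemma~\ref{lem:F_can} the middle map induces $F_{\text{can}}$. Therefore $[\Phi]=(\text{iso})\circ F_{\text{can}}\circ(\text{iso})$, and it is enough to show that $F_{\text{can}}$ is injective (in every degree, in particular in degree $1$).

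\emph{Construction of a left inverse.} Recall that the symbol $\sigma\colon D\ell\to TC$ sends $K$ isomorphically onto $T\mathcal{F}$ and kills $\mathbbm 1$, so $\overline{\mathbbm 1}:=\mathbbm 1\ \mathrm{mod}\ K$ spans $\ker(\sigma_\perp\colon D_\perp\ell\to N\mathcal{F})$. Dualizing and twisting by $\ell$ yields a short exact sequence of $K$-representations $0\to N^\ast\mathcal{F}\otimes\ell\to J^1_\perp\ell\xrightarrow{q}\ell\to 0$, where $q(\phi)=\langle\phi,\overline{\mathbbm 1}\rangle$. I would check that $q$ is a morphism of $K$-representations directly from \eqref{eq:K_representation_J^1_perp ell}: since $[\Delta,\mathbbm 1]=0$ for every $\Delta\in\Gamma(K)$, one has $\nabla^{D_\perp\ell}_\Delta\overline{\mathbbm 1}=0$, whence $\langle\nabla^{J^1_\perp\ell}_\Delta\phi,\overline{\mathbbm 1}\rangle=\nabla^\ell_\Delta\langle\phi,\overline{\mathbbm 1}\rangle$. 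Thus $q$ induces a chain map and a map $q_\ast\colon H^\bullet(K,J^1_\perp\ell)\to H^\bullet(K,\ell)$.

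\emph{The key computation.} I claim $q_\ast\circ F_{\text{can}}=\mathrm{id}$, which forces $F_{\text{can}}$ to be injective. Given a $d_{\nabla^\ell}$-closed $\theta=\iota_K^\ast\alpha\in\Omega^n(K,\ell)$ with $\alpha\in\Omega^n_D(\ell)$, the definition of $F_{\text{can}}$ gives $F_{\text{can}}[\theta]=[p(d_D\alpha)]$, and by the defining relation \eqref{eq:p_action} of $p$, evaluating on $\overline{\mathbbm 1}$ produces
\[
q\big(p(d_D\alpha)(\Delta_1,\ldots,\Delta_n)\big)=(d_D\alpha)(\mathbbm 1,\Delta_1,\ldots,\Delta_n)=(\iota_{\mathbbm 1}d_D\alpha)(\Delta_1,\ldots,\Delta_n).
\]
Since $\iota_{\mathbbm 1}$ is the contracting homotopy of the der-complex (Rem.~\ref{rem:contracting-homotopy}), Cartan's formula gives $\iota_{\mathbbm 1}d_D\alpha=\mathcal{L}_{\mathbbm 1}\alpha-d_D\iota_{\mathbbm 1}\alpha=\alpha-d_D\iota_{\mathbbm 1}\alpha$; applying $\iota_K^\ast$ and using that $\iota_K^\ast$ is a chain map yields $\iota_K^\ast\iota_{\mathbbm 1}d_D\alpha=\theta-d_{\nabla^\ell}(\iota_K^\ast\iota_{\mathbbm 1}\alpha)$. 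Passing to cohomology gives $q_\ast F_{\text{can}}[\theta]=[\theta]$, as desired.

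\emph{Expected obstacle.} The formal reduction is routine; the only genuinely substantive point is the last computation, namely recognizing that contraction with the identity derivation $\mathbbm 1$ simultaneously plays two roles -- it realizes the splitting $q$ of $J^1_\perp\ell$ and furnishes the contracting homotopy $\mathcal{L}_{\mathbbm 1}=\mathrm{id}$ of the der-complex -- so that $q_\ast$ becomes a left inverse of $F_{\text{can}}$. Getting the bookkeeping of $\iota_{\mathbbm 1}$, $d_D$ and $\iota_K^\ast$ to line up (and confirming the sign conventions on $d_{\nabla^{J^1_\perp\ell}}$ do not affect the argument, as cocycles and coboundaries are unchanged) is where I would concentrate the care.
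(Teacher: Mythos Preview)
Your reduction step contains an error: you assert that $\sigma_\perp\circ\varpi_\perp^\sharp$ is invertible, but two lines later you (correctly) observe that $\ker\sigma_\perp$ is spanned by $\overline{\mathbbm 1}$. In fact $\sigma_\perp\circ\varpi_\perp^\sharp\colon J^1_\perp\ell\to N\mathcal{F}$ is surjective with one-dimensional kernel $\varpi_\perp^\flat(\overline{\mathbbm 1})=\sigma_\perp^\ast\theta_C$, so the right vertical arrow in the diagram of Thm.~\ref{theor:Phi_cochain_map} is \emph{not} induced by a VB isomorphism. Consequently, injectivity of $F_{\text{can}}$ alone does not imply injectivity of $[\Phi]$: the composite $[\Phi]=(\text{non-iso})\circ F_{\text{can}}\circ(\text{iso})$ could in principle kill a nonzero class that $F_{\text{can}}$ does not. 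As stated, there is a genuine gap in the logic.

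The repair is close at hand, and it is essentially what the paper does. Your retraction $q\colon J^1_\perp\ell\to\ell$, $\phi\mapsto\langle\phi,\overline{\mathbbm 1}\rangle$, factors through the right vertical arrow: pairing with $\theta_C\in\Gamma(N^\ast\mathcal{F}\otimes\ell)$ defines $r\colon\Omega^\bullet(\mathcal{F},N\mathcal{F})\to\Omega^\bullet(\mathcal{F},\ell)$, and since $\varpi_\perp^\sharp(\sigma_\perp^\ast\theta_C)=\overline{\mathbbm 1}$ one computes $\langle\theta_C,(\sigma_\perp\circ\varpi_\perp^\sharp)(\phi)\rangle=-\langle\phi,\overline{\mathbbm 1}\rangle=-q(\phi)$. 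Moreover $\theta_C$ is parallel for the $\ell$-twisted dual of the Bott connection (this is precisely the defining relation~\eqref{eq:conn} of $\nabla^\ell$), so $r$ is a chain map. Your key computation $q_\ast\circ F_{\text{can}}=\mathrm{id}$ then transports to $r_\ast\circ[\Phi]=-\mathrm{id}$ on $H^1(\mathcal{F},\ell)$, and injectivity follows. The paper carries this out directly without passing through $F_{\text{can}}$: it pairs both $\Phi_G(\eta)$ and its assumed primitive $Z\ \mathrm{mod}\ T\mathcal{F}$ with $\theta_C$, and reads off $\eta=-d_{\nabla^\ell}(\theta_C(Z))$ from the same identity $\mathcal{L}_{\mathbbm 1}=\mathrm{id}$ that drives your argument.
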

	\begin{proof}
		Pick $\eta\in\Omega^1(\mathcal{F},\ell)$ such that $d_{\nabla^\ell}\eta=0$, and
		assume that $[\Phi_G][\eta]=0$ for some (hence any) complement $G$ to $T\mathcal{F}$.
		This means that there exists $Z\in\mathfrak{X}(C)$ s.t.
		\begin{equation}
			\label{eq:proof:prop:injective:hypothesis}
			\Phi_G(\eta)=d_{\nabla^{N\mathcal{F}}}(Z\ \operatorname{mod}\ T\mathcal{F}).
		\end{equation}
		
		\vspace{0.2cm}
		\noindent
		\underline{Claim:} $\eta=-d_{\nabla^\ell}(\theta_{C}(Z))$, where $\theta_{C}\in\Omega^1(C,\ell)$ is the pre-contact form on $C$.
		
		\vspace{0.1cm}
		\noindent
		Note that $\theta_{C}\in\Gamma(N^{*}\mathcal{F}\otimes\ell)$ and $\sigma_\perp^\ast\theta_C=\iota_{\mathbbm{1}}\varpi\in\Gamma(J^1_\perp\ell)$, where $\varpi$ is the pre-symplectic Atiyah form corresponding to $\theta_{C}$.
		We can compute for any $\Delta\in\Gamma(K)$, 
		\begin{align*}
			\big\langle\theta_{C},\Phi_G(\eta)\big(\sigma(\Delta)\big)\big\rangle&=\left\langle\theta_C,(\sigma_\perp\circ\varpi_{\perp}^{\sharp})\left(\tau\big(d_{1,0}^{G}\sigma^{*}\eta\big)\right)(\Delta) \right\rangle\\
			&=-\left\langle \tau\big(d_{1,0}^{G}\sigma^{*}\eta\big)(\Delta),\varpi_\perp^\sharp(\sigma_\perp^\ast\theta_C)\right\rangle\\
			&=-\left(d_{1,0}^G\sigma^{*}\eta\right)\big(\varpi_\perp^\sharp(\sigma_\perp^\ast\theta_C),\Delta\big)\\
			&=-\left(d_{1,0}^G\sigma^{*}\eta\right)\big(\overline{\mathbbm{1}},\Delta\big)\\
			&=-\mathbbm{1}(\eta(\sigma(\Delta)))-\cancel{\sigma^{*}\eta(\text{pr}_{K}[\Delta,\mathbbm{1}])},\\
		\end{align*}
		using Thm.~\ref{theor:Phi_cochain_map} in the first equality and Eq.~\ref{eq:d_1,0} in the last equality. On the other hand,
		\begin{align*}
			\left\langle\theta_C,d_{\nabla^{N\mathcal{F}}}(Z\ \text{mod}\ T\mathcal{F})\big(\sigma(\Delta)\big)\right\rangle&=\theta_C\left(\nabla^{N\mathcal{F}}_{\sigma(\Delta)}Z\ \text{mod}\ T\mathcal{F}\right)\\
			&=\theta_C\big([\sigma(\Delta),Z]\big)\\
			&=\nabla^\ell_{\sigma(\Delta)}\theta_{C}(Z)\\
			&=\big(d_{\nabla^\ell}\theta_C(Z)\big)(\sigma(\Delta)).
		\end{align*}
		In view of Eq.~\eqref{eq:proof:prop:injective:hypothesis}, this shows that $\eta=-d_{\nabla^\ell}(\theta(Z))$. This finishes the proof.
	\end{proof}
	
	%
	%
	
	\begin{remark}
		We compare first order deformations of coisotropic submanifolds  --under the constraint that the diffeomorphism type of the characteristic foliation is fixed--  in symplectic and contact geometry. First, let $C$ be a compact coisotropic submanifold of a symplectic manifold $(M,\omega)$ with characteristic foliation $\mathcal{F}$. As shown in \cite{symplectic}, when deforming $C$ inside the class of coisotropic submanifolds whose characteristic foliation is diffeomorphic to $\mathcal{F}$, first order deformations are closed foliated one-forms $\alpha\in\Omega^{1}(\mathcal{F})$ whose cohomology class lies in the kernel of the transverse differentiation map $d_{\nu}:H^{1}(\mathcal{F})\rightarrow H^{1}(\mathcal{F},N^{*}\mathcal{F})\cong H^{1}(\mathcal{F},N\mathcal{F})$. This kernel is infinite dimensional in general, see \cite[Ex.~4.4]{symplectic}. For the same deformation problem in contact geometry, we showed that first order deformations are closed elements $\eta\in\Omega^{1}(\mathcal{F},\ell)$ whose cohomology class lies in the kernel of $[\Phi]:H^{1}(\mathcal{F},\ell)\rightarrow H^{1}(\mathcal{F},N\mathcal{F})$. By contrast, this kernel is now trivial because of Prop.~\ref{prop:injective}.
	\end{remark}

	We can now refine Def.~\ref{def:first-order-defs} as follows.
	
	\begin{cor}\label{cor:exact}
		Let $C\subset(M,\xi)$ be a compact regular coisotropic submanifold with characteristic foliation $\mathcal{F}$. When deforming $C$ inside $\text{Def}_{\mathcal{F}}(C)$, first order deformations of $C$ are $1$-coboundaries in $\big(\Omega^{\bullet}(\mathcal{F},\ell),d_{\nabla^{\ell}}\big)$.
	\end{cor}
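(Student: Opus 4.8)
The plan is to read off this corollary as an immediate consequence of the injectivity of $[\Phi]$ in degree one, established in Prop.~\ref{prop:injective}, together with the chain-map property of $\Phi_G$ from Thm.~\ref{theor:Phi_cochain_map}. By the provisional Def.~\ref{def:first-order-defs}, a first order deformation of $C$ inside $\text{Def}_{\mathcal{F}}(C)$ is an element $\eta\in\Omega^1(\mathcal{F},\ell)$ satisfying the two conditions $d_{\nabla^\ell}\eta=0$ and $\Phi_G(\eta)$ exact in $(\Omega^\bullet(\mathcal{F},N\mathcal{F}),d_{\nabla^{N\mathcal{F}}})$. So I would start from such an $\eta$ and aim to upgrade the second (weaker) condition into the conclusion that $\eta$ itself is a coboundary.

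First I would use closedness to pass to cohomology: since $d_{\nabla^\ell}\eta=0$, the element $\eta$ represents a well-defined class $[\eta]\in H^1(\mathcal{F},\ell)$. Next, because $\Phi_G$ is a chain map (Thm.~\ref{theor:Phi_cochain_map}), the class of $\Phi_G(\eta)$ in $H^1(\mathcal{F},N\mathcal{F})$ depends only on $[\eta]$ and equals $[\Phi][\eta]$; note that exactness with respect to $d_{\nabla^{N\mathcal{F}}}$ and $-d_{\nabla^{N\mathcal{F}}}$ is the same condition, so the sign appearing in the target complex is irrelevant here. The hypothesis that $\Phi_G(\eta)$ is exact then reads precisely $[\Phi][\eta]=0$ in $H^1(\mathcal{F},N\mathcal{F})$.

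Finally I would invoke Prop.~\ref{prop:injective}, which states that $[\Phi]\colon H^1(\mathcal{F},\ell)\to H^1(\mathcal{F},N\mathcal{F})$ is injective. From $[\Phi][\eta]=0$ injectivity forces $[\eta]=0$ in $H^1(\mathcal{F},\ell)$, which is exactly the assertion that $\eta$ is a $1$-coboundary in $(\Omega^\bullet(\mathcal{F},\ell),d_{\nabla^\ell})$. There is no genuine obstacle at this stage: all the substantive work has already been carried out in constructing the chain map $\Phi_G$, in showing that its induced map $[\Phi]$ is canonical (Thm.~\ref{theor:Phi_cohomology}), and above all in the injectivity argument of Prop.~\ref{prop:injective}. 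The corollary is simply the bookkeeping step that combines these facts, refining the provisional Def.~\ref{def:first-order-defs} into the clean statement that first order deformations inside $\text{Def}_{\mathcal{F}}(C)$ are exactly the trivial classes in $H^1(\mathcal{F},\ell)$.
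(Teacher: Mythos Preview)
Your proposal is correct and matches the paper's approach exactly: the corollary is stated without proof as an immediate refinement of Def.~\ref{def:first-order-defs} via the injectivity in Prop.~\ref{prop:injective}, and your argument spells out precisely this implication.
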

	
	We now interpret this result in terms of the action by contact isotopies of $(M,\xi)$ on the space of coisotropic submanifolds. It is clear that this action restricts to $\text{Def}_{\mathcal{F}}(C)$. In view of Prop.~\ref{prop:equiv}, the above corollary states that a compact regular coisotropic submanifold $C$ is infinitesimally rigid with respect to the action by contact isotopies, if we deform $C$ within the class $\text{Def}_{\mathcal{F}}(C)$. This implies unobstructedness of the deformation problem.

	\begin{cor}\label{cor:unob}
		Let $C\subset(M,\xi)$ be a compact regular coisotropic submanifold with characteristic foliation $\mathcal{F}$. Then $C$ is infinitesimally rigid with respect to the action by contact isotopies, when deforming $C$ inside $\text{Def}_{\mathcal{F}}(C)$. In particular, the deformation problem of $C$ inside $\text{Def}_{\mathcal{F}}(C)$ is unobstructed.
	\end{cor}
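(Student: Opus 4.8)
The plan is to deduce both assertions from Cor.~\ref{cor:exact} together with a converse to Prop.~\ref{prop:equiv}. Recall that Cor.~\ref{cor:exact} identifies the first order deformations of $C$ inside $\text{Def}_{\mathcal{F}}(C)$ with the $1$-coboundaries of $(\Omega^\bullet(\mathcal{F},\ell),d_{\nabla^\ell})$, while Prop.~\ref{prop:equiv} shows that a coisotropic path generated by a contact isotopy has a coboundary as its infinitesimal deformation. What remains is the reverse implication: that every $1$-coboundary is realized as $\dt{\eta}_0$ for a coisotropic path generated by a contact isotopy of the local model $(U,\ker\gamma_G)$. Granting this, infinitesimal rigidity with respect to contact isotopies is immediate, since it asserts precisely that every first order deformation is induced by a contact isotopy; and unobstructedness follows because the realizing path is an honest smooth path of coisotropic sections with the prescribed derivative.

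To realize a coboundary, I would start from a first order deformation $\eta\in\Omega^1(\mathcal{F},\ell)$ of $C$ inside $\text{Def}_{\mathcal{F}}(C)$ and use Cor.~\ref{cor:exact} to write $\eta=-d_{\nabla^\ell}\mu$ for some $\mu\in\Gamma(\ell)$. The quotient line bundle of the contact thickening $(U,\ker\gamma_G)$ is $p^\ast\ell$, which restricts to $\ell$ along the zero section $C$; thus $p^\ast\mu\in\Gamma(p^\ast\ell)$ extends $\mu$, and by Remark~\ref{rem:contactVF} it determines a contact vector field $X_{p^\ast\mu}$ on $(U,\ker\gamma_G)$ characterized by $\gamma_G(X_{p^\ast\mu})=p^\ast\mu$. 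Because $C$ is compact, the flow $\varphi_t$ of $X_{p^\ast\mu}$ takes $C$ to the graph of a coisotropic section $\eta_t$ (coisotropic since $\varphi_t$ is a contactomorphism, and a graph for small $t$) with $\eta_0=0$. Prop.~\ref{prop:equiv} then yields $\dt{\eta}_0=-d_{\nabla^\ell}(\theta_C(X_0))$, where $X_0\in\mathfrak{X}(C)$ generates $\psi_t:=p\circ\varphi_t|_C$, so that $X_0=p_\ast(X_{p^\ast\mu}|_C)$.

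The one computation to carry out — and the only real content beyond assembling earlier results — is the identity $\theta_C(X_0)=\mu$. This is where the explicit local model enters: writing $\gamma_G=p^\ast\theta_C+\theta_G$, the summand $\theta_G$ is linear in the fiber coordinate and hence vanishes along the zero section, so $\gamma_G|_C(v)=\theta_C(p_\ast v)$ for every $v\in T_CU$. Evaluating on $X_{p^\ast\mu}$ and using $\gamma_G(X_{p^\ast\mu})=p^\ast\mu$ gives $\theta_C(X_0)=\theta_C(p_\ast(X_{p^\ast\mu}|_C))=\gamma_G(X_{p^\ast\mu})|_C=\mu$, whence $\dt{\eta}_0=-d_{\nabla^\ell}\mu=\eta$.

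Putting this together, every first order deformation $\eta$ of $C$ inside $\text{Def}_{\mathcal{F}}(C)$ arises as the infinitesimal deformation of the coisotropic path $\eta_t$ generated by the contact isotopy $\varphi_t$, which proves infinitesimal rigidity under contact isotopies; and since $\eta_t$ is a genuine smooth path of coisotropic sections with $\eta_0=0$ and $\dt{\eta}_0=\eta$, the deformation $\eta$ is unobstructed, giving the final claim. I expect the realization step to be the main (if modest) obstacle, as it is the only place that uses the geometry of the thickening rather than pure homological algebra; everything else is a direct appeal to Cor.~\ref{cor:exact} and Prop.~\ref{prop:equiv}.
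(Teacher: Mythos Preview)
Your proposal is correct and follows essentially the same approach as the paper: both start from Cor.~\ref{cor:exact}, write the first order deformation as a coboundary, use the corresponding section of $p^\ast\ell$ to produce a contact vector field on the thickening via Rem.~\ref{rem:contactVF}, flow $C$ by it, and identify the primitive via the vanishing of $\theta_G$ along the zero section together with the formula from the proof of Prop.~\ref{prop:equiv}. The only point you leave implicit is that the resulting path $\eta_t$ lies in $\text{Def}^U_{\mathcal{F}}(C)$ (not merely among coisotropic sections), which the paper spells out by noting that the contactomorphism $\varphi_t$ carries $\mathcal{F}$ to the characteristic foliation of $\text{graph}(\eta_t)$; but this is immediate from your construction and does not constitute a gap.
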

	\begin{proof}
		Denote by $(U,\ker\gamma_G)$ the local model around $C$. Given a first order deformation $d_{\nabla^{\ell}}\lambda$ of $C$ for some $\lambda\in\Gamma(\ell)$, we construct a smooth path $\eta_t$ in $\text{Def}_{\mathcal{F}}^{U}(C)$ such that  $\dt{\eta_0}=d_{\nabla^{\ell}}\lambda$. 
		
		Recall from \S\ref{sec:one} that $\gamma_G$ is a $p^{*}\ell$-valued contact form, hence the section $-p^{*}\lambda\in\Gamma(p^{*}\ell)$ gives rise via the isomorphism \eqref{eq:contact-vf2} to a contact vector field $X_{-p^{*}\lambda}\in\mathfrak{X}(U)$ with flow $\varphi_t$. Compactness of $C$ implies that there exists $\epsilon>0$ such that the flow lines of $X_{-p^{*}\lambda}$ starting at points of $C$ exist up to time $\epsilon$. Shrinking $\epsilon$ if necessary, we may assume that $\varphi_t(C)$ is the graph of a section $\eta_t\in\Gamma(U)$ for each $t\in[0,\epsilon]$. Since $\varphi_t$ is a contactomorphism, it takes the characteristic foliation $\mathcal{F}$ of $C$ to that of the coisotropic submanifold $\text{graph}(\eta_t)$. Hence, $\eta_t$ is a smooth path in $\text{Def}_{\mathcal{F}}^{U}(C)$ as required in Def.~\ref{def:smoothpath}, by the proof of Lemma~\ref{lem:diffeos}. 
		
		\vspace{0.2cm}
		\noindent
		\underline{Claim:} $\dt{\eta_0}=d_{\nabla^{l}}\lambda$.
		
		\vspace{0.1cm}
		\noindent
		To prove the claim, we proceed as in the proof of Prop.~\ref{prop:equiv}. Setting $\psi_t:=p\circ\varphi_t|_{C}$, we obtain an isotopy of $C$ satisfying
		\begin{equation}\label{eq:iso-rel}
			\eta_t=\varphi_t\circ\psi_t^{-1}.
		\end{equation}
		Let $V_t\in\mathfrak{X}(C)$ be the time-dependent vector field of $\psi_t$. The proof of Prop.~\ref{prop:equiv} shows that
		\[
		\dt{\eta_0}=-d_{\nabla^{\ell}}(\theta_C(V_0)),
		\]
		hence the claim is proved once we show that $\lambda=-\theta_C(V_0)$. To do so, note that the equation \eqref{eq:iso-rel} implies that
		\[
		\dt{\eta_0}=\left.X_{-p^{*}\lambda}\right|_{C}-V_{0},
		\]
		which is an equality of elements in $\Gamma(TU|_{C})$. Evaluating $\gamma_G=p^{*}\theta_C+\theta_G$ on both sides of this equality gives
		\[
		0=-(p^{*}\lambda)|_{C}-\theta_C(V_0)=-\lambda-\theta_C(V_0).
		\]
		Here we use that $\theta_G$ vanishes along the zero section $C$, and that $\gamma_G(X_{-p^{*}\lambda}) =-p^{*}\lambda$ as explained in Rem.~\ref{rem:contactVF}. This shows that $\lambda=-\theta_C(V_0)$, so the claim is proved.
	\end{proof}

	\section{Rigidity}
	\label{sec:rigidity}

	In the previous section, we saw that a compact regular coisotropic submanifold $C\subset(M,\xi)$ is infinitesimally rigid in $\text{Def}_{\mathcal{F}}(C)$. This raises the question whether $C$ is actually rigid in $\text{Def}_{\mathcal{F}}(C)$. We show below that this is indeed the case. More precisely, we prove that every smooth path $C_t$ in $\text{Def}_{\mathcal{F}}(C)$ deforming $C$ is generated by a contact isotopy of $(M,\xi)$, for small enough times $t$. This extends a rigidity result due to the second author \cite{rigidity}, which concerns the special case in which the characteristic foliation $\mathcal{F}$ is given by a fibration.
	The theorem to which we devote this section is the following.
	
	\begin{thm}
		\label{thm:main}
		Let $C\subset(M,\xi)$ be a compact regular coisotropic submanifold, and let $C_t$ be a smooth path in $\text{Def}_{\mathcal{F}}(C)$ with $C_0=C$.
		Then there exists an isotopy of contactomorphisms $\psi_t\colon(M,\xi)\to(M,\xi)$, locally defined around $C$, such that $C_t=\psi_t(C)$ for small enough $t$.
	\end{thm}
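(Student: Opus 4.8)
The plan is to combine the two analytic inputs advertised in the introduction---Gray stability for pre-contact structures (Prop.~\ref{prop:isotopy}) and uniqueness of coisotropic embeddings (Prop.~\ref{prop:coisotropic_embedding})---with the defining feature of $\text{Def}_{\mathcal{F}}(C)$, namely the availability of foliated diffeomorphisms. First I would unwind the definition of a smooth path in $\text{Def}_{\mathcal{F}}(C)$ (Def.~\ref{def:smoothness}): there is a smooth path of embeddings $\Phi_t\colon C\hookrightarrow M$ with $C_t=\Phi_t(C)$ and $\Phi_t\colon(C,\mathcal{F})\to(C_t,\mathcal{F}_t)$ a foliated diffeomorphism, and after precomposing the family with $\Phi_0^{-1}$ I may assume $\Phi_0=\operatorname{id}_C$. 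Each $C_t$ is regular coisotropic (by membership in $\text{Def}_{\mathcal{F}}(C)$) and, for small $t$, stays $\mathcal{C}^1$-close to $C$, so all constructions take place in a fixed neighborhood. Writing $\xi_{C_t}=TC_t\cap\xi$ for the induced pre-contact structure with structure form $\theta_{C_t}$, the crucial observation is that pulling back along $\Phi_t$ produces a smooth family $\vartheta_t:=\Phi_t^{*}\theta_{C_t}$ of pre-contact forms on the \emph{fixed} manifold $C$ whose characteristic foliation is $\Phi_t^{-1}(\mathcal{F}_t)=\mathcal{F}$ for all $t$, with $\vartheta_0=\theta_C$. Thus the foliated diffeomorphisms let me trade the moving submanifolds $C_t$ for a family of pre-contact structures on $C$ with constant characteristic foliation. (Here I would note the minor bookkeeping that $\Phi_t^{*}\ell_t$ is a smooth family of line bundles over the compact base $C$, all isomorphic to $\ell$, so the $\vartheta_t$ may be regarded as $\ell$-valued.)

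Next I would feed this family into pre-contact Gray stability. Since $C$ is compact and the distributions $\ker\vartheta_t$ all share the characteristic foliation $\mathcal{F}$, Prop.~\ref{prop:isotopy} yields an isotopy $g_t\colon C\to C$ with $g_0=\operatorname{id}$ and $(g_t)_{*}\xi_C=\ker\vartheta_t$. Because $\ker\vartheta_t=(\Phi_t)_{*}^{-1}\xi_{C_t}$, the composite $f_t:=\Phi_t\circ g_t\colon C\to C_t$ then satisfies $(f_t)_{*}\xi_C=\xi_{C_t}$; that is, $f_t$ is a smooth family of isomorphisms of pre-contact manifolds $(C,\xi_C)\overset{\sim}{\longrightarrow}(C_t,\xi_{C_t})$ with $f_0=\operatorname{id}_C$.

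Finally I would upgrade $f_t$ to an ambient contact isotopy. By the uniqueness of coisotropic embeddings (Prop.~\ref{prop:coisotropic_embedding}), each isomorphism of induced pre-contact structures $f_t\colon(C,\xi_C)\to(C_t,\xi_{C_t})$ extends to a contactomorphism $\psi_t$ between neighborhoods of $C$ and $C_t$ in $(M,\xi)$ with $\psi_t|_C=f_t$; and I would arrange that this extension depends smoothly on $t$ with $\psi_0=\operatorname{id}$ (for instance by running the construction of Prop.~\ref{prop:coisotropic_embedding} inside the single local model $(U,\ker\gamma_G)$ of Thm.~\ref{thm:normalform} and appealing to its naturality). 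Since $\psi_t(C)=f_t(C)=C_t$, these locally defined contactomorphisms are precisely the desired isotopy, which completes the proof.

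The main obstacle is conceptual rather than computational: pre-contact Gray stability applies only to families with a \emph{fixed} characteristic foliation, and it is exactly the hypothesis $C_t\in\text{Def}_{\mathcal{F}}(C)$---through the foliated diffeomorphisms $\Phi_t$---that lets me normalize the foliation to the constant $\mathcal{F}$ before invoking Prop.~\ref{prop:isotopy}. This is the step that would fail for an arbitrary coisotropic path, consistent with the paper's thesis that constancy of the characteristic foliation is the only obstruction to rigidity. The remaining care lies in the smooth-in-$t$ bookkeeping: identifying the line bundles $\Phi_t^{*}\ell_t$ with $\ell$, and ensuring both the smoothness and the normalization $\psi_0=\operatorname{id}$ of the extensions produced by Prop.~\ref{prop:coisotropic_embedding}.
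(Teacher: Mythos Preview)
Your overall strategy---apply Prop.~\ref{prop:isotopy} to obtain an isomorphism of pre-contact structures, then promote it to an ambient contactomorphism via Prop.~\ref{prop:coisotropic_embedding}---is precisely the paper's, and your observation that the foliated diffeomorphisms are what allow Prop.~\ref{prop:isotopy} to fire is exactly right. However, there is a genuine gap in your final step.

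Prop.~\ref{prop:coisotropic_embedding} is \emph{not} the statement you invoke. It does not say that a pre-contact isomorphism $f_t\colon(C,\xi_C)\to(C_t,\xi_{C_t})$ extends to a contactomorphism; rather, it is a Moser-type statement for a smooth path of symplectic Atiyah forms $\Omega_t$ on a \emph{fixed} line bundle over a \emph{fixed} manifold, with a \emph{fixed} coisotropic $C$ satisfying $I^{*}\Omega_t=I^{*}\Omega_0$. To put yourself in that setting you must first extend $f_t$ to some (non-contact) line bundle isomorphism $\Psi_t$ between neighborhoods, pull back the ambient symplectic Atiyah form, and then verify that $I^{*}(\Psi_t^{*}\Omega)$ is \emph{constant}. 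This verification is the substantive content of the paper's proof: the explicit extension $\Psi_t=T_{\eta_t}\circ\widetilde{\Phi}_t$ in the local model and the computation showing $I^{*}(\Psi_t^{*}\Omega_G)=\Phi_t^{*}\varpi_t=\varpi_0$. Your phrase ``running the construction inside the local model and appealing to its naturality'' gestures at this but does not supply it.

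Relatedly, you discard information from Prop.~\ref{prop:isotopy} that you will need. That proposition produces a line bundle automorphism $\Phi_t\colon\ell\to\ell$ with $\Phi_t^{*}\varpi_t=\varpi_0$ (equivalently $\Phi_t^{*}\theta_t=\theta_0$), not merely a diffeomorphism $g_t$ with $(g_t)_{*}\xi_C=\ker\vartheta_t$. The constancy check $I^{*}(\Psi_t^{*}\Omega_G)=\varpi_0$ uses the full equality $\Phi_t^{*}\varpi_t=\varpi_0$ of Atiyah $2$-forms; the distribution-level statement you retain is not enough to conclude that the induced pre-symplectic Atiyah form is independent of $t$. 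So the fix is to keep the line bundle automorphism $\Phi_t$ throughout, extend it to the total space of the local model as the paper does, and only then invoke Prop.~\ref{prop:coisotropic_embedding} in its stated form.
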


	The proof of this theorem relies on two results which may be interesting in their own right.
	The first result is an extension of the Gray stability theorem for contact structures \cite[Thm.~2.2.2]{geiges} to the case of pre-contact structures (see Prop.~\ref{prop:isotopy}).
	The second one is a uniqueness statement concerning coisotropic embeddings in contact manifolds (see Prop.~\ref{prop:coisotropic_embedding}). To prove the announced extension of Gray stability, we need an auxiliary lemma.

	\color{black}
	
	\begin{lemma}
		\label{lem:compp}
		Given a line bundle $\ell\to C$, consider a smooth path of pre-contact forms $\theta_t\in\Omega^1(C,\ell)$.
		Denote by $\mathcal{F}_t$ the characteristic foliation of $(C,\theta_t)$.
		Assume that there exists an isotopy of foliated diffeomorphisms $\underline{\smash{\varphi}}_t\colon(C,\mathcal{F}_0)\longrightarrow(C,\mathcal{F}_t)$ with corresponding time-dependent vector field $V_t\in\mathfrak{X}(C)$.
		If $Z\in\Gamma(T\mathcal{F}_t)$, then
		\begin{equation*}
			\left(\frac{d}{dt}\theta_t\right)(Z)=\theta_t([V_t,Z]).
		\end{equation*}
	\end{lemma}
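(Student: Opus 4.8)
The plan is to use the foliated isotopy $\underline{\varphi}_t$ to trade the time-derivative of $\theta_t$ in directions tangent to $\mathcal{F}_t$ for a Lie derivative along $V_t$. The single structural fact the whole argument rests on is that the characteristic distribution always sits inside the pre-contact distribution, so that $\theta_t(Z)=0$ for every $Z\in\Gamma(T\mathcal{F}_t)$; the point is then to differentiate this vanishing in $t$, the only subtlety being that the foliation $\mathcal{F}_t$ itself moves with $t$.

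First I would fix a time $t$ and, given $Z\in\Gamma(T\mathcal{F}_t)$, extend it to a time-parametrised family that stays tangent to the moving foliation. Writing $\rho_{u,t}$ for the flow of $V$ from time $t$ to time $u$, so that $\underline{\varphi}_u=\rho_{u,t}\circ\underline{\varphi}_t$ and $\rho_{t,t}=\operatorname{id}$, the diffeomorphism $\rho_{u,t}=\underline{\varphi}_u\circ\underline{\varphi}_t^{-1}$ carries $T\mathcal{F}_t$ onto $T\mathcal{F}_u$ because $\underline{\varphi}_u$ and $\underline{\varphi}_t$ are foliated. Hence $Z_u:=(\rho_{u,t})_\ast Z\in\Gamma(T\mathcal{F}_u)$, and by the structural fact above the $\ell$-valued function $u\mapsto\theta_u(Z_u)$ is identically zero near $u=t$. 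Differentiating $\theta_u(Z_u)=0$ at $u=t$, and using that the push-forward of a vector field along a flow differentiates to minus the Lie derivative, i.e.\ $\tfrac{d}{du}\big|_{u=t}(\rho_{u,t})_\ast Z=-[V_t,Z]$, the Leibniz rule yields $0=(\tfrac{d}{dt}\theta_t)(Z)-\theta_t([V_t,Z])$, which is precisely the claimed identity. Equivalently, one may package this through $\tfrac{d}{du}\big|_{u=t}\rho_{u,t}^\ast\theta_u=\tfrac{d}{dt}\theta_t+\mathcal{L}_{V_t}\theta_t$, observe that $\rho_{u,t}^\ast\theta_u$ annihilates $T\mathcal{F}_t$ for every $u$, and expand $(\mathcal{L}_{V_t}\theta_t)(Z)=-\theta_t([V_t,Z])$ using $\theta_t(Z)\equiv 0$.

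The only genuinely delicate point is that $\theta_t$ is $\ell$-valued, so both $\tfrac{d}{dt}\theta_t$ and the Leibniz rule above must be interpreted with care; I expect this to be the main, if minor, obstacle. The clean way around it is to compute in a local frame of $\ell$, which turns each $\theta_u$ into an ordinary $\mathbb{R}$-valued one-form: all constructions are pointwise on $C$ and the desired identity is frame-independent, so no auxiliary connection on $\ell$ is needed and the pullback subtleties of the bracketed alternative are sidestepped. The remaining inputs—the flow decomposition $\underline{\varphi}_u=\rho_{u,t}\circ\underline{\varphi}_t$, which lets one argue at an arbitrary time $t$ rather than only at $t=0$, and the sign in the derivative of a push-forward—are routine.
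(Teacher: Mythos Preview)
Your proposal is correct and follows essentially the same strategy as the paper: extend $Z$ to a family $Z_u\in\Gamma(T\mathcal{F}_u)$ by pushing forward along the isotopy, differentiate the identity $\theta_u(Z_u)=0$ at $u=t$, and use that the derivative of the push-forward is $-[V_t,Z]$. The paper writes this via $W_s=(\underline{\varphi}_s)_\ast X$ with $X=(\underline{\varphi}_t)^{-1}_\ast Z$ rather than via the two-parameter flow $\rho_{u,t}=\underline{\varphi}_u\circ\underline{\varphi}_t^{-1}$, but these are the same family; your remark about passing to a local frame of $\ell$ to justify the Leibniz rule is a sensible addition that the paper leaves implicit.
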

	\begin{proof}
		We know that $Z=(\underline{\smash\varphi}_t)_{*}(X)$ for some $X\in\Gamma(T\mathcal{F}_{0})$. Consider the family of vector fields $W_s\in\Gamma(T\mathcal{F}_{s})$ for $s\in[0,1]$ defined by
		\[
		W_s=(\underline{\smash\varphi}_s)_{*}(X).
		\]
		In particular, we have $W_t=Z$. Differentiating the equality $\theta_s(W_s)=0$ at time $s=t$ gives
		\begin{align*}
			0&=\left(\left.\frac{d}{ds}\right|_{s=t}\theta_s\right)(W_t)+\theta_t\left(\left.\frac{d}{ds}\right|_{s=t}W_s\right)=\left(\frac{d}{dt}\theta_t\right)(Z)+\theta_t\left(\left.\frac{d}{ds}\right|_{s=t}W_s\right).
		\end{align*}
		The proof is finished if we show that
		\begin{equation}\label{eq:time-dep-vec}
			\left.\frac{d}{ds}\right|_{s=t}W_s=-[V_t,Z].
		\end{equation}
		Denoting by $U_t$ the time-dependent vector field corresponding to the inverse isotopy $\underline{\smash\varphi}_t^{-1}$, we have
		\begin{equation}
			\label{eq:dds}
			\left.\frac{d}{ds}\right|_{s=t}W_s=\left.\frac{d}{ds}\right|_{s=t}(\underline{\smash\varphi}_s^{-1})^{*}(X)=(\underline{\smash\varphi}_t^{-1})^{*}\big([U_t,X]\big)=\big[(\underline{\smash\varphi}_t^{-1})^{*}(U_t),Z\big].
		\end{equation}
		It is well-known that the time-dependent vector fields $U_t$ and $V_t$ are related by
		\[
		U_t=-(\underline{\smash\varphi}_t)^{*}V_t,
		\]
		see \cite[\S 1.1]{banyaga}. Inserting this equality into \eqref{eq:dds}, we see that the equation \eqref{eq:time-dep-vec} holds.
	\end{proof}

	We can now state and prove the anticipated generalization of the Gray stability theorem.
	In what follows, given a line bundle automorphism $\Phi\colon\ell\to\ell$, we use the pushforward of derivations $D\Phi\colon D\ell\rightarrow D\ell$ and the pullback of Atiyah forms $\Phi^\ast\colon\Omega^\bullet_D(\ell)\rightarrow\Omega^\bullet_D(\ell)$, which are recalled in the Appendix.

	\begin{prop}
		\label{prop:isotopy}
		Given a line bundle $\ell\to C$, consider a smooth path of pre-contact forms $\theta_t\in\Omega^1(C,\ell)$ and let $\varpi_t\in\Omega^2_D(\ell)$ be the corresponding smooth path of pre-symplectic Atiyah forms.
		Denote by $\mathcal{F}_t$ the characteristic foliation of $(C,\theta_t)$ and by $K_t$ the kernel of $\varpi_t$, so that $T\mathcal{F}_t=\sigma(K_t)$.
		Then the following are equivalent:
		\begin{enumerate}[label=(\arabic*)]
			\item\label{enumitem:prop:isotopy:1}
			there exists an isotopy of foliated diffeomorphisms $\underline{\smash{\varphi}}_t\colon(C,\mathcal{F}_0)\longrightarrow(C,\mathcal{F}_t)$,
			\item\label{enumitem:prop:isotopy:2}
			there exists an isotopy of LB-automorphisms $\Phi_t\colon\ell\to\ell$ such that $\Phi_t^\ast\varpi_t=\varpi_0$,
			\item\label{enumitem:prop:isotopy:3}
			there exists an isotopy of LB-automorphisms $\Phi_t\colon\ell\to\ell$ such that $\Phi_t^\ast\theta_t=\theta_0$.
		\end{enumerate}
	\end{prop}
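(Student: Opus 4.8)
The plan is to establish the cycle of implications with the bulk of the work concentrated in $(1)\Rightarrow(2)$, which is a Moser argument carried out at the level of Atiyah forms. Throughout I abbreviate $\vartheta_t:=\sigma^\ast\theta_t\in\Omega^1_D(\ell)$, so that $\varpi_t=d_D\vartheta_t$, and I use three standard facts from the Atiyah-form formalism recalled in the Appendix: the identity $\iota_{\mathbbm 1}\varpi_t=\vartheta_t$ for the identity derivation $\mathbbm 1\in\Gamma(D\ell)$; the fact that every LB-automorphism $\Phi$ preserves the identity derivation, $D\Phi(\mathbbm 1)=\mathbbm 1$; and the compatibility $\sigma^\ast(\Phi^\ast\theta)=(D\Phi)^\ast(\sigma^\ast\theta)$ together with injectivity of $\sigma^\ast$ on $1$-forms.

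Granting these, the equivalence $(2)\Leftrightarrow(3)$ is immediate. If $\Phi_t^\ast\varpi_t=\varpi_0$, then contracting with $\mathbbm 1$ and using $D\Phi_t(\mathbbm 1)=\mathbbm 1$ gives $\Phi_t^\ast\vartheta_t=\Phi_t^\ast(\iota_{\mathbbm 1}\varpi_t)=\iota_{\mathbbm 1}(\Phi_t^\ast\varpi_t)=\iota_{\mathbbm 1}\varpi_0=\vartheta_0$, whence $\Phi_t^\ast\theta_t=\theta_0$ by injectivity of $\sigma^\ast$; conversely $\Phi_t^\ast\theta_t=\theta_0$ yields $\Phi_t^\ast\varpi_t=d_D\Phi_t^\ast\vartheta_t=d_D\vartheta_0=\varpi_0$. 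For $(2)\Rightarrow(1)$ I take $\underline{\smash{\varphi}}_t$ to be the base diffeomorphism of $\Phi_t$: the relation $(D\Phi_t)^\ast\varpi_t=\varpi_0$ forces $D\Phi_t(K_0)=K_t$, and since $\sigma\circ D\Phi_t=d\underline{\smash{\varphi}}_t\circ\sigma$ we obtain $d\underline{\smash{\varphi}}_t(T\mathcal{F}_0)=\sigma(D\Phi_t(K_0))=\sigma(K_t)=T\mathcal{F}_t$, so $\underline{\smash{\varphi}}_t$ is the desired foliated isotopy.

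The heart of the matter is $(1)\Rightarrow(2)$. Starting from a foliated isotopy $\underline{\smash{\varphi}}_t$ with time-dependent vector field $V_t$, I search for a time-dependent derivation $\Delta_t\in\Gamma(D\ell)$ whose flow $\Phi_t$ satisfies $\Phi_t^\ast\varpi_t=\varpi_0$. Differentiating and using $d_D\varpi_t=0$, the Moser equation reduces to $d_D\iota_{\Delta_t}\varpi_t=-\dot\varpi_t=-d_D\dot\vartheta_t$, so it suffices to solve the pointwise equation $\iota_{\Delta_t}\varpi_t=-\dot\vartheta_t+d_D g_t$ for a suitable $g_t\in\Gamma(\ell)$. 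Since $\ker\varpi_t^\flat=K_t$ and $\operatorname{im}\varpi_t^\flat$ is exactly the annihilator of $K_t$, such a $\Delta_t$ exists precisely when the right-hand side annihilates $K_t$. The decisive choice is $g_t:=-\theta_t(V_t)$: for $\Delta\in\Gamma(K_t)$ with $Z:=\sigma(\Delta)\in\Gamma(T\mathcal{F}_t)$, using $\nabla^\ell_\Delta=\nabla^\ell_{\sigma(\Delta)}$ and the defining formula $\nabla^\ell_Z(\theta_t(V_t))=\theta_t([Z,V_t])$, I get
\[
(d_D g_t)(\Delta)=\Delta(g_t)=-\theta_t([Z,V_t])=\theta_t([V_t,Z]),
\]
while $\dot\vartheta_t(\Delta)=\dot\theta_t(Z)$. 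Lemma~\ref{lem:compp} supplies exactly the identity $\dot\theta_t(Z)=\theta_t([V_t,Z])$, so $(-\dot\vartheta_t+d_D g_t)(\Delta)=0$ and the algebraic equation is solvable. Choosing $\Delta_t$ smoothly (it is determined only modulo $K_t$) and integrating it — here compactness of $C$ guarantees completeness of the flow — produces an isotopy $\Phi_t$ with $\Phi_0=\operatorname{id}$ and $\Phi_t^\ast\varpi_t=\varpi_0$.

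I expect the main obstacle to be the solvability of the Moser equation for a genuinely degenerate (pre-symplectic) Atiyah form: in contrast to the contact case, $\varpi_t^\flat$ is not invertible, so a naive primitive of $-\dot\vartheta_t$ need not annihilate the kernel $K_t$ and no solution exists in general. The content of Lemma~\ref{lem:compp} is precisely that the ``contact Hamiltonian'' correction $g_t=-\theta_t(V_t)$ extracted from the given foliated isotopy cancels the obstruction of $\dot\vartheta_t$ to vanishing on $K_t$; this is the step where hypothesis $(1)$ is used in an essential way. Once this is secured, all remaining ingredients ($\iota_{\mathbbm 1}\varpi_t=\vartheta_t$, preservation of $\mathbbm 1$ by LB-automorphisms, and the identification of $\operatorname{im}\varpi_t^\flat$ with the annihilator of $K_t$) are routine consequences of the formalism.
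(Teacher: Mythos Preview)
Your proposal is correct and follows essentially the same route as the paper's proof: the equivalence $(2)\Leftrightarrow(3)$ via $\iota_{\mathbbm 1}\varpi_t=\sigma^\ast\theta_t$ and naturality of $d_D$, the implication $(2)\Rightarrow(1)$ by passing to the base diffeomorphism, and $(1)\Rightarrow(2)$ by a Moser argument on Atiyah forms with the correction term $g_t=-\theta_t(V_t)$ and Lemma~\ref{lem:compp} to verify that the right-hand side annihilates $K_t$. The paper phrases the solvability step using $j^1\lambda_t$ in place of your $d_Dg_t$ (these agree), and is slightly more explicit about obtaining a smooth choice of $\Delta_t$ by fixing a complement to the kernel bundle over $C\times[0,1]$, but otherwise the arguments coincide.
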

	\begin{proof}
		\ref{enumitem:prop:isotopy:2}$\Longleftrightarrow$\ref{enumitem:prop:isotopy:3}
		This is easily proved using the relation $\varpi_t=d_D(\sigma^\ast\theta_t)$ or equivalently, the relation $\iota_{\mathbbm{1}}\varpi_t=\sigma^\ast\theta_t$, together with the natural identities $\Phi_t^\ast\iota_{\mathbbm{1}}=\iota_{\mathbbm{1}}\Phi_t^\ast$ and $\Phi_t^\ast d_{D}=d_{D}\Phi_t^\ast$.
		
		\ref{enumitem:prop:isotopy:2}$\Longrightarrow$\ref{enumitem:prop:isotopy:1}
		Denote by $\underline{\smash{\varphi}}_t\colon C\to C$ the isotopy of diffeomorphisms underlying $\Phi_t\colon\ell\to\ell$.
		Then the hypothesis $\varpi_0=\Phi_t^\ast\varpi_t$ can be written explicitly as follows:
		\begin{equation*}
			(\varpi_{0})_{x}(\delta_x,\delta_x')=(\Phi_{t})_{x}^{-1}\left((\varpi_{t})_{\underline{\smash{\varphi}}_t(x)}\left((D_x\Phi_t)\delta_x,(D_x\Phi_t)\delta_x'\right)\right),
		\end{equation*}
		for all $x\in C$ and $\delta_x,\delta_x'\in D_x\ell$.
		This implies that the isomorphism $D_x\Phi_t\colon D_x\ell\overset{\sim}{\longrightarrow} D_{\underline{\smash{\varphi}}_t(x)}\ell$ induces an isomorphism
		\begin{equation*}
			D_x\Phi_t:(K_{0})_{x}\overset{\sim}{\longrightarrow}(K_{t})_{\underline{\smash{\varphi}}_t(x)}.
		\end{equation*}
		Since $\sigma\colon D\ell\to TC$ induces a Lie algebroid isomorphism $K_t\overset{\sim}{\rightarrow}T\mathcal{F}_t$ and $\sigma\circ(D\Phi_t)=(\underline{\smash{\varphi}}_t)_{*}\circ\sigma$, it follows that $(\underline{\smash{\varphi}}_t)_{*}$ descends to an isomorphism
		\begin{equation*}
			(\underline{\smash{\varphi}}_t)_{*}:T_x\mathcal{F}_0\overset{\sim}{\longrightarrow}T_{\underline{\smash{\varphi}}_t(x)}\mathcal{F}_t.
		\end{equation*}
		
		\ref{enumitem:prop:isotopy:1}$\Longrightarrow$\ref{enumitem:prop:isotopy:2}
		Following the usual Moser argument, we will obtain the desidered isotopy of LB automorphisms as the flow of a suitable smooth path in $\Gamma(D\ell)$.
		For any isotopy of LB automorphisms $\Phi_t\colon\ell\to\ell$ generated by a smooth path $\square_t\in\Gamma(D\ell)$, we can compute
		\begin{equation*}
			\frac{d}{dt}\Phi_t^\ast\varpi_t=\Phi_t^\ast\left(\mathcal{L}_{\square_t}\varpi_t+\dt{\varpi}_t\right)=\Phi_t^\ast d_D\left(\iota_{\square_t}\varpi_t+\sigma^\ast\dt{\theta}_t\right),
		\end{equation*}
		where we also used Prop.~\ref{prop:relation-luca}. 
		Since the der-complex is acyclic (see Rem.~\ref{rem:contracting-homotopy}), this shows that there is an isotopy $\Phi_t$ satisfying $\Phi_t^\ast\varpi_t=\varpi_0$ if and only if we can find smooth paths $\square_t\in\Gamma(D\ell)$ and $\lambda_t\in\Gamma(\ell)$ such that
		\begin{equation}\label{eq:find2}
			\iota_{\square_t}\varpi_t=j^1\lambda_t-\sigma^\ast\dt\theta_t.
		\end{equation}
		Here we used that $d_{D}\lambda=j^1\lambda$ for any $\lambda\in\Gamma(\ell)$ under the identification $(D\ell)^\ast\otimes\ell\cong J^1\ell$.
		
		Denote by $V_t\in\mathfrak{X}(C)$ the time-dependent vector field generating the isotopy $\underline{\smash{\varphi}}_t$.
		Setting $\lambda_t=-\theta_t(V_t)$, we claim that the equation \eqref{eq:find2} can be solved for $\square_t$. To prove this, we have to check that
		$
		j^1\lambda_t-\sigma^\ast\dt\theta_t\in\Gamma(J^1\ell)
		$
		belongs to the image of $\varpi_t^\flat\colon D\ell\rightarrow J^1\ell$, which coincides with the annihilator of $K_t:=\ker\varpi_t$.
		For any $\Delta\in\Gamma(K_t)$, we can compute
		\begin{align*}
			\left\langle\sigma^\ast\dt\theta_t+j^{1}\big(\theta_t(V_t)\big),\Delta\right\rangle&=\dt\theta_t(\sigma(\Delta))+\Delta\big(\theta_t(V_t)\big)\\
			&=\dt\theta_t(\sigma(\Delta))+\nabla^{\ell}_{\sigma(\triangle)}\theta_t(V_t)\\
			&=\dt\theta_t(\sigma(\Delta))+\theta_t\big([\sigma(\triangle),V_t]\big)\\
			&=\theta_t\big([V_t,\sigma(\triangle)]\big)+\theta_t\big([\sigma(\triangle),V_t]\big)\\
			&=0,
		\end{align*}
		where we used the diagram \eqref{eq:nabla^ell} in the second equality and Lemma \ref{lem:compp} in the fourth equality.
		
		This shows that setting $\lambda_t:=-\theta_t(V_t)$, the equation \eqref{eq:find2} can be solved for $\square_t$.
		Moreover, the kernels of $\varpi_t$ for $t\in[0,1]$ form a vector bundle over $C\times[0,1]$, and one can choose a complementary subbundle inside the pullback bundle $\operatorname{pr}_1^\ast D\ell$, where $\operatorname{pr}_1\colon C\times[0,1]\rightarrow C$ denotes the projection.
		Requiring that $\square_t$ takes values in this subbundle uniquely determines $\square_t$, and shows in particular that $\square_t$ can be chosen in a smooth manner.
		Since $C$ is compact, we can integrate $\square_t$ globally to an isotopy of LB automorphisms $\Phi_t\colon\ell\to\ell$ satisfying $\Phi_t^\ast\varpi_t=\varpi_0$ for all $t$. This finishes the proof.
	\end{proof}
	
	\begin{remark}
		A weaker version of the implication \ref{enumitem:prop:isotopy:1}$\Longrightarrow$\ref{enumitem:prop:isotopy:2} in the above proposition has appeared in~\cite[Section 4.2]{rigidity}.
		There one claims (without proof) that, if the foliation $\mathcal{F}_0$ is simple, one can find an isotopy of LB automorphisms $\Phi_t\colon\ell\to\ell$ satisfying $D\Phi_t(K_0)=K_t$ for any $t$.
		We proved a stronger statement, namely that the isotopy $\Phi_t$ can be chosen so that $\Phi_t^\ast\varpi_t=\varpi_0$ for any $t$, without imposing any restrictions on the foliation $\mathcal{F}_0$.
	\end{remark}

	As anticipated, the second ingredient for the proof of our main result is the following uniqueness statement concerning coisotropic embeddings in contact manifolds.
	
	\begin{prop}
		\label{prop:coisotropic_embedding}
		Let $L\to M$ be a line bundle,  $\Omega_t\in\Omega_D^2(L)$ a smooth path of symplectic Atiyah forms for $0\leq t\leq 1$, and $C\subseteq M$ a submanifold.
		Set $\ell:=L|_C$ and let $I\colon\ell\to L$ be the inclusion.
		Assume that $C$ is a compact regular coisotropic submanifold of $(M,\Omega_t)$ for all $0\leq t\leq 1$, such that additionally
		\begin{equation}
			\label{eq:prop:coisotropic_embedding:1}
			I^\ast\Omega_t=I^\ast\Omega_0.
		\end{equation}
		Then there exist a neighborhood $\mathcal{U}\subset M$ of $C$ and a path of LB morphisms $\Psi_t\colon L|_\mathcal{U}\to L$ covering a path of embeddings $\psi_t\colon\mathcal{U}\to M$, such that
		\begin{equation}\label{eq:prop:coisotropic_embedding:2}
			\Psi_0=\operatorname{id}_{\mathcal{U}},\quad\text{and}\quad	
			\Psi_t^\ast\Omega_t=\Omega_0|_{\mathcal{U}}.
		\end{equation}
		Moreover, $\Psi_t$ induces an isotopy of LB automorphisms $\Psi_t|_C\colon\ell\to\ell$ covering the isotopy of diffeomorphisms $\psi_t|_C\colon C\to C$, such that 
		$
		\Psi_t|_C^\ast(I^\ast\Omega_0)=I^\ast\Omega_0.
		$
	\end{prop}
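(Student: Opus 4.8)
The strategy is a parametric Moser argument in the der-complex $(\Omega^\bullet_D(L),d_D)$, carried out relative to $C$ and feeding on coisotropy in the form of Prop.~\ref{prop:coiso-atiyah}. I look for $\Psi_t$ as the flow of a smooth time-dependent derivation $\square_t\in\Gamma(DL)$ defined near $C$, whose symbol $\sigma(\square_t)$ then generates $\psi_t$. Exactly as in the proof of Prop.~\ref{prop:isotopy}, Cartan calculus for Atiyah forms together with $d_D\Omega_t=0$ gives
\[
\frac{d}{dt}\Psi_t^\ast\Omega_t=\Psi_t^\ast\big(\mathcal{L}_{\square_t}\Omega_t+\dt{\Omega}_t\big)=\Psi_t^\ast\, d_D\big(\iota_{\square_t}\Omega_t+\beta_t\big),
\]
where $\beta_t\in\Omega^1_D(L)$ is any primitive of $\dt{\Omega}_t$, i.e.\ $d_D\beta_t=\dt{\Omega}_t$. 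Since $\Omega_t$ is symplectic, $\Omega_t^\flat\colon DL\to J^1L$ is invertible, so setting $\square_t:=-(\Omega_t^\flat)^{-1}(\beta_t)$ yields $\iota_{\square_t}\Omega_t=-\beta_t$ and hence $\tfrac{d}{dt}\Psi_t^\ast\Omega_t=0$, giving $\Psi_t^\ast\Omega_t=\Omega_0$ once $\Psi_t$ is shown to exist.

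Two things must be arranged: that $\beta_t$ can be chosen with $I^\ast\beta_t=0$ and smoothly in $t$, and that the resulting $\square_t$ is tangent to $C$. For the first, differentiating the hypothesis~\eqref{eq:prop:coisotropic_embedding:1} yields $I^\ast\dt{\Omega}_t=0$, while $d_D\dt{\Omega}_t=\tfrac{d}{dt}d_D\Omega_t=0$. I would then invoke acyclicity of the der-complex (Rem.~\ref{rem:contracting-homotopy}) twice: first to write $\dt{\Omega}_t=d_D\alpha_t$ for a primitive $\alpha_t$ depending smoothly on $t$; then, observing that $I^\ast\alpha_t$ is a $d_D$-closed Atiyah $1$-form on $\ell$, to write $I^\ast\alpha_t=d_D\lambda_t=j^1\lambda_t$ for some $\lambda_t\in\Gamma(\ell)$. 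Extending $\lambda_t$ to $\widetilde{\lambda}_t\in\Gamma(L)$ near $C$ and setting $\beta_t:=\alpha_t-d_D\widetilde{\lambda}_t$ gives $d_D\beta_t=\dt{\Omega}_t$ and $I^\ast\beta_t=I^\ast\alpha_t-j^1\lambda_t=0$. This is a relative Poincaré lemma for the der-complex.

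The tangency of $\square_t$ to $C$ is where coisotropy enters. Restricting $\iota_{\square_t}\Omega_t=-\beta_t$ to $C$ and pairing with an arbitrary derivation in $DL_C$ (the derivations of $L$ with symbol tangent to $C$, i.e.\ the image of $D\ell$ under $DI$), the condition $I^\ast\beta_t=0$ forces $\Omega_t(\square_t|_C,\delta)=0$ for all $\delta\in DL_C$. Thus $\square_t|_C\in (DL_C)^{\perp_{\Omega_t}}$, and since $C$ is coisotropic in $(M,\Omega_t)$, Prop.~\ref{prop:coiso-atiyah} gives $(DL_C)^{\perp_{\Omega_t}}\subseteq DL_C$; hence $\square_t|_C\in\Gamma(DL_C)$ is tangent to $C$. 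Because $C$ is compact and $\square_t$ is tangent to $C$, the flow of $\square_t$ is defined for all $t\in[0,1]$ along $C$, and by continuity on a possibly smaller neighbourhood $\mathcal{U}$ of $C$. Integrating produces the path of LB morphisms $\Psi_t\colon L|_{\mathcal{U}}\to L$ covering embeddings $\psi_t$, with $\Psi_0=\operatorname{id}_{\mathcal{U}}$ and $\Psi_t^\ast\Omega_t=\Omega_0|_{\mathcal{U}}$ by the computation above; moreover $\psi_t$ preserves $C$.

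Finally, since $\psi_t$ preserves $C$, the restriction $\Psi_t|_C\colon\ell\to\ell$ is an isotopy of LB automorphisms covering $\psi_t|_C\colon C\to C$, and $I\circ\Psi_t|_C=\Psi_t\circ I$ gives $\Psi_t|_C^\ast\circ I^\ast=I^\ast\circ\Psi_t^\ast$. Using the hypothesis $I^\ast\Omega_t=I^\ast\Omega_0$, we obtain
\[
\Psi_t|_C^\ast(I^\ast\Omega_0)=\Psi_t|_C^\ast(I^\ast\Omega_t)=I^\ast(\Psi_t^\ast\Omega_t)=I^\ast\Omega_0,
\]
which is the asserted ``moreover'' statement. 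I expect the main obstacle to be the tangency step: the entire purpose of the extra hypothesis~\eqref{eq:prop:coisotropic_embedding:1} is to produce a primitive $\beta_t$ with $I^\ast\beta_t=0$, and it is precisely coisotropy (via Prop.~\ref{prop:coiso-atiyah}) that upgrades this to tangency of $\square_t$, and hence to the existence of the flow on a whole neighbourhood of $C$ for all $t\in[0,1]$; the smoothness in $t$ of the two primitives, together with the global-to-relative correction by $d_D\widetilde{\lambda}_t$, is the other point that requires care.
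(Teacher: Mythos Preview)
Your argument is correct and follows the same Moser-type scheme as the paper, with the tangency step via $(D\ell)^{\perp_{\Omega_t}}\subset D\ell$ from Prop.~\ref{prop:coiso-atiyah} being exactly the paper's key point. The one place where you work harder than necessary is the construction of the primitive $\beta_t$ with $I^\ast\beta_t=0$. The paper simply takes $\beta_t:=\iota_{\mathbbm{1}}\dt{\Omega}_t$: since $d_D\dt{\Omega}_t=0$, the contracting-homotopy identity $d_D\iota_{\mathbbm{1}}+\iota_{\mathbbm{1}}d_D=\operatorname{id}$ from Rem.~\ref{rem:contracting-homotopy} gives $d_D\beta_t=\dt{\Omega}_t$ immediately, and because the identity derivation restricts along $I$ (i.e.\ $\mathbbm{1}|_C\in\Gamma(D\ell)$), one has $I^\ast(\iota_{\mathbbm{1}}\dt{\Omega}_t)=\iota_{\mathbbm{1}}(I^\ast\dt{\Omega}_t)=0$ directly from the hypothesis~\eqref{eq:prop:coisotropic_embedding:1}. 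Your two-step relative Poincar\'e argument (find $\alpha_t$, then correct by $d_D\widetilde{\lambda}_t$) is valid but superfluous here: the canonical contracting homotopy of the der-complex is already compatible with restriction to submanifolds, which is one of the pleasant features of this formalism and which also handles the smooth $t$-dependence for free. Everything else---the flow argument, the use of compactness to obtain $\mathcal{U}$, and the derivation of the ``moreover'' clause---matches the paper.
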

	\begin{proof}
		Set $\square_t=-\Omega_t^\sharp(\iota_{\mathbbm{1}}\dt\Omega_t)\in\Gamma(DL)$. 
		We claim that $\square_t|_{C}\in\Gamma(D\ell)$, or in other words that $\sigma(\square_t)$ is tangent to $C$. To see this, note that by \eqref{eq:prop:coisotropic_embedding:1}, we have $I^\ast\dt\Omega_t=0$. Since $\mathbbm{1}|_C\in\Gamma(D\ell)$, this implies that for all $\Delta\in\Gamma(D\ell)$,
		\begin{equation*}
			\big.\big(\iota_{\mathbbm{1}}\dt\Omega_t\big)\big|_{C}(\Delta)=(I^\ast\dt\Omega_t)(\mathbbm{1}|_{C},\Delta)=0.
		\end{equation*}
		This shows that
		\begin{equation}\label{eq:along}
			\big.\big(\iota_{\mathbbm{1}}\dt\Omega_t\big)\big|_{C}\in\Gamma((D\ell)^\circ).
		\end{equation}
		Since $C\subseteq (M,\Omega_t)$ is a regular coisotropic submanifold for all $0\leq t\leq 1$, we know by Prop.~\ref{prop:coiso-atiyah} that $(D\ell)^{\perp_{\Omega_t}}\subset D\ell$. Here $(D\ell)^{\perp_{\Omega_t}}$ denotes the $\Omega_t$-orthogonal of $D\ell$ inside $DL$.
		Along with \eqref{eq:along}, this implies that
		\begin{equation*}
			\square_t|_C=\big.\Omega_t^\sharp(-\iota_{\mathbbm{1}}\dt\Omega_t)\big|_{C}\in\Gamma\big(\Omega_t^\sharp((D\ell)^\circ)\big)=\Gamma\big((D\ell)^{\perp_{\Omega_t}}\big)\subset\Gamma(D\ell).
		\end{equation*}
		This confirms that $\sigma(\square_t)$ is tangent to $C$. Since $C$ is compact, there exists a neighborhood $\mathcal{U}\subset M$ of $C$ on which the flow $\psi_t$ of $\sigma(\square_t)$ is defined for all times $0\leq t\leq 1$. Hence, the isotopy of line bundle automorphisms $\Psi_t\colon L\to L$ integrating $\square_t\in\Gamma(DL)$ is defined on $L|_{\mathcal{U}}$.

		The fact that $\Psi_t$ satisfies the equation \eqref{eq:prop:coisotropic_embedding:2} follows from the computation
		\begin{equation*}
			\frac{d}{dt}\Psi_t^\ast\Omega_t=\Psi_t^\ast\left(\mathcal{L}_{\square_t}\Omega_t+\dt{\Omega}_t\right)=\Psi_t^\ast d_D\left(\iota_{\square_t}\Omega_t+\iota_{\mathbbm{1}}\dt{\Omega}_t\right)=0.
		\end{equation*}
		Here we used that $\iota_{\mathbbm{1}}$ is a contracting homotopy for the der-complex, see Rem.~\ref{rem:contracting-homotopy}. At last, since $\sigma(\square_t)$ is tangent to $C$, it is clear that $\Psi_t$ induces an isotopy of LB automorphisms $\Psi_t|_C\colon\ell\to\ell$ covering $\psi_t|_C\colon C\to C$. This isotopy automatically satisfies $\Psi_t|_C^\ast(I^\ast\Omega_0)=I^\ast\Omega_0$, since $\Psi_t^\ast\Omega_t=\Omega_0|_{\mathcal{U}}$ and $I^\ast\Omega_t=I^\ast\Omega_0$. This finishes the proof.
	\end{proof}

	We can now prove Thm.~\ref{thm:main}. In the following, $C\subset(M,\xi)$ is a compact regular coisotropic submanifold with pre-contact form $\theta_C\in\Omega^{1}(C,\ell)$, pre-symplectic Atiyah form $\varpi\in\Omega^{2}_{D}(\ell)$ and characteristic foliation $\mathcal{F}$. To prove Thm.~\ref{thm:main}, we will work in the local model $(U,\ker\gamma_G)$ for $(M,\xi)$ around $C$. Recall from \S\ref{sec:one} that the construction of this model involves a choice of complement $G$ to $T\mathcal{F}$.  The contact structure $\ker\gamma_G$ lives on a neighborhood $U$ of the zero section in the bundle $p:T^{*}\mathcal{F}\otimes\ell\rightarrow C$, and $\gamma_G$ is a $(p^{*}\ell)$-valued contact form. Working in $(U,\ker\gamma_G)$ instead of $(M,\xi)$, the setup is as follows.
	\begin{itemize}
		\item The smooth path $C_t$ in $\text{Def}_{\mathcal{F}}(C)$ is identified with a smooth path of coisotropic sections $\eta_t\in\text{Def}^{U}_{\mathcal{F}}(C)$ with $\eta_0=0$.
		\item By Lemma~\ref{lem:coiso-in-model}, the path $\eta_t$ gives rise to a path of pre-contact forms
		\begin{equation*}
			\theta_t:=\theta_C+\text{pr}_{T\mathcal{F}}^{*}\eta_t\in\Omega^{1}(C,\ell).
		\end{equation*}
		Denote by $\mathcal{F}_t$ the characteristic foliation of the pre-contact structure $\ker\theta_t$. 
		\item By Def.~\ref{def:smoothpath}, we have an isotopy $\underline{\varphi}_t$ of foliated diffeomorphisms
		\[
		\underline{\varphi}_t:(C,\mathcal{F})\rightarrow (C,\mathcal{F}_{t}).
		\]
		\item Denote by $\varpi_t\in\Omega^{2}_{D}(\ell)$ the pre-symplectic Atiyah form corresponding to $\theta_t$.
	\end{itemize}
	\noindent
	Every $\eta\in\Gamma(T^\ast\mathcal{F}\otimes\ell)$ determines a fiber-preserving diffeomorphism
	\begin{equation*}
		\tau_{\eta}\colon T^\ast\mathcal{F}\otimes\ell\overset{\sim}{\longrightarrow}T^\ast\mathcal{F}\otimes\ell,\ \alpha_x\longmapsto\alpha_x+\eta_x,
	\end{equation*}
	and a LB-automorphism $T_\eta\colon p^\ast\ell\longrightarrow p^\ast\ell$ covering $\tau_\eta$, given by 
	\begin{equation*}
		T_\eta\colon p^\ast\ell\overset{\sim}{\longrightarrow}p^\ast\ell,\ (\alpha_x,\lambda_x)\longmapsto(\alpha_x+\eta_x,\lambda_x).
	\end{equation*}
	By the proof of Lemma~\ref{lem:coiso-in-model}, the pre-symplectic Atiyah forms $\varpi_t\in\Omega^{2}_{D}(\ell)$ are given by
	\begin{equation}\label{eq:varpi-I}
		\varpi_t=I^{*}\left(T_{\eta_t}^{*}\Omega_G\right),
	\end{equation}
	where $\Omega_G\in\Omega^{2}_{D}(p^{*}\ell)$ is the symplectic Atiyah form corresponding to the contact form $\gamma_G$ and $I:\ell\hookrightarrow p^{*}\ell$ is the inclusion map. We now proceed with the proof of Thm.~\ref{thm:main}.

	\begin{proof}[Proof of Thm.~\ref{thm:main}]
		By the implication \ref{enumitem:prop:isotopy:1}$\Longrightarrow$\ref{enumitem:prop:isotopy:2} in Prop.~\ref{prop:isotopy}, there exists an isotopy of LB automorphisms $\Phi_t\colon\ell\overset{\sim}{\to}\ell$ covering an isotopy of diffeomorphisms $\varphi_t\colon C\overset{\sim}{\to}C$, such that
		\begin{equation}
			\label{eq:proof:thm:main:1}
			\Phi_t^\ast\varpi_t=\varpi_0,\ \text{for all}\ t.
		\end{equation}
		Since $C$ is compact, we can assume that $(d\varphi_t)^{-1}(T\mathcal{F})$ is transverse to $G$, i.e.
		\[
		TC=G\oplus(d\varphi_t)^{-1}(T\mathcal{F}),
		\]
		by restricting to small enough times $t$.
		Hence, one can build an isotopy of VB automorphisms $\widetilde\varphi_t\colon T^\ast\mathcal{F}\otimes\ell\overset{\sim}{\longrightarrow}T^\ast\mathcal{F}\otimes\ell$ covering the isotopy of diffeomorphisms $\varphi_t\colon C\overset{\sim}{\longrightarrow} C$, by setting
		\begin{equation*}
			\widetilde\varphi_t\colon T^\ast\mathcal{F}\otimes\ell\overset{\sim}{\longrightarrow}T^\ast\mathcal{F}\otimes\ell,\ \alpha_x\longmapsto\alpha_x\circ\operatorname{pr}_{T_x\mathcal{F}}\circ(d_x\varphi_t)^{-1}|_{T_{\varphi_t(x)}\mathcal{F}}.
		\end{equation*}
		Further, one can also build an isotopy of LB automorphisms $\widetilde\Phi_t\colon p^\ast\ell\overset{\sim}{\longrightarrow}p^\ast\ell$, covering the isotopy of VB automorphisms $\widetilde\varphi_t\colon T^\ast\mathcal{F}\otimes\ell\overset{\sim}{\longrightarrow}T^\ast\mathcal{F}\otimes\ell$, as follows
		\begin{equation*}
			\widetilde\Phi_t\colon p^\ast\ell\overset{\sim}{\longrightarrow}p^\ast\ell,\ (\alpha_x,\lambda_x)\longmapsto(\widetilde\varphi_t(\alpha_x),\Phi_t(\lambda_x)).
		\end{equation*}
		By construction, the following diagram of VB morphisms commutes
		\begin{equation}
			\label{eq:proof:thm:main:3}
			\begin{tikzcd}
				\ell\arrow[rr, "I"]\arrow[d, swap, "\Phi_t"]&&p^\ast\ell\arrow[d, "\widetilde\Phi_t"]\\
				\ell\arrow[rr, swap, "I"]&&p^\ast\ell
			\end{tikzcd}.
		\end{equation}
		Next, setting $\Psi_t:=T_{\eta_t}\circ\widetilde{\Phi}_t$ and $\psi_t:=\tau_{\eta_t}\circ\widetilde{\varphi}_t$, we get an isotopy of LB automorphisms $\Psi_t\colon p^\ast\ell\overset{\sim}{\to} p^\ast\ell$ covering the isotopy of diffeomorphisms $\psi_t\colon T^\ast\mathcal{F}\otimes\ell\overset{\sim}{\longrightarrow}T^\ast\mathcal{F}\otimes\ell$, such that the following diagram of VB automorphisms commutes
		\begin{equation}
			\label{eq:proof:thm:main:4}
			\begin{tikzcd}
				p^\ast\ell\arrow[rr, equal]\arrow[d, swap, "\widetilde\Phi_t"]&&p^\ast\ell\arrow[d, "\Psi_t"]\\
				p^\ast\ell\arrow[rr, swap, "T_{\eta_t}"]&&p^\ast\ell
			\end{tikzcd}.
		\end{equation}
		\noindent
		We now claim that
		\begin{equation}\label{eq:ddt}
			\frac{d}{dt}\left(I^\ast(\Psi_t^\ast\Omega_G)\right)=0.
		\end{equation}
		To see why, note that by the commutative diagrams~\eqref{eq:proof:thm:main:3} and~\eqref{eq:proof:thm:main:4}, we have
		\[
		I^\ast(\Psi_t^\ast\Omega_G)=\Phi_t^{*}\left(I^{*}\left(T_{\eta_t}^{*}\Omega_G\right)\right)=\Phi_t^{*}\varpi_t,
		\]
		using \eqref{eq:varpi-I} in the last equality. Hence, the equation \eqref{eq:ddt} immediately follows from \eqref{eq:proof:thm:main:1}.
		
		We are now in a position where we can apply Prop.~\ref{prop:coisotropic_embedding} to the smooth path of symplectic Atiyah forms $\Psi_t^\ast\Omega_G\in\Omega_D^2(p^\ast\ell)$. Note indeed that $C$ is regular coisotropic with respect to $\Psi_t^\ast\Omega_G$, because $\text{graph}(\eta_t)$ is regular coisotropic with respect to $\Omega_G$ and the diffeomorphism $\psi_t$ underlying $\Psi_t$ takes $C$ to $\text{graph}(\eta_t)$. Hence,
		for a suitable neighborhood $\mathcal{U}$ of $C$ in $T^\ast\mathcal{F}\otimes\ell$ and small enough $t$, there exists a path of LB morphisms $\widetilde\Psi_t\colon (p^\ast\ell)|_\mathcal{U}\to p^\ast\ell$, covering a path of embeddings $\widetilde\psi_t\colon\mathcal{U}\to T^\ast\mathcal{F}\otimes\ell$, such that
		\begin{equation}
			\label{eq:proof:thm:main:5}
			\widetilde\Psi_0=\operatorname{id}_{\mathcal{U}},\quad\widetilde{\psi}_t(C)=C\quad\text{and}\quad\widetilde\Psi_t^\ast\left(\Psi_t^\ast\Omega_G\right)=\Omega_G|_{\mathcal{U}}.
		\end{equation}
		This way, we obtain an isotopy of contactomorphisms $\psi_t\circ\widetilde\psi_t$ of the local model $(U,\ker\gamma_G)$, which are defined locally around $C$ and satisfy $\text{graph}(\eta_t)=(\psi_t\circ\widetilde\psi_t)(C)$ for all $t$. 
	\end{proof}

	\color{black}

	\section{Application to the classical coisotropic deformation problem}\label{sec:five}
	
	Thm.~\ref{thm:main} is a rigidity result for a restricted version of the coisotropic deformation problem, which only allows deformations inside the class $\text{Def}_{\mathcal{F}}(C)$. Combining this result with a well-known rigidity theorem for foliations, we obtain a rigidity result for the classical coisotropic deformation problem, which allows \emph{all} coisotropic deformations of $C$.
	
	The rigidity result in foliation theory we are referring to is due to Hamilton \cite{hamilton} and Epstein-Rosenberg \cite{epstein}. It concerns compact manifolds $C$ endowed with a Hausdorff foliation $\mathcal{F}$, meaning that the leaf space $C/\mathcal{F}$ is Hausdorff when endowed with the quotient topology. If $C$ is connected, then there exists a generic leaf $L$ such that all leaves in a saturated dense open subset are diffeomorphic to $L$. Hamilton and Epstein-Rosenberg showed that, if the generic leaf satisfies $H^{1}(L)=0$, then the foliation $\mathcal{F}$ is rigid, i.e. any foliation $\mathcal{F'}$ on $C$ close enough to $\mathcal{F}$ is conjugate to $\mathcal{F}$ by a diffeomorphism. We will use a smooth $1$-parameter version of this rigidity statement due to Del Hoyo-Fernandes \cite{rui}, which concerns smooth paths of foliations $\mathcal{F}_{t}$ deforming $\mathcal{F}$ rather than foliations $\mathcal{F'}$ close to $\mathcal{F}$.
	
	\begin{cor}\label{cor:hamilton}
		Let $C\subset(M,\xi)$ be a compact, connected regular coisotropic submanifold with characteristic foliation $\mathcal{F}$. Assume that $\mathcal{F}$ is Hausdorff and that its generic leaf $L$ satisfies $H^{1}(L)=0$. Then $C$ is rigid, i.e. if $C_t$ is a smooth path of coisotropic submanifolds with $C_0=C$, then there is an isotopy $\psi_t$ of locally defined contactomorphisms such that $C_t=\psi_t(C)$ for small enough $t$.
	\end{cor}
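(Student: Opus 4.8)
The plan is to reduce the statement to the Main Theorem (Thm.~\ref{thm:main}) by showing that, under the stated hypotheses, an \emph{arbitrary} smooth path of coisotropic deformations automatically stays inside $\text{Def}_{\mathcal{F}}(C)$ and constitutes a smooth path there in the sense of Def.~\ref{def:smoothness}. First I would pass to the contact thickening $(U,\ker\gamma_G)$ furnished by Thm.~\ref{thm:normalform}, identifying the path $C_t$ with a smooth path of coisotropic sections $\eta_t\in\Gamma(U)$, $\eta_0=0$. By Lemma~\ref{lem:coiso-in-model}, this produces a smooth path of pre-contact forms $\theta_t:=\theta_C+\operatorname{pr}_{T\mathcal{F}}^{*}\eta_t$ on the \emph{fixed} manifold $C$, whose characteristic foliations $\mathcal{F}_t$ form a smooth path of foliations with $\mathcal{F}_0=\mathcal{F}$; smoothness in Heitsch's sense follows from Thm.~\ref{thm:first-order}\ref{enumitem:thm:first-order:2}, which, for small $t$, writes $T\mathcal{F}_t=\text{graph}(\zeta_t)$ for a smooth path $\zeta_t\in\Omega^1(\mathcal{F},N\mathcal{F})$.

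The crux of the argument is the foliation rigidity theorem. Since $C$ is compact and connected and $\mathcal{F}=\mathcal{F}_0$ is Hausdorff with generic leaf $L$ satisfying $H^1(L)=0$, the smooth one-parameter stability result of Del Hoyo--Fernandes \cite{rui} (a parametric version of the rigidity theorems of Hamilton \cite{hamilton} and Epstein--Rosenberg \cite{epstein}) applies to the path $\mathcal{F}_t$. It yields, for small enough $t$, an isotopy of diffeomorphisms $\phi_t\colon C\to C$ with $\phi_0=\operatorname{id}_C$ such that $\phi_t$ carries $\mathcal{F}$ to $\mathcal{F}_t$. In particular, each $\mathcal{F}_t$ is diffeomorphic to $\mathcal{F}$, so every $C_t=\text{graph}(\eta_t)$ lies in $\text{Def}_{\mathcal{F}}(C)$.

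Finally, I would upgrade this to a \emph{smooth path} in $\text{Def}_{\mathcal{F}}(C)$. Recall the fiber-preserving diffeomorphism $\tau_{\eta_t}\colon C\to\text{graph}(\eta_t)$ with inverse $p|_{\text{graph}(\eta_t)}$ from the proof of Lemma~\ref{lem:coiso-in-model}. Setting $\Phi_t:=\tau_{\eta_t}\circ\phi_t$ gives a smooth path of embeddings $C\hookrightarrow U$ with $\Phi_t(C)=\text{graph}(\eta_t)$ and $p\circ\Phi_t=\phi_t$ a foliated diffeomorphism $(C,\mathcal{F})\to(C,\mathcal{F}_{\eta_t})$; by Def.~\ref{def:smoothpath} this exhibits $\eta_t$ as a smooth path in $\text{Def}^U_{\mathcal{F}}(C)$, hence $C_t$ as a smooth path in $\text{Def}_{\mathcal{F}}(C)$ via Lemma~\ref{lem:diffeos}. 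The conclusion then follows directly from Thm.~\ref{thm:main}. The one genuine obstacle is invoking the foliation rigidity theorem in its correct smooth, parametric form: what is needed is an isotopy \emph{generating} the path $\mathcal{F}_t$ rather than the mere conjugacy of two nearby foliations, which is precisely what \cite{rui} supplies; matching its output to the smoothness conventions of Def.~\ref{def:smoothness} is then routine.
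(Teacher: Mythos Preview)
Your proposal is correct and follows essentially the same route as the paper: pass to the contact thickening, extract the smooth path of characteristic foliations $\mathcal{F}_t$ on $C$, invoke the parametric foliation-rigidity result of Del Hoyo--Fernandes \cite{rui} to obtain an isotopy $\phi_t$ conjugating $\mathcal{F}$ to $\mathcal{F}_t$, and then apply Thm.~\ref{thm:main}. You are somewhat more explicit than the paper in verifying the smoothness condition of Def.~\ref{def:smoothpath} via $\Phi_t=\tau_{\eta_t}\circ\phi_t$ (note that $\tau_{\eta_t}$ appears in the proof of Lemma~\ref{lem:diffeos} rather than Lemma~\ref{lem:coiso-in-model}), but this is the same argument.
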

	\begin{proof}
		By Thm.~\ref{thm:main}, we only have to show that $C_t$ is a smooth path in $\text{Def}_{\mathcal{F}}(C)$ for small enough $t$. Working in the local model $(U,\ker\gamma_G)$ for $(M,\xi)$ around $C$, the smooth path $C_t$ is identified with a smooth path of regular coisotropic submanifolds $\text{graph}(\eta_t)$, for sections $\eta_t\in\Gamma(U)$ with $\eta_0=0$. By Lemma~\ref{lem:diffeos}, we only need to check whether there exists a smooth path of foliated diffeomorphisms
		\[
		\varphi_t:(C,\mathcal{F})\overset{\sim}{\rightarrow}\left(C,\mathcal{F}_{\eta_t}\right),
		\]
		where $\mathcal{F}_{\eta_t}$ is the characteristic foliation of the pre-contact structure $\ker\big(\theta_C+\operatorname{pr}_{T\mathcal{F}}^{*}\eta\big)$. The assumptions on $\mathcal{F}$ ensure that such diffeomorphisms $\varphi_t$ exist for small enough $t$, by \cite{rui}.
	\end{proof}
	
	\begin{ex}
		Consider the manifold $C:=S^{2}\times\mathbb{T}^{3}$ with hyperplane distribution $\xi_C$ given at points $(p,q)\in S^{2}\times\mathbb{T}^{3}$ by
		\[
		(\xi_C)_{(p,q)}=T_{p}S^{2}\oplus\mathbb{R}\left(\partial_{\theta_1}\right)_{q}\oplus\mathbb{R}\left(\cos\theta_1\partial_{\theta_2}-\sin\theta_1\partial_{\theta_3}\right)_{q}.
		\]
		Note that $\xi_C$ is the kernel of the nowhere vanishing one-form
		\[
		\theta_C=\sin\theta_1 d\theta_2+\cos\theta_1 d\theta_3.
		\]
		We claim that $\xi_C$ is a pre-contact distribution with characteristic foliation given by the fibers of the bundle $S^{2}\times\mathbb{T}^{3}\rightarrow\mathbb{T}^{3}$. To see this, we trivialize the quotient line bundle $\ell=TC/\xi_C$ via $\theta_C$, so that the curvature form $\omega\in\Gamma(\wedge^{2}\xi_C^{*}\otimes \ell)$ is identified with $-d\theta_C|_{\xi_C\times\xi_C}$. Because
		\begin{align*}
			\theta_C\wedge d\theta_C&=\left(\sin\theta_1 d\theta_2+\cos\theta_1 d\theta_3\right)\wedge\left(\cos\theta_1 d\theta_1\wedge d\theta_2-\sin\theta_1 d\theta_1\wedge d\theta_3\right)\\
			&=d\theta_1\wedge d\theta_2\wedge d\theta_3,
		\end{align*}
		it follows that the curvature form has constant rank. Hence $\xi_C$ is a pre-contact distribution, and its characteristic foliation $\mathcal{F}$ is given by
		\begin{align*}
			T_{(p,q)}\mathcal{F}&=\ker\left(d\theta_C|_{\xi_C\times\xi_C}\right)_{(p,q)}\\
			&=\left\{v\in (\xi_C)_{(p,q)}:\iota_v(\theta_C\wedge d\theta_C)=0\right\}\\
			&=T_{p}S^{2}.
		\end{align*}
		A choice of complement $G$ to $T\mathcal{F}$ allows one to construct the contact thickening $(U,\ker\gamma_G)$ of the pre-contact manifold $(C,\xi_C)$, see \S\ref{sec:one}. Moreover, $(C,\xi_C)$ embeds as a regular coisotropic submanifold into the contact manifold $(U,\ker\gamma_G)$. It follows from Cor.~\ref{cor:hamilton} that $C$ is rigid, when deformed as a coisotropic submanifold of $(U,\ker\gamma_G)$.
	\end{ex}

	\color{black}
	
	\appendix
	
	\section{The Atiyah algebroid of a line bundle}
	
	We review some background material about the Atiyah algebroid of a line bundle and the corresponding der-complex. We also recall that pre-contact structures can be viewed as pre-symplectic forms on this algebroid, as shown in \cite{luca}.

	\begin{defi}
		Let $L\rightarrow M$ be a line bundle. An $\mathbb{R}$-linear map $\square:\Gamma(L)\rightarrow\Gamma(L)$ is called a \textbf{derivation} of $L$ if there exists some $X\in\mathfrak{X}(M)$ such that 
		\[
		\square(f\lambda)=X(f)\lambda+f\square(\lambda),\hspace{1cm}\forall \lambda\in\Gamma(L), f\in C^{\infty}(M).
		\]
		The vector field $X$ is unique; it is called the \textbf{symbol} of $\square$ and will be denoted by $\sigma(\square)$.
	\end{defi}
	
	The space of derivations of $L$ is the module of sections of a vector bundle $DL\rightarrow M$. The fiber $(DL)_{x}$ over a point $x\in M$ consists of the \textbf{derivations of $\mathbf{L}$ at $\mathbf{x}$}, i.e. $\mathbb{R}$-linear maps $\delta:\Gamma(L)\rightarrow L_{x}$ for which there exists a (necessarily unique) tangent vector $v\in T_{x}M$ such that $\delta(f\lambda)=v(f)\lambda_{x}+f(x)\delta(\lambda)$ for $f\in C^{\infty}(M)$ and $\lambda\in\Gamma(L)$. Here $v$ is also called the symbol of $\delta$, again denoted by $\sigma(\delta)$. The vector bundle $DL$ satisfies $\text{rk}(DL)=\dim(M)+1$. 
	
	\begin{defi}
		The vector bundle $DL\rightarrow M$ is a transitive Lie algebroid, called the \textbf{Atiyah algebroid} of $L$. Its Lie bracket is the commutator of derivations $[-,-]$, and the anchor map is given by the symbol $\sigma:DL\rightarrow TM$.
	\end{defi}
	
	The Lie algebroid $DL$ has a tautological representation $\nabla$ on $L$, given by
	\[
	\nabla_{\square}\lambda=\square\lambda,\hspace{1cm}\square\in\Gamma(DL),\lambda\in\Gamma(L).
	\] 
	Accordingly, we obtain the \textbf{der-complex} $\Omega^{\bullet}_{D}(L):=\Gamma(\wedge^{\bullet}(DL)^{*}\otimes L)$ of $L$, consisting of $L$-valued \textbf{Atiyah forms}. One has the usual Cartan calculus involving the following operations.
	\begin{enumerate}
		\item The differential $d_{D}:\Omega^{k}_{D}(L)\rightarrow\Omega^{k+1}_{D}(L)$ is defined by the usual Koszul formula
		\begin{align*}
			d_{D}\eta(\triangle_1,\ldots,\triangle_{k+1})&=\sum_{i=1}^{k+1}(-1)^{i+1}\triangle_{i}\big(\eta(\triangle_1,\ldots,\triangle_{i-1},\widehat{\triangle_{i}},\triangle_{i+1},\ldots,\triangle_{k+1})\big)\nonumber\\
			&\hspace{0.5cm}+\sum_{i<j}(-1)^{i+j}\eta\big([\triangle_{i},\triangle_{j}],\triangle_{1},\ldots,\widehat{\triangle_{i}},\ldots,\widehat{\triangle_{j}},\ldots,\triangle_{k+1}\big).
		\end{align*}  
		
		\item For any $\square\in\Gamma(DL)$, the contraction $\iota_{\square}:\Omega^{k}_{D}(L)\rightarrow\Omega^{k-1}_{D}(L)$ is given by
		\[
		(\iota_{\square}\eta)(\triangle_1,\ldots,\triangle_{k-1})=\eta(\square,\triangle_1,\ldots,\triangle_{k-1}).
		\]
		\item For any $\square\in\Gamma(DL)$, the Lie derivative $\mathcal{L}_{\square}:\Omega^{k}_{D}(L)\rightarrow\Omega^{k}_{D}(L)$ is given by
		\begin{equation}\label{eq:lie-der}
			(\mathcal{L}_{\square}\eta)(\triangle_1,\ldots,\triangle_k)=\square\big(\eta(\triangle_1,\ldots,\triangle_k)\big)-\sum_{i=1}^{k}\eta(\triangle_1,\ldots,[\square,\triangle_i],\ldots,\triangle_k).
		\end{equation}
	\end{enumerate}
	These satisfy the usual identities, for $\square,\triangle\in\Gamma(DL)$:
	\begin{equation}\label{eq:cartan}
		\begin{cases}
			d_{D}\iota_{\square}+\iota_{\square}d_{D}=\mathcal{L}_{\square},\\
			\mathcal{L}_{\square}\iota_{\triangle}-\iota_{\triangle}\mathcal{L}_{\square}=\iota_{[\square,\triangle]},\\
			\mathcal{L}_{\square}\mathcal{L}_{\triangle}-\mathcal{L}_{\triangle}\mathcal{L}_{\square}=\mathcal{L}_{[\square,\triangle]},\\ d_{D}\mathcal{L}_{\square}-\mathcal{L}_{\square}d_{D}=0.
		\end{cases}
	\end{equation}

	\begin{remark}\label{rem:contracting-homotopy}
		\begin{enumerate}[label=\roman*)]
			\item There is a canonical isomorphism $(DL)^{*}\otimes L\cong J^{1}L$, where $J^{1}L$ is the first jet bundle of $L$. The differential 
			$
			d_{D}:\Gamma(L)\rightarrow\Gamma((DL)^{*}\otimes L)
			$
			corresponds under this isomorphism to the first jet prolongation $j^{1}:\Gamma(L)\rightarrow\Gamma(J^{1}L)$.
			\item The der-complex $\big(\Omega^{\bullet}_{D}(L),d_{D}\big)$ is acyclic. Indeed, the contraction $\iota_{\mathbbm{1}}$ by the identity derivation $\mathbbm{1}\in\Gamma(DL)$ satisfies $d_{D}\iota_{\mathbbm{1}}+\iota_{\mathbbm{1}}d_{D}=\text{id}$.
		\end{enumerate}	
	\end{remark}
	
	\begin{defi}\label{def:pres-atiyah}
		Let $\varpi\in\Omega^{2}_{D}(L)$ be an Atiyah form such that $\iota_{\mathbbm{1}}\varpi$ is nowhere zero. Then $\varpi$ is called \textbf{pre-symplectic} (resp. symplectic) if $\varpi$ is $d_{D}$-closed and the bundle morphism $\varpi^{\flat}:DL\rightarrow(DL)^{*}\otimes L$ has constant rank (resp. is an isomorphism).
	\end{defi}
	
	A pre-symplectic Atiyah form $\varpi\in\Omega^{2}_{D}(L)$ defines a Lie subalgebroid
	$
	K:=\ker\varpi^{\flat}\subset DL.
	$
	
	\begin{prop}\cite{luca}\label{prop:relation-luca}
		Let $L\rightarrow M$ be a line bundle.
		\begin{enumerate}
			\item The relation
			\[
			\sigma^{*}\theta=\iota_{\mathbbm{1}}\varpi
			\]
			gives a one-to-one correspondence between $L$-valued pre-contact forms $\theta\in\Omega^{1}(M,L)$ and pre-symplectic Atiyah forms $\varpi\in\Omega^{2}_{D}(L)$.
			\item If $\theta$ and $\varpi$ correspond to each other, then the symbol map $\sigma:DL\rightarrow TM$ induces a Lie algebroid isomorphism
			\[
			K\subset DL\overset{\sim}{\longrightarrow}T\mathcal{F}\subset TM,
			\]
			where $T\mathcal{F}$ is the characteristic distribution of the pre-contact structure $\theta$.
		\end{enumerate}
	\end{prop}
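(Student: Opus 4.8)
The plan is to exhibit the correspondence by the explicit maps $\theta\mapsto d_D(\sigma^\ast\theta)$ in one direction and $\varpi\mapsto\theta$ (the unique solution of $\sigma^\ast\theta=\iota_{\mathbbm{1}}\varpi$) in the other, and then verify they are mutually inverse using the Cartan calculus of \eqref{eq:cartan}. The single observation that drives everything is that the identity derivation $\mathbbm{1}$ is central in $DL$ and acts as the identity on $\Gamma(L)$, so that $\mathcal{L}_{\mathbbm{1}}=\operatorname{id}$ on all of $\Omega^\bullet_D(L)$. Combined with $\iota_{\mathbbm{1}}\sigma^\ast\theta=\theta(\sigma(\mathbbm{1}))=0$ and $\iota_{\mathbbm{1}}\iota_{\mathbbm{1}}\varpi=0$, this lets me pass freely between $\theta$, $\sigma^\ast\theta$ and $\varpi$.

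For the forward map I set $\varpi:=d_D(\sigma^\ast\theta)$, which is automatically $d_D$-closed, and the formula $\iota_{\mathbbm{1}}d_D+d_D\iota_{\mathbbm{1}}=\mathcal{L}_{\mathbbm{1}}$ gives $\iota_{\mathbbm{1}}\varpi=\mathcal{L}_{\mathbbm{1}}(\sigma^\ast\theta)-d_D\iota_{\mathbbm{1}}(\sigma^\ast\theta)=\sigma^\ast\theta$, which is nowhere zero since $\theta$ is and $\sigma$ is surjective. For the backward map, since $\iota_{\mathbbm{1}}(\iota_{\mathbbm{1}}\varpi)=0$ the form $\iota_{\mathbbm{1}}\varpi$ annihilates $\ker\sigma=\mathbb{R}\cdot\mathbbm{1}$, hence lies in the image of $\sigma^\ast$ (the annihilator of $\ker\sigma$ under $(DL)^\ast\otimes L\cong J^1L$ from Rem.~\ref{rem:contracting-homotopy}); this produces a unique, nowhere-zero $\theta$ with $\sigma^\ast\theta=\iota_{\mathbbm{1}}\varpi$. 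The two constructions are inverse: one round trip is $\iota_{\mathbbm{1}}d_D(\sigma^\ast\theta)=\sigma^\ast\theta$ as above, and the other is $d_D(\sigma^\ast\theta)=d_D\iota_{\mathbbm{1}}\varpi=\mathcal{L}_{\mathbbm{1}}\varpi-\iota_{\mathbbm{1}}d_D\varpi=\varpi$, using $d_D\varpi=0$.

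The substantive step, and the one I expect to be the main obstacle, is to identify $K=\ker\varpi^\flat$ with the characteristic distribution and thereby match the two constant-rank conditions (so that $\varpi$ is pre-symplectic exactly when $\theta$ is pre-contact), which simultaneously proves part (2). I would first unwind $\varpi=d_D(\sigma^\ast\theta)$ into the Koszul expression $\varpi(\square,\triangle)=\square(\theta(\sigma\triangle))-\triangle(\theta(\sigma\square))-\theta([\sigma\square,\sigma\triangle])$. Testing $\square\in K$ against $\mathbbm{1}$ forces $\theta(\sigma\square)=0$, i.e.\ $\sigma(K)\subseteq\xi$; testing against derivations with symbol in $\xi$ then yields $\omega(\sigma\square,\,\cdot\,)=0$, so $\sigma(K)\subseteq\ker\omega^\flat=T\mathcal{F}$. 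Injectivity of $\sigma|_K$ is immediate, since $\sigma\square=0$ gives $\square=f\mathbbm{1}$ and $0=\iota_\square\varpi=f\,\sigma^\ast\theta$ forces $f=0$ because $\sigma^\ast\theta$ is nowhere zero.

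The reverse inclusion $T\mathcal{F}\subseteq\sigma(K)$ is where the argument has real content. Given $X\in\ker\omega^\flat$, I would lift it to some $\square_0$ with $\sigma\square_0=X$ and show $\iota_{\square_0}\varpi$ is a pointwise multiple of $\sigma^\ast\theta$: by the same two tests it vanishes on $\mathbbm{1}$ and on all derivations with symbol in $\xi$, exactly as $\sigma^\ast\theta$ does, and both are detected only by a single $\triangle_0$ with $\theta(\sigma\triangle_0)\neq 0$; dividing the two values (legitimate in the line bundle $L$) produces a function $f$ with $\iota_{\square_0+f\mathbbm{1}}\varpi=0$, so $\square_0+f\mathbbm{1}\in K$ lifts $X$. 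Thus $\sigma\colon K\to T\mathcal{F}$ is a fiberwise isomorphism, and since it is the restriction of the anchor it preserves brackets, giving the Lie algebroid isomorphism of part (2). Finally, $K$ is involutive as the kernel of a closed Atiyah form, and $\operatorname{rk}K=\operatorname{rk}T\mathcal{F}=\operatorname{rk}\ker\omega^\flat$ pointwise; hence $\varpi^\flat$ has locally constant rank precisely when $\ker\omega^\flat$ does, which is exactly the equivalence between $\varpi$ being pre-symplectic and $\theta$ being pre-contact, completing part (1).
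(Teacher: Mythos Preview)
The paper does not actually prove this proposition: it is stated in the Appendix with the citation \cite{luca} and no argument is given, so there is no ``paper's own proof'' to compare against.

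That said, your proof is correct and essentially the standard one. The two key mechanisms are exactly right: the contracting homotopy identity $d_D\iota_{\mathbbm{1}}+\iota_{\mathbbm{1}}d_D=\mathcal{L}_{\mathbbm{1}}=\operatorname{id}$ (which the paper records in Rem.~\ref{rem:contracting-homotopy}) gives the bijection $\theta\leftrightarrow\varpi$, and the Koszul expansion of $\varpi=d_D(\sigma^\ast\theta)$ identifies $\ker\varpi^\flat$ with $\ker\omega^\flat$. Two small remarks. First, your ``division in the line bundle $L$'' step for the reverse inclusion $T\mathcal{F}\subseteq\sigma(K)$ is fine but could be phrased more invariantly: both $(\iota_{\square_0}\varpi)_x$ and $(\sigma^\ast\theta)_x$ lie in the one-dimensional space $\operatorname{Hom}\big(D_xL/\sigma^{-1}(\xi)_x,\,L_x\big)$, and the latter is nonzero, so the scalar $f(x)$ is well-defined and smooth. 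Second, your observation that $\sigma^{-1}(\xi)$ already contains $\mathbbm{1}$ (so the two vanishing tests overlap) is implicit but worth making explicit, since it is what reduces the comparison to a single transverse direction $\triangle_0$.
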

	
	Assume we are given line bundles $L\rightarrow M$ and $L'\rightarrow M'$. A bundle map $(\varphi,\underline{\varphi}):L\rightarrow L'$ is called \textbf{regular} if $\varphi$ is fiberwise invertible. Such a map $(\varphi,\underline{\varphi})$ induces a morphism of Lie algebroids $D\varphi:DL\rightarrow DL'$ covering $\underline{\varphi}$, which is defined as follows. First note that there is a well-defined pullback map on sections $\varphi^{*}:\Gamma(L')\rightarrow\Gamma(L)$, given by
	\[
	(\varphi^{*}\lambda')_{x}=\left(\varphi|_{L_x}\right)^{-1}\lambda'_{\underline{\varphi}(x)},\hspace{1cm}\lambda'\in\Gamma(L'), x\in M.
	\] 
	Using this, one can define $D\varphi:DL\rightarrow DL'$ by setting
	\[
	\big(D\varphi(\delta)\big)(\lambda')=\varphi\big(\delta(\varphi^{*}\lambda')\big),\hspace{1cm}\delta\in(DL)_{x}, \lambda'\in\Gamma(L').
	\]
	Furthermore, one obtains a chain map $\varphi^{*}:\big(\Omega^{\bullet}_{D}(L'), d_D\big)\rightarrow\big(\Omega^{\bullet}_{D}(L),d_D\big)$ which extends the pullback on sections $\varphi^{*}:\Gamma(L')\rightarrow\Gamma(L)$. It is explicitly given by
	\[
	\big(\varphi^{*}\eta'\big)_{x}(\delta_1,\ldots,\delta_k)=\left(\varphi|_{L_x}\right)^{-1}\left(\eta'_{\underline{\varphi}(x)}\big(D\varphi(\delta_1),\ldots,D\varphi(\delta_k)\big)\right),
	\]
	for $\eta'\in\Omega^{k}_{D}(L')$ and $\delta_1,\ldots,\delta_k\in (DL)_x$ for some $x\in M$. 
	
	\begin{ex}\label{ex:inclusion}
		Consider the particular case in which $\underline{\varphi}:N\hookrightarrow M$ is the inclusion of a submanifold, and $\varphi:L_{N}\hookrightarrow L$ is the inclusion of the restricted vector bundle $L_{N}:=L|_{N}$. Then the map $D\varphi:DL_{N}\hookrightarrow DL$ is an embedding whose image consists of those $\delta\in DL$ for which $\sigma(\delta)\in TN$. This allows us to view $DL_{N}$ as a Lie subalgebroid of $DL$. Moreover, we have a pullback map $\varphi^{*}:\Omega^{\bullet}_{D}(L)\rightarrow\Omega^{\bullet}_{D}(L_{N})$.
	\end{ex}
	
	At last, we recall the interpretation of derivations as infinitesimal bundle automorphisms. Given a line bundle $L\rightarrow M$, let $\square_t\in\Gamma(DL)$ be a smooth family of derivations. It generates a smooth family of local line bundle automorphisms $\varphi_t$, uniquely determined by 
	\begin{equation}\label{eq:flow}
		\frac{d}{dt}\varphi_t^{*}\lambda=\varphi_t^{*}\big(\square_t\lambda\big),\hspace{1cm}\forall\lambda\in\Gamma(L),
	\end{equation}
	with initial condition $\varphi_0=\text{Id}$. The automorphisms $\varphi_t$ cover the local flow $\underline{\varphi}_t$ of the time-dependent vector field $\sigma(\square_t)$. The formula \eqref{eq:flow} extends to time-dependent Atiyah forms via
	\[
	\frac{d}{dt}\varphi_t^{*}\eta_t=\varphi_t^{*}\left(\mathcal{L}_{\square_t}\eta_t+\frac{d}{dt}\eta_t\right),\hspace{0.5cm}\eta_t\in\Omega^{k}_{D}(L).
	\]

	\color{black}

	\bibliography{CoisotropicContact}

\begin{thebibliography}{10}

\bibitem{banyaga}
A.~Banyaga.
\newblock {\em The structure of classical diffeomorphism groups}, volume 400 of
  {\em Mathematics and Its Applications}.
\newblock Springer, 1997.

\bibitem{rui}
M.~Del~Hoyo and R.~L. Fernandes.
\newblock On deformations of compact foliations.
\newblock {\em Proc. Amer. Math. Soc.}, 147(10):4555--4561, 2019.

\bibitem{epstein}
D.~B.~A. Epstein and H.~Rosenberg.
\newblock Stability of compact foliations.
\newblock In {\em Geometry and Topology}, volume 597 of {\em Lecture Notes in
  Math.}, pages 151--160. Springer, 1977.

\bibitem{geiges}
H.~Geiges.
\newblock {\em An Introduction to Contact Topology}, volume 109 of {\em
  Cambridge Stud. Adv. Math.}
\newblock Cambridge University Press, 2008.

\bibitem{symplectic}
S.~Geudens.
\newblock On deformations of coisotropic submanifolds with fixed characteristic
  foliation.
\newblock {\em Lett. Math. Phys.}, 114(1):Article 5, 2024.

\bibitem{gotay}
M.~J. Gotay.
\newblock On coisotropic imbeddings of presymplectic manifolds.
\newblock {\em Proc. Amer. Math. Soc.}, 84(1):111--114, 1982.

\bibitem{hamilton}
R.~Hamilton.
\newblock Deformation theory of foliations.
\newblock {\em unpublished}.

\bibitem{heitsch}
J.~L. Heitsch.
\newblock A cohomology for foliated manifolds.
\newblock {\em Comment. Math. Helv.}, 50(1):197--218, 1975.

\bibitem{contact-thickening}
F.~Loose.
\newblock The tubular neighborhood theorem in contact geometry.
\newblock {\em Abh. Math. Semin. Univ. Hambg}, 68:129--147, 1998.

\bibitem{jacobi}
H.~V. Lê, Y.-G. Oh, A.~G. Tortorella, and L.~Vitagliano.
\newblock {Deformations of coisotropic submanifolds in Jacobi manifolds}.
\newblock {\em J. Symplectic Geom.}, 16(4):1051--1116, 2018.

\bibitem{tondeur}
P.~Tondeur.
\newblock {\em Geometry of foliations}, volume~90 of {\em Monogr. Math.}
\newblock Birkhäuser, 1997.

\bibitem{rigidity}
A.~G. Tortorella.
\newblock Rigidity of integral coisotropic submanifolds of contact manifolds.
\newblock {\em Lett. Math. Phys.}, 108(3):883--896, 2018.

\bibitem{luca}
L.~Vitagliano.
\newblock {Dirac--Jacobi Bundles}.
\newblock {\em J. Symplectic Geom.}, 16:485--561, 2018.

\end{thebibliography}
	\bibliographystyle{plain}

\end{document}